\newtheorem{theorem}{Theorem}[section]
\newtheorem{lemma}[theorem]{Lemma}
\newtheorem{proposition}[theorem]{Proposition}
\newtheorem{corollary}[theorem]{Corollary}
\newtheorem{hypothesis}[theorem]{Hypothesis}
\numberwithin{equation}{section}
\theoremstyle{remark}
\newtheorem{remark}[theorem]{Remark}
\newtheorem{example}[theorem]{Example}
\theoremstyle{definition}
\newtheorem{definition}[theorem]{Definition}
\newcommand{\Ric}{\mathop{\mathrm{Ric}}\nolimits}
\newcommand{\Ad}{\mathop{\mathrm{Ad}}\nolimits}
\newcommand{\ad}{\mathop{\mathrm{ad}}\nolimits}
\newcommand{\tr}{\mathop{\mathrm{tr}}\nolimits}
\author{Mark Gould\thanks{School of Mathematics and Physics, The
University of Queensland, St Lucia,~QLD 4072, Australia}~\thanks{Mark Gould's research is supported under Australian Research Council's Discovery Projects funding scheme (DP140101492 and~DP160101376).} \\
\small{\texttt{m.gould1@uq.edu.au}} \and Artem Pulemotov\footnotemark[1]~\thanks{Artem Pulemotov is a recipient of the Australian Research Council Discovery Early-Career Researcher Award (DE150101548).} \\
\small{\texttt{a.pulemotov@uq.edu.au}}}
\title{The prescribed Ricci curvature problem on homogeneous spaces with intermediate subgroups}
\begin{document}

\maketitle

\begin{abstract}
Consider a compact Lie group $G$ and a closed subgroup $H<G$. Suppose $\mathcal M$ is the set of $G$-invariant Riemannian metrics on the homogeneous space~$M=G/H$. We obtain a sufficient condition for the existence of $g\in\mathcal M$ and $c>0$ such that the Ricci curvature of $g$ equals $cT$ for a given~$T\in\mathcal M$. This condition is also necessary if the isotropy representation of $M$ splits into two inequivalent irreducible summands. Immediate and potential applications include new existence results for Ricci iterations.
\\
\\
\noindent
\textbf{Keywords:} Prescribed Ricci curvature, homogeneous space, generalised flag manifold
\end{abstract}

\tableofcontents

\section{Introduction}\label{sec_intro}

Consider a smooth manifold $M$ and a symmetric (0,2)-tensor field $T$ on~$M$. The prescribed Ricci curvature problem consists in finding a Riemannian metric $g$ such that
\begin{align}\label{eq_basic_PRC}
\Ric g=T,
\end{align}
where $\Ric g$ denotes the Ricci curvature of $g$. The investigation of this problem is an important segment of geometric analysis with strong ties to flows and relativity. While many mathematicians have made significant contributions to the study of~\eqref{eq_basic_PRC}, a particularly large amount of work was done by D.~DeTurck and his collaborators. The reader will find surveys in~\cite[Chapter~5]{AB87} and~\cite[Section~6.5]{TA98}. For more recent results, see~\cite{AP13,AP16,ED17a,ED17b} and references therein.

Suppose the manifold $M$ is closed and the tensor field $T$ is positive-definite. It is possible for equation~\eqref{eq_basic_PRC} to have no solutions. Moreover, in a number of settings, a metric $g$ such that
\begin{align}\label{eq_PRC}
\Ric g=cT
\end{align}
only exists for one value of $c\in\mathbb R$; see, e.g.,~\cite{RH84,AP16}. This observation suggests a change of paradigm in the study of the prescribed Ricci curvature problem. Namely, instead of trying to solve~\eqref{eq_basic_PRC}, one should search for a metric $g$ and a constant $c>0$ satisfying~\eqref{eq_PRC}. The idea of shifting focus from~\eqref{eq_basic_PRC} to~\eqref{eq_PRC} dates back to R.~Hamilton's work~\cite{RH84} and D.~DeTurck's work~\cite{DDT85}. Note that such a shift may be unreasonable on an open manifold or a manifold with non-empty boundary.

In the paper~\cite{AP16}, the second-named author initiated the investigation of equation~\eqref{eq_PRC} on homogeneous spaces. More precisely, consider a compact connected Lie group $G$ and a closed connected subgroup $H<G$. Let $M$ be the homogeneous space~$G/H$. We denote by $\mathcal M$ the set of $G$-invariant Riemannian metrics on~$M$ and assume the tensor field $T$ lies in~$\mathcal M$. The main theorem of~\cite{AP16} states that a metric $g\in\mathcal M$ and a constant $c>0$ satisfying~\eqref{eq_PRC} can be found if $H$ is a maximal connected Lie subgroup of~$G$. Further results in~\cite{AP16} address the prescribed Ricci curvature problem on $M$ in the case where the isotropy representation of $M$ splits into two inequivalent irreducible summands. The reader will find a classification of homogeneous spaces possessing this property in~\cite{WDMK08,CH12}. Several authors have studied their geometry in detail; see, e.g.,~\cite{AAIC11,MB14,APYRsubm}.

The main result of the present paper, Theorem~\ref{thm_PRC}, provides a sufficient condition for the existence of $g\in\mathcal M$ and $c>0$ satisfying~\eqref{eq_PRC} in the case where the maximality assumption on $H$ does not hold. This condition is, in fact, necessary when the isotropy representation of $M$ splits into two inequivalent irreducible summands. To describe the result further, assume that $\mathfrak g$ and $\mathfrak h$ are the Lie algebras of $G$ and~$H$. As before, we demand that $T$ lie in~$\mathcal M$. Imposing natural requirements on the Lie subalgebras of $\mathfrak g$ that contain~$\mathfrak h$, we show that the existence of $g\in\mathcal M$ and $c>0$ satisfying~\eqref{eq_PRC} is guaranteed by an array of simple inequalities for~$T$.

Theorem~\ref{thm_PRC} applies on a broad class of homogeneous spaces. For instance, its assumptions hold if $M$ is a generalised flag manifold. Previous literature provides little information concerning the solvability of~\eqref{eq_PRC} on such manifolds. However, several other aspects of their geometry have been investigated thoroughly; see the survey~\cite{AA15}.

As far as applications are concerned, Theorem~\ref{thm_PRC} leads to new existence results for Ricci iterations. More precisely, consider a sequence $(g_i)_{i=1}^\infty$ of Riemannian metrics on a smooth manifold. One calls $(g_i)_{i=1}^\infty$ a \emph{Ricci iteration} if
\begin{align}\label{eq_iter}
\Ric g_i=g_{i-1}
\end{align}
for $i\in\mathbb N\setminus\{1\}$. Introduced by Y.~Rubinstein in~\cite{YR07}, sequences satisfying~\eqref{eq_iter} have been investigated intensively in the framework of K\"ahler geometry; see the survey~\cite{YR14}. The study of such sequences on homogeneous spaces was initiated in~\cite{APYRsubm}. There are close connections between~\eqref{eq_iter} and the Ricci flow. Some of these connections are explained in~\cite[Section~6]{YR14} and~\cite[Subsection~2.2]{APYRsubm}.

In the present paper, we obtain a new existence result for Ricci iterations by exploiting one of the corollaries of Theorem~\ref{thm_PRC}. The assumptions of this result appear to be quite restrictive, and examples of homogeneous spaces to which it applies are scarce. However, we anticipate that Theorem~\ref{thm_PRC} and the underlying techniques will lead to substantial further advances in the study of Ricci iterations in the future.

It is interesting to place our analysis of~\eqref{eq_PRC} into the context of the theory of homogeneous Einstein metrics. We refer to~\cite[Chapter~7]{AB87} for an introduction to this theory and some foundational results. The surveys~\cite{MW99,YNERVS07,MW12,AA15} contain overviews of more recent work. According to~\cite[Theorem~(2.2)]{MWWZ86}, a metric $g\in\mathcal M$ satisfying the Einstein equation
\begin{align}\label{eq_Einstein_intro}
\Ric g=\lambda g
\end{align}
for some $\lambda\in\mathbb R$ exists if $H$ is a maximal connected Lie subgroup of~$G$. Whether such $g\in\mathcal M$ can be found when this assumption does not hold is a difficult question. The papers~\cite{CB04,CBMWWZ04} offer several sufficient conditions for the answer to be positive, while~\cite[\S3]{MWWZ86} discusses a situation in which the answer is negative.

One observes a number of similarities \emph{and} differences between the analytical properties of~\eqref{eq_PRC} and those of~\eqref{eq_Einstein_intro} on homogeneous spaces. As shown in~\cite{AP16}, a metric $g\in\mathcal M$ satisfies~\eqref{eq_PRC} for some $c\in\mathbb R$ if and only if it is a critical point of the scalar curvature functional $S$ on the set
\begin{align}\label{def_MT}
\mathcal M_T=\{g\in\mathcal M\,|\,\tr_gT=1\},
\end{align}
where $\tr_gT$ denotes the trace of $T$ with respect to~$g$. Under the assumptions of Theorem~\ref{thm_PRC}, $S$~has a global maximum on~$\mathcal M_T$. Correspondingly, it is well-known that $g\in\mathcal M$ satisfies~\eqref{eq_Einstein_intro} if and only if it is a critical point of $S$ on the set 
\begin{align}\label{intro_def_M1}
\mathcal M_1=\{g\in\mathcal M\,|\,M~\mbox{has volume~1 with respect to}~g\}. 
\end{align}
This fact underlies the proofs of the main results of~\cite{MWWZ86,CB04,CBMWWZ04}. However, according to~\cite[Theorem~(2.4)]{MWWZ86} and~\cite[Theorem~1.2]{CB04}, it is only in very special situations that $S$ can have a global maximum on~$\mathcal M_1$.

The paper is organised as follows. In Section~\ref{sec_prelim}, we state and prove our main result, Theorem~\ref{thm_PRC}. We also present two corollaries, one of which will be essential to our study of Ricci iterations. Section~\ref{sec_2sum} explores equation~\eqref{eq_PRC} on homogeneous spaces with two inequivalent irreducible isotropy summands. We demonstrate, by appealing to~\cite[Proposition~3.1]{AP16}, that Theorem~\ref{thm_PRC} is optimal in this setting. Section~\ref{sec_flag} discusses the application of our results on generalised flag manifolds. As a specific example, we consider the space~$G_2/U(2)$ with $U(2)$ corresponding to the long root of~$G_2$. This space has three pairwise inequivalent irreducible summands in its isotropy representation. Finally, Section~\ref{sec_iter} deals with the existence of Ricci iterations.

Most of the results of the present paper, including Theorem~\ref{thm_PRC}, are announced in~\cite{MGAP17}.

\section{The existence of metrics with prescribed Ricci curvature}\label{sec_prelim}

As in Section~\ref{sec_intro}, we consider a compact connected Lie group $G$ and a closed connected subgroup $H<G$. Assume the homogeneous space $M=G/H$ has dimension~3 or higher, i.e.,
\begin{align}\label{dimM}
\dim M=n\ge3. 
\end{align}
Choose a scalar product $Q$ on $\mathfrak g$ induced by a bi-invariant Riemannian metric on $G$. If $\mathfrak u$ and $\mathfrak v$ are subspaces of $\mathfrak g$ such that $\mathfrak u\subset\mathfrak v$, we use the notation $\mathfrak v\ominus\mathfrak u$ for the $Q$-orthogonal complement of $\mathfrak u$ in $\mathfrak v$. Define
\begin{align*}
\mathfrak m=\mathfrak g\ominus\mathfrak h.
\end{align*}
It is clear that $\mathfrak m$ is $\Ad(H)$-invariant. The representation $\Ad(H)|_{\mathfrak m}$ is equivalent to the isotropy representation of $G/H$. We standardly identify $\mathfrak m$ with the tangent space $T_HM$.

\subsection{Preliminaries}\label{subsec_nonstand_nota}

The space $\mathcal M$ of $G$-invariant Riemannian metrics on $M$ carries a natural smooth manifold structure; see, e.g.,~\cite[pages~6318--6319]{YNERVS07}. The properties of this space are discussed in~\cite[Subsection~4.1]{CB04} in great detail. In what follows, we implicitly identify $g\in\mathcal M$ with the bilinear form induced by $g$ on $\mathfrak m$ via the identification of $T_HM$ and $\mathfrak m$. The scalar curvature $S(g)$ of a metric $g\in\mathcal M$ is constant on $M$. Therefore, we may interpret $S(g)$ as the result of applying a functional $S:\mathcal M\to\mathbb R$ to $g\in\mathcal M$. Standard formulas for the scalar curvature (see, e.g.,~\cite[Corollary~7.39]{AB87}) imply that $S$ is differentiable on $\mathcal M$. Given $T\in\mathcal M$, the space $\mathcal M_T$ defined by~\eqref{def_MT} has a smooth manifold structure inherited from~$\mathcal M$.

The following result is a special case of~\cite[Lemma~2.1]{AP16}. It provides a variational interpretation of the prescribed Ricci curvature equation~\eqref{eq_PRC} on homogeneous spaces.

\begin{lemma}\label{lem_var}
Given $T\in\mathcal M$, formula~(\ref{eq_PRC}) holds for some $c\in\mathbb R$ if and only if $g$ is a critical point of the restriction of the functional $S$ to $\mathcal M_T$.
\end{lemma}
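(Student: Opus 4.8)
The plan is to exploit the standard variational characterisation of the Ricci curvature operator on a homogeneous space. First I would recall the general fact (essentially due to the structure of the scalar curvature functional on $\mathcal M$, see~\cite[Corollary~7.39]{AB87} and~\cite{AP16}) that, for $g\in\mathcal M$, the differential of $S$ at $g$ acting on a variation $h\in T_g\mathcal M$ can be written as
\begin{align*}
dS_g(h)=-\tfrac12\langle\Ric g-\tfrac12 S(g)\,g\,,\,h\rangle_g-\textrm{(lower order terms that vanish by invariance)},
\end{align*}
or, more precisely, that $dS_g(h)$ equals a fixed nonzero multiple of the ``$L^2$-type'' pairing of $h$ with a symmetric $(0,2)$-tensor built from $\Ric g$. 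The key computational input, which I would take essentially verbatim from~\cite{AP16}, is the identity expressing $dS_g$ in terms of $\Ric g$: there is a constant $\kappa\neq0$ (depending only on conventions) with $dS_g(h)=\kappa\sum_i (\Ric g)(e_i,e_i)$-type expression, so that $dS_g$ annihilates $h$ precisely when $h$ is ``$g$-orthogonal'' to $\Ric g$ in the appropriate sense.

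Next I would identify the tangent space to $\mathcal M_T$ inside $T_g\mathcal M$. Since $\mathcal M_T=\{g\in\mathcal M\mid \tr_g T=1\}$ and the map $g\mapsto \tr_g T$ is a submersion onto a neighbourhood of $1$ (its differential at $g$ is $h\mapsto -\tr_g(g^{-1}hg^{-1}T)$, which is nonzero as long as $T$ is positive-definite), $\mathcal M_T$ is a smooth hypersurface and
\begin{align*}
T_g\mathcal M_T=\ker\big(h\mapsto d(\tr_\bullet T)_g(h)\big)=\{h\in T_g\mathcal M\mid \langle T,h\rangle_g=0\},
\end{align*}
up to the usual normalisation of the pairing $\langle\cdot,\cdot\rangle_g$. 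Then $g$ is a critical point of $S|_{\mathcal M_T}$ if and only if $dS_g$ vanishes on this hyperplane, i.e.\ if and only if $dS_g$ is proportional to $d(\tr_\bullet T)_g$ as linear functionals on $T_g\mathcal M$. By the two displayed identities, this is equivalent to $\Ric g$ being proportional to $T$ as elements of $\mathcal M$, that is, $\Ric g=cT$ for some $c\in\mathbb R$; this is exactly~\eqref{eq_PRC}. For the converse direction one simply reverses this chain: if $\Ric g=cT$ then $dS_g=c\,\kappa'\,d(\tr_\bullet T)_g$ for a suitable constant, hence $dS_g$ annihilates $T_g\mathcal M_T$.

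The main obstacle, and the place where care is needed, is getting the precise form of $dS_g$ and checking that the ``lower order terms'' (the terms in the scalar curvature formula coming from the mixed structure constants and the $\mathfrak h$-part) either cancel or contribute proportionally to $g$ in a way that does not spoil the proportionality conclusion — in other words, verifying that the gradient of $S$ on $\mathcal M$ is genuinely a nonzero multiple of $\Ric g$ (viewed in $\mathcal M$) and not of some other tensor. Since this is stated to be a special case of~\cite[Lemma~2.1]{AP16}, I would not redo that computation from scratch; instead I would cite it, remarking only that the positive-definiteness of $T$ guarantees $\mathcal M_T$ is a genuine smooth hypersurface so that the Lagrange-multiplier argument is valid, and that $T\in\mathcal M$ ensures $T$ is an admissible ``direction'' so that the multiplier $c$ is the desired constant in~\eqref{eq_PRC}.
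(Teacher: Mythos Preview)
The paper does not give its own proof of this lemma; it simply records it as a special case of \cite[Lemma~2.1]{AP16}. Your proposal ultimately defers to the same reference for the key computation, and the Lagrange-multiplier outline you sketch around it is the correct one.

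One small correction to your sketch: the tensor $\Ric g-\tfrac12 S(g)\,g$ in your first display is the gradient of the \emph{volume-normalised} Einstein--Hilbert functional, not of the pointwise scalar curvature $S:\mathcal M\to\mathbb R$ considered here. For the latter, the first variation on a general manifold reads $dS_g(h)=\Delta_g(\tr_g h)+\delta_g\delta_g h-\langle\Ric g,h\rangle_g$; on a homogeneous space with $G$-invariant $h$ the first two terms vanish identically, leaving simply $dS_g(h)=-\langle\Ric g,h\rangle_g$. There are no residual ``lower order terms'' to worry about, and no stray $S(g)\,g$ contribution. With this formula in hand your argument goes through cleanly: since $d(\tr_\bullet T)_g(h)=-\langle T,h\rangle_g$ as you say, the Lagrange-multiplier condition $dS_g\parallel d(\tr_\bullet T)_g$ on $T_g\mathcal M$ is exactly $\Ric g=cT$ for some $c\in\mathbb R$.
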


We will use this lemma in the proof of our main result, Theorem~\ref{thm_PRC}.

\begin{remark}
The restriction of $S$ to $\mathcal M_T$ is bounded above for every $T\in\mathcal M$. This is a consequence of~\cite[Equation~(1.3)]{MWWZ86} and the definition of~$\mathcal M_T$; cf.~Lemma~\ref{lem_smpl_bd} below. If the homogeneous space $M$ is effective and $T$ lies in~$\mathcal M$, then the following statements are equivalent:
\begin{enumerate}
\item
The restriction of $S$ to $\mathcal M_T$ is bounded below.
\item
The universal cover of $M$ is the product of several isotropy irreducible homogeneous spaces and a Euclidean space.
\end{enumerate}
One can prove this equivalence by repeating the argument from~\cite[Proof of Theorem~(2.1)]{MWWZ86} with minor modifications. If the two statements above hold, then all the metrics in $\mathcal M$ have the same Ricci curvature; see~\cite[Lemma~3.2]{APYRsubm}. In this case, the analysis of~\eqref{eq_PRC} is easy.
\end{remark}

Given a bilinear form $R$ on $\mathfrak m$ and a nonzero subspace $\mathfrak u\subset\mathfrak m$, we write $R|_{\mathfrak u}$ for the restriction of $R$ to~$\mathfrak u$. Let $\tr_QR|_{\mathfrak u}$ be the trace of $R|_{\mathfrak u}$ with respect to~$Q|_{\mathfrak u}$. If $R'$ is a bilinear form on $\mathfrak u$, denote
\begin{align}\label{lambda_pm_def}
\lambda_-(R')&=\inf\{R'(X,X)\,|\,X\in\mathfrak u~\mbox{and}~Q(X,X)=1\},\notag \\
\lambda_+(R')&=\sup\{R'(X,X)\,|\,X\in\mathfrak u~\mbox{and}~Q(X,X)=1\}.
\end{align}
Thus, $\lambda_-(R')$ and $\lambda_+(R')$ are the smallest and the largest eigenvalue of the matrix of $R'$ in a $Q|_{\mathfrak u}$-orthonormal basis of $\mathfrak u$. We will use the notation
\begin{align*}
\omega(\mathfrak u)=\min\{\dim\mathfrak v\,|\,\mathfrak v~\mbox{is a nonzero}\,\Ad(H)\mbox{-invariant subspace of}~\mathfrak u\}.
\end{align*}
It is clear that $\omega(\mathfrak u)$ always lies between~1 and~$\dim\mathfrak u$. In fact, $\omega(\mathfrak u)$ equals $\dim\mathfrak u$ if $\Ad(H)|_{\mathfrak u}$ is irreducible.

Given $\Ad(H)$-invariant subspaces $\mathfrak u\subset\mathfrak m$, $\mathfrak v\subset\mathfrak m$ and $\mathfrak w\subset\mathfrak m$, define a tensor $\Delta(\mathfrak u,\mathfrak v,\mathfrak w)\in\mathfrak u\otimes\mathfrak v^*\otimes\mathfrak w^*$ by the formula
\begin{align}\label{def_DeltaBig}
\Delta(\mathfrak u,\mathfrak v,\mathfrak w)(X,Y)=\pi_{\mathfrak u}[X,Y],\qquad X\in\mathfrak v,~Y\in\mathfrak w.
\end{align}
Here and in what follows, $\pi_{\mathfrak u}$ stands for the $Q$-orthogonal projection onto~$\mathfrak u$. Let $\langle \mathfrak u\mathfrak v\mathfrak w\rangle$ be the squared norm of $\Delta(\mathfrak u,\mathfrak v,\mathfrak w)$ with respect to the scalar product on $\mathfrak u\otimes\mathfrak v^*\otimes\mathfrak w^*$ induced by $Q|_{\mathfrak u}$, $Q|_{\mathfrak v}$ and $Q|_{\mathfrak w}$. The fact that $Q$ comes from a bi-invariant metric on $G$ implies
\begin{align*}
\langle\mathfrak u\mathfrak v\mathfrak w\rangle=\langle\mathfrak w\mathfrak u\mathfrak v\rangle=\langle\mathfrak v\mathfrak w\mathfrak u\rangle=\langle\mathfrak v\mathfrak u\mathfrak w\rangle=\langle\mathfrak u\mathfrak w\mathfrak v\rangle=\langle\mathfrak w\mathfrak v\mathfrak u\rangle.
\end{align*}
It is easy to compute $\langle \mathfrak u\mathfrak v\mathfrak w\rangle$ in terms of the structure constants of the homogeneous space $M$; see formula~\eqref{<ijk>[ijk]} below.

\subsection{The sufficient condition}\label{sec_result}

Our main result, Theorem~\ref{thm_PRC}, requires the following hypothesis. The class of homogeneous spaces for which this hypothesis holds is very broad. We discuss examples in Sections~\ref{sec_2sum} and~\ref{sec_flag}.

\begin{hypothesis}\label{hyp_flag}
Every Lie subalgebra $\mathfrak s\subset\mathfrak g$ such that $\mathfrak h\subset\mathfrak s$ and $\mathfrak h\ne\mathfrak s$ meets the following requirements:
\begin{enumerate}
\item
The representations $\Ad(H)|_{\mathfrak u}$ and $\Ad(H)|_{\mathfrak v}$ are inequivalent for every pair of nonzero $\Ad(H)$-invariant spaces $\mathfrak u\subset\mathfrak s\ominus\mathfrak h$ and $\mathfrak v\subset\mathfrak g\ominus\mathfrak s$.

\item
The commutator $[\mathfrak r,\mathfrak s]$ is nonzero for every $\Ad(H)$-invariant 1-dimensional subspace $\mathfrak r$ of $\mathfrak g\ominus\mathfrak s$.
\end{enumerate}
\end{hypothesis}

\begin{remark}
One can show that requirement~1 of Hypothesis~\ref{hyp_flag} holds for every $\mathfrak s$ if the isotropy representation of $M$ splits into pairwise inequivalent irreducible summands; cf.~the proof of Proposition~\ref{prop_g_flag_mfs} below. However, this requirement may be satisfied (at least, for some $\mathfrak s$) even if $M$ does not possess this property. To give an example, suppose $H=SO(k-2)$ with $k\ge4$ embedded naturally into $G=SO(k)$. Then $M$ is the Stiefel manifold $V_2\mathbb R^{k}$. Let $\mathfrak s$ be the direct sum of $\mathfrak{so}_2$ and $\mathfrak h=\mathfrak{so}_{k-2}$ embedded naturally into $\mathfrak g=\mathfrak{so}_k$. Then the representation $\Ad(H)|_{\mathfrak s\ominus\mathfrak h}$ is trivial, while the representation $\Ad(H)|_{\mathfrak g\ominus\mathfrak s}$ splits into two equivalent $(k-2)$-dimensional irreducible summands; see~\cite[Section~4]{MK98}.
\end{remark}

\begin{remark}
In a sense, requirement~2 of Hypothesis~\ref{hyp_flag} is necessary for Theorem~\ref{thm_PRC} to hold. We explain this after the proof of Lemma~\ref{lem_eta_wdpos}.
\end{remark}

\begin{remark}\label{rem_1d_alg}
Suppose $\mathfrak r$ is an $\Ad(H)$-invariant 1-dimensional subspace of $\mathfrak g\ominus\mathfrak s$. If the commutator $[\mathfrak r,\mathfrak s]$ equals $\{0\}$, then the direct sum of $\mathfrak r$ and $\mathfrak s$ is a Lie subalgebra of $\mathfrak g$ isomorphic to the direct sum of $\mathbb R$ and~$\mathfrak s$. It is obvious that requirement~2 of Hypothesis~\ref{hyp_flag} holds for $\mathfrak s$ if no such subalgebra exists.
\end{remark}

\begin{remark}
In Section~\ref{sec_flag}, we will encounter cases where $\mathfrak g\ominus\mathfrak s$ does not have any $\Ad(H)$-invariant 1-dimensional subspaces. In these cases, requirement~2 of Hypothesis~\ref{hyp_flag} is automatically satisfied for~$\mathfrak s$.
\end{remark}

Suppose $\mathfrak k$ and $\mathfrak k'$ are Lie subalgebras of $\mathfrak g$ such that 
\begin{align}\label{flag}
\mathfrak g\supset\mathfrak k\supset\mathfrak k'\supset\mathfrak h.
\end{align}
In order to state our main result, we need to introduce some terminology and notation.

\begin{definition}\label{def_simple_chain}
We call~(\ref{flag}) a \emph{simple chain} if $\mathfrak k'$ is a maximal Lie subalgebra of $\mathfrak k$ and $\mathfrak h\ne \mathfrak k'$.
\end{definition}

Let us emphasise that Definition~\ref{def_simple_chain} allows the equality $\mathfrak k=\mathfrak g$ but not $\mathfrak k'=\mathfrak k$. We denote
\begin{align}\label{def_funny_subsp}
\mathfrak j=\mathfrak g\ominus\mathfrak k,\qquad \mathfrak j'=\mathfrak g\ominus\mathfrak k',\qquad \mathfrak l=\mathfrak k\ominus\mathfrak k',\qquad \mathfrak n=\mathfrak k'\ominus\mathfrak h.
\end{align}
It is obvious that
\begin{align*}
\mathfrak g=\mathfrak j\oplus\mathfrak l\oplus\mathfrak n\oplus\mathfrak h=\mathfrak j'\oplus\mathfrak n\oplus\mathfrak h,\qquad \mathfrak j'=\mathfrak j\oplus\mathfrak l,\qquad \mathfrak k=\mathfrak l\oplus\mathfrak n\oplus\mathfrak h,\qquad \mathfrak k'=\mathfrak n\oplus\mathfrak h.
\end{align*}
Here and in what follows, the symbol $\oplus$ stands for the $Q$-orthogonal sum.

Suppose~\eqref{flag} is a simple chain. In order to state our main result, we need to associate a number, denoted $\eta(\mathfrak k,\mathfrak k')$, to this simple chain. Let $B$ be the Killing form of the Lie algebra $\mathfrak g$. Define $\eta(\mathfrak k,\mathfrak k')$ by the formula
\begin{align}\label{def_etai}
\eta(\mathfrak k,\mathfrak k')=\frac{2\tr_QB|_{\mathfrak n}+2\langle\mathfrak n\mathfrak j'\mathfrak j'\rangle+\langle\mathfrak n\mathfrak n\mathfrak n\rangle}{\omega(\mathfrak n)(2\tr_QB|_{\mathfrak l}+\langle\mathfrak l\mathfrak l\mathfrak l\rangle+2\langle\mathfrak l\mathfrak j\mathfrak j\rangle)}.
\end{align}
Lemma~\ref{lem_eta_wdpos} below shows, when Hypothesis~\ref{hyp_flag} is satisfied, that the denominator in~\eqref{def_etai} can never equal~0 and that $\eta(\mathfrak k,\mathfrak k')\ge0$. We are now ready to formulate the main result of the present paper. We prove it in Subsections~\ref{sec_lemmas}--\ref{sec_final2}.

\begin{theorem}\label{thm_PRC}
Suppose Hypothesis~\ref{hyp_flag} is satisfied for the homogeneous space~$M$. Consider a tensor field $T\in\mathcal M$. If the inequality
\begin{align}\label{ineq_main_thm}
\frac{\lambda_-(T|_{\mathfrak n})}{\tr_Q T|_{\mathfrak l}}>\eta(\mathfrak k,\mathfrak k')
\end{align}
holds for every simple chain of the form~(\ref{flag}), then there exists a Riemannian metric $g\in\mathcal M_T$ such that $S(g)\ge S(h)$ for all $h\in\mathcal M_T$. The Ricci curvature of $g$ coincides with $cT$ for some $c>0$.
\end{theorem}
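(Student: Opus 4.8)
The plan is to exploit the variational characterisation in Lemma~\ref{lem_var}: it suffices to show that $S$ attains a global maximum on $\mathcal M_T$, since any such maximiser $g$ is then a critical point, hence satisfies $\Ric g=cT$ for some $c\in\mathbb R$, and one checks $c>0$ separately (for instance, because $S(g)>0$, as a metric of positive scalar curvature must exist in $\mathcal M_T$ when the hypotheses hold, or by a direct scaling argument using $\tr_gT=1$). The manifold $\mathcal M_T$ is non-compact, so the heart of the matter is a \emph{properness}/coercivity statement: I would show that if a sequence $(g_i)\subset\mathcal M_T$ degenerates --- meaning it leaves every compact subset of $\mathcal M_T$ --- then $S(g_i)\to-\infty$, or at least $\limsup_i S(g_i)$ is strictly less than $\sup_{\mathcal M_T}S$ evaluated along some non-degenerating test sequence. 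Combined with the fact (noted in the Remark after Lemma~\ref{lem_var}) that $S$ is bounded above on $\mathcal M_T$, this forces the supremum to be achieved at an interior point.

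The structure of the degeneration analysis is where the simple chains and the quantity $\eta(\mathfrak k,\mathfrak k')$ enter. A sequence in $\mathcal M_T$ degenerates precisely when some eigenvalue ratios of $g_i$ (relative to $Q$) blow up or collapse; the constraint $\tr_{g_i}T=1$ couples these. I would diagonalise $g_i$ against $Q$ in a way compatible with an $\Ad(H)$-invariant decomposition, track which subspaces ``shrink'' and which ``grow'', and observe that a degenerating direction singles out an intermediate subalgebra: the subspace on which $g_i$ stays comparatively large exponentiates to (the Lie algebra of) some intermediate subgroup, and Hypothesis~\ref{hyp_flag} plus maximality considerations let one reduce to a simple chain $\mathfrak g\supset\mathfrak k\supset\mathfrak k'\supset\mathfrak h$ with $\mathfrak n=\mathfrak k'\ominus\mathfrak h$ the ``large'' part and $\mathfrak l=\mathfrak k\ominus\mathfrak k'$ the ``small'' part. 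Using the standard scalar-curvature formula (\`a la \cite[Corollary~7.39]{AB87}) expressed through $\tr_QB|_{\mathfrak n}$, $\tr_QB|_{\mathfrak l}$ and the brackets $\langle\mathfrak n\mathfrak n\mathfrak n\rangle$, $\langle\mathfrak n\mathfrak j'\mathfrak j'\rangle$, $\langle\mathfrak l\mathfrak l\mathfrak l\rangle$, $\langle\mathfrak l\mathfrak j\mathfrak j\rangle$, one extracts the leading-order behaviour of $S(g_i)$ along the degeneration; the threshold between $S(g_i)\to-\infty$ and $S(g_i)$ bounded (or $\to+\infty$, which the upper bound rules out) is exactly the inequality $\lambda_-(T|_{\mathfrak n})/\tr_QT|_{\mathfrak l}>\eta(\mathfrak k,\mathfrak k')$. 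The role of requirement~2 of Hypothesis~\ref{hyp_flag} is to guarantee the denominator of $\eta$ does not vanish (no abelian factor splitting off), and requirement~1 keeps the isotropy summands straddling $\mathfrak s$ from mixing, so that the trace quantities behave additively under the decomposition; this is presumably the content of Lemma~\ref{lem_eta_wdpos}, which I would invoke.

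Concretely I would proceed in steps: (i) recall the scalar-curvature formula for $g\in\mathcal M$ in terms of $Q$-eigenvalues and structure constants, and the bound $S|_{\mathcal M_T}\le\text{const}$; (ii) fix a degenerating sequence $(g_i)\subset\mathcal M_T$ and pass to a subsequence so that the ordered $Q$-eigenvalues of $g_i$ have well-defined (possibly infinite or zero) limit ratios, thereby defining a ``collapsing filtration'' of $\mathfrak m$ by $\Ad(H)$-invariant subspaces; (iii) show the top piece of this filtration, together with $\mathfrak h$, is a subalgebra --- the standard argument that a sublimit of rescaled metrics forces $[\mathfrak n,\mathfrak n]\subset\mathfrak n$ --- and reduce, using Hypothesis~\ref{hyp_flag}, to a simple chain; (iv) estimate $S(g_i)$ from above by grouping terms: the ``good'' negative contribution comes from $\tr_QB|_{\mathfrak n}$-type terms scaled by the large eigenvalues, the ``bad'' contribution from $\langle\mathfrak l\cdots\rangle$ terms, and the constraint $\tr_{g_i}T=1$ together with $\lambda_-(T|_{\mathfrak n})$ controls how large the $\mathfrak n$-eigenvalues can be relative to the $\mathfrak l$-eigenvalues; (v) conclude that strict inequality~\eqref{ineq_main_thm} makes the negative term dominate, so $S(g_i)\to-\infty$, whence no maximising sequence degenerates and a maximiser $g$ exists; (vi) verify $g$ is a critical point via Lemma~\ref{lem_var} and that the constant $c$ is positive. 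The main obstacle I anticipate is step (iv)--(v): correctly organising the multi-parameter degeneration --- several eigenvalue scales can blow up simultaneously at different rates --- and showing that the \emph{worst} such degeneration is always governed by a single simple chain, so that checking~\eqref{ineq_main_thm} over all simple chains genuinely suffices; handling the cross terms between non-adjacent pieces of the filtration (which contribute positively to $S$ but must be shown subdominant) is the delicate part, and this is likely where the bulk of the lemmas in Subsection~\ref{sec_lemmas} are spent.
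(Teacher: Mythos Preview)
Your plan has a genuine gap at step~(v): the functional $S$ is \emph{not} proper on $\mathcal M_T$, so one cannot hope to show $S(g_i)\to-\infty$ along a degenerating sequence. The paper says this explicitly (see the remark immediately after Theorem~\ref{thm_PRC} and Remark~\ref{rem_proper}): whenever $\mathfrak h$ is not maximal in $\mathfrak g$, there is a one-parameter family $g(t)\in\mathcal M_T$ with $t\to\infty$ along which $S(g(t))$ converges to a \emph{finite} limit, namely the value $\hat S(g_0)$ of an auxiliary functional on a smaller space. So the dichotomy you set up --- ``$S\to-\infty$'' versus ``$S$ bounded'' with threshold $\eta(\mathfrak k,\mathfrak k')$ --- is not what actually happens, and the ``coercivity'' you try to extract from~\eqref{ineq_main_thm} does not exist.

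What the paper does instead is more delicate. It introduces a modified functional $\hat S$ on each space $\mathcal M_T(\mathfrak k)$ of scalar products on $\mathfrak k\ominus\mathfrak h$ and proves, by induction on $\dim\mathfrak k$, that $\hat S$ attains its maximum on $\mathcal M_T(\mathfrak k)$. The key estimate (Lemma~\ref{lem_eps+max}) says that outside a compact set one has $\hat S(g)\le\epsilon+\max_i\sup_{\mathcal M_T(\mathfrak k_i)}\hat S$, where $\mathfrak k_i$ runs over the maximal subalgebras of $\mathfrak k$ containing $\mathfrak h$ properly; so the ``boundary'' supremum is controlled by the suprema on strictly smaller spaces, which are attained by induction. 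The role of inequality~\eqref{ineq_main_thm} is \emph{not} to push $S$ to $-\infty$ but to guarantee (Lemma~\ref{lem_step}) that one can build an interior competitor $g(t_0)\in\mathcal M_T(\mathfrak k)$ with $\hat S(g(t_0))$ strictly larger than that boundary supremum: one computes $\lim_{t\to\infty}\hat S(g(t))=\hat S(g_0)$ and shows $\frac d{dt}\hat S(g(t))<0$ for large $t$ precisely when~\eqref{ineq_main_thm} holds. This strict inequality plus Lemma~\ref{lem_eps+max} localises any maximising sequence to a compact set.

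Two smaller points. Your filtration picture has the roles of $\mathfrak n$ and $\mathfrak l$ inverted: the eigenvalues of $g$ are bounded below on $\mathcal M_T$ (Lemma~\ref{lem_smpl_bd}), so degeneration means eigenvalues tending to $+\infty$, and it is the $\mathfrak l$-directions that blow up while the $\mathfrak n$-directions stay bounded. And the argument that $c>0$ is not a scaling trick; the paper uses Bochner's theorem twice --- once to rule out $c<0$, and again (together with Hypothesis~\ref{hyp_flag}) to rule out the existence of a Ricci-flat $G$-invariant metric, hence $c=0$.
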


Subsection~\ref{sec_lemmas} contains simple and ``practical" formulas for the quantities appearing in~\eqref{ineq_main_thm}. Specifically, the eigenvalue $\lambda_-(T|_{\mathfrak n})$ and the trace $\tr_Q T|_{\mathfrak l}$ are given by~\eqref{lT_trT_smpl_flas}, while the computation of $\eta(\mathfrak k,\mathfrak k')$ on concrete homogeneous spaces is likely to involve~\eqref{eq_omega_smpl}, \eqref{trace_bd} and~\eqref{<ijk>[ijk]}. One can also find $\eta(\mathfrak k,\mathfrak k')$ with the aid of Lemma~\ref{lem_eta_wdpos}.

In Sections~\ref{sec_2sum} and~\ref{sec_flag}, we discuss several classes of examples that illustrate the use of Theorem~\ref{thm_PRC}. As part of this discussion, we compute the numbers $\eta(\mathfrak k,\mathfrak k')$ explicitly for all simple chains on certain generalised flag manifolds. In Subsection~\ref{subsec_cor}, we state two corollaries of Theorem~\ref{thm_PRC}. One of them provides an alternative to~\eqref{ineq_main_thm}, and the other deals with the case where~\eqref{ineq_main_thm} holds for all $T\in\mathcal M$.

\begin{remark}
Theorem~\ref{thm_PRC} assumes that the tensor field $T$ is positive-definite. Let us make a few comments related to this assumption. If $T$ is degenerate, then the restriction of $S$ to $\mathcal M_T$ may be unbounded above. This is possible even if $M$ satisfies Hypothesis~\ref{hyp_flag}; see~\cite[Remark~3.2]{AP16} for a class of examples. If $T$ has mixed signature, the techniques used in our proof of Theorem~\ref{thm_PRC} appear to be ineffective. Particularly, the estimates in Lemmas~\ref{lem_est_max}, \ref{lem_smpl_bd} and~\ref{lem_eps+max} seem to break down. Finally, if $T$ is negative-definite, a Riemannian metric $g\in\mathcal M_T$ with Ricci curvature $cT$ does not exist for any~$c>0$. This is a consequence of Bochner's theorem; see~\cite[Theorem~1.84]{AB87}.
\end{remark}

\begin{remark}
Given $T\in\mathcal M$, if $\mathfrak h$ is not a maximal Lie subalgebra of $\mathfrak g$, Hypothesis~\ref{hyp_flag} is satisfied, and~\eqref{ineq_main_thm} holds for every simple chain of the form~\eqref{flag}, then the restriction of $S$ to $\mathcal M_T$ cannot be proper. This observation follows from Remark~\ref{rem_proper} and Lemma~\ref{lem_ind_dim_subgr} below. In a sense, it is an analogue of the ``only if" part of~\cite[Theorem~(2.2)]{MWWZ86}, a result concerning the restriction of $S$ to the set $\mathcal M_1$ given by~\eqref{intro_def_M1}.
\end{remark}

\subsection{Some background and preparatory lemmas}\label{sec_lemmas}

The background material in this subsection is mostly standard. It is presented in greater detail in, for example,~\cite{MWWZ86,YNERVS07}. However, to the best of the authors' knowledge, Lemmas~\ref{lem_k=m}, \ref{lem_vanish_brak} and \ref{lem_eta_wdpos}, as well as Proposition~\ref{lem_eta0}, are new.

Throughout Subections~\ref{sec_lemmas}--\ref{sec_final2}, we assume Hypothesis~\ref{hyp_flag} holds. Some of our lemmas can actually be proven under milder conditions than those imposed. This is explained in Remark~\ref{rem_milder}. As above, throughout Subsections~\ref{sec_lemmas}--\ref{sec_proofs}, we suppose $\mathfrak k$ and $\mathfrak k'$ are distinct Lie subalgebras of $\mathfrak g$ satisfying the inclusions $\mathfrak h\subset\mathfrak k'\subset\mathfrak k$. However, unless stated otherwise, we do not require~\eqref{flag} to be a simple chain. The spaces $\mathfrak j$, $\mathfrak j'$, $\mathfrak l$ and $\mathfrak n$ are defined by~\eqref{def_funny_subsp}.

Consider a $Q$-orthogonal $\Ad(H)$-invariant decomposition
\begin{align}\label{m_decomp}
\mathfrak m=\mathfrak m_1\oplus\cdots\oplus\mathfrak m_s
\end{align}
such that $\Ad(H)|_{\mathfrak m_i}$ is irreducible for each $i=1,\ldots,s$. Let $d_i$ denote the dimension of $\mathfrak m_i$. Generally speaking, the space $\mathfrak m$ admits more than one decomposition of the form~\eqref{m_decomp}. However, the number $s$ and the multiset $\{d_1,\ldots,d_s\}$ must be the same for all such decompositions.

The summands $\mathfrak m_1,\ldots,\mathfrak m_s$ are determined uniquely up to order if $\Ad(H)|_{\mathfrak m_i}$ is inequivalent to $\Ad(H)|_{\mathfrak m_j}$ whenever $i\ne j$. This fact can be derived from Schur's lemma; see, e.g.,~\cite[Subsection~2.1]{APYRsubm}.

Our analysis will rely heavily on the following consequence of Hypothesis~\ref{hyp_flag}.

\begin{lemma}\label{lem_k=m}
There exists a set $J_{\mathfrak k}\subset\{1,\ldots,s\}$ satisfying the equality
\begin{align}\label{eq_bigoplus}
\mathfrak k\ominus\mathfrak h=\bigoplus_{j\in J_{\mathfrak k}}{\mathfrak m}_j.
\end{align}
Evidently, such a set is unique.
\end{lemma}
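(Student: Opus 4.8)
The plan is to show that $\mathfrak{k} \ominus \mathfrak{h}$ is spanned by a subcollection of the irreducible isotropy summands $\mathfrak{m}_1, \ldots, \mathfrak{m}_s$. First I would observe that, since $\mathfrak{k}$ is a Lie subalgebra containing $\mathfrak{h}$ and $Q$ comes from a bi-invariant metric, the space $\mathfrak{k} \ominus \mathfrak{h}$ is $\Ad(H)$-invariant (indeed $\mathfrak{k}$ and $\mathfrak{h}$ are both $\Ad(H)$-invariant, being subalgebras normalised by $H$, hence so is the $Q$-orthogonal complement). Decompose $\mathfrak{k} \ominus \mathfrak{h}$ into $\Ad(H)$-irreducible pieces, say $\mathfrak{k} \ominus \mathfrak{h} = \mathfrak{p}_1 \oplus \cdots \oplus \mathfrak{p}_t$. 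Symmetrically, $\mathfrak{g} \ominus \mathfrak{k} = \mathfrak{m} \ominus (\mathfrak{k}\ominus\mathfrak{h})$ is also $\Ad(H)$-invariant; decompose it into irreducibles as well. Putting these decompositions together gives one particular decomposition of $\mathfrak{m}$ of the form~\eqref{m_decomp}, with each $\mathfrak{p}_a$ occurring among the $\mathfrak{m}_i$'s for that choice.

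The key point is to pass from \emph{one} such decomposition to the \emph{given} one. Here I would invoke Hypothesis~\ref{hyp_flag}: if $\mathfrak{h} \ne \mathfrak{k}$, then taking $\mathfrak{s} = \mathfrak{k}$, requirement~1 tells us that no nonzero $\Ad(H)$-invariant subspace of $\mathfrak{k} \ominus \mathfrak{h}$ is equivalent to any nonzero $\Ad(H)$-invariant subspace of $\mathfrak{g} \ominus \mathfrak{k}$. In particular, each irreducible summand $\mathfrak{p}_a$ of $\mathfrak{k} \ominus \mathfrak{h}$ is inequivalent, as an $\Ad(H)$-representation, to every irreducible summand of $\mathfrak{g} \ominus \mathfrak{k}$. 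Now take the given decomposition $\mathfrak{m} = \mathfrak{m}_1 \oplus \cdots \oplus \mathfrak{m}_s$. For each $i$, either $\Ad(H)|_{\mathfrak{m}_i}$ is equivalent to one of the irreducible summands of $\mathfrak{k}\ominus\mathfrak{h}$, or it is equivalent to one of the summands of $\mathfrak{g}\ominus\mathfrak{k}$; by the inequivalence just established these two alternatives are mutually exclusive. Define $J_{\mathfrak{k}}$ to be the set of indices $i$ for which the first alternative holds. The content of the lemma is then that $\bigoplus_{i \in J_{\mathfrak{k}}} \mathfrak{m}_i = \mathfrak{k} \ominus \mathfrak{h}$, and this should follow from a clean Schur-type argument.

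To make that last step precise, I would argue as follows. Let $\mathfrak{a} = \bigoplus_{i \in J_{\mathfrak{k}}} \mathfrak{m}_i$ and $\mathfrak{b} = \bigoplus_{i \notin J_{\mathfrak{k}}} \mathfrak{m}_i$, so $\mathfrak{m} = \mathfrak{a} \oplus \mathfrak{b}$, and write $P$ for the $Q$-orthogonal projection of $\mathfrak{m}$ onto $\mathfrak{k} \ominus \mathfrak{h}$ along $\mathfrak{g} \ominus \mathfrak{k}$; this is $\Ad(H)$-equivariant. For $i \notin J_{\mathfrak{k}}$, $\mathfrak{m}_i$ is $\Ad(H)$-equivalent to a summand of $\mathfrak{g} \ominus \mathfrak{k}$ and hence, by the inequivalence of those summands to all summands of $\mathfrak{k} \ominus \mathfrak{h}$, any $\Ad(H)$-map $\mathfrak{m}_i \to \mathfrak{k}\ominus\mathfrak{h}$ vanishes; applying this to $P|_{\mathfrak{m}_i}$ gives $\mathfrak{m}_i \subset \ker P = \mathfrak{g} \ominus \mathfrak{k}$, so $\mathfrak{b} \subset \mathfrak{g}\ominus\mathfrak{k}$. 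Running the same argument with the complementary projection $I - P$ and the indices $i \in J_{\mathfrak{k}}$ gives $\mathfrak{a} \subset \mathfrak{k} \ominus \mathfrak{h}$. Since $\dim\mathfrak{a} + \dim\mathfrak{b} = \dim\mathfrak{m} = \dim(\mathfrak{k}\ominus\mathfrak{h}) + \dim(\mathfrak{g}\ominus\mathfrak{k})$ and $\mathfrak{a} \cap (\mathfrak{g}\ominus\mathfrak{k}) = 0$ (as $\mathfrak{a} \subset \mathfrak{k}\ominus\mathfrak{h}$), both inclusions must be equalities, yielding~\eqref{eq_bigoplus}. Uniqueness of $J_{\mathfrak{k}}$ is immediate once the equality is known, since the multiset of summands on the right is forced. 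In the degenerate case $\mathfrak{h} = \mathfrak{k}$ one simply takes $J_{\mathfrak{k}} = \emptyset$, and Hypothesis~\ref{hyp_flag} is not needed.

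The main obstacle I anticipate is the possible \emph{multiplicity} among the $\mathfrak{m}_i$: when $\Ad(H)|_{\mathfrak{m}_i}$ is equivalent to $\Ad(H)|_{\mathfrak{m}_j}$ for $i \ne j$, the decomposition~\eqref{m_decomp} is not unique, and one must be careful that ``$\mathfrak{m}_i$ equivalent to a summand of $\mathfrak{k}\ominus\mathfrak{h}$'' is well-defined independently of which irreducible decomposition of $\mathfrak{k}\ominus\mathfrak{h}$ one picked. The dichotomy above sidesteps this: the statement used is only that $\mathfrak{m}_i$ is equivalent to \emph{some} summand of exactly one of $\mathfrak{k}\ominus\mathfrak{h}$ or $\mathfrak{g}\ominus\mathfrak{k}$, which is unambiguous because those two spaces share no common irreducible constituent. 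The projection argument then does the rest without ever needing to track individual isomorphisms. It is worth double-checking that requirement~1 of Hypothesis~\ref{hyp_flag} indeed applies with $\mathfrak{s} = \mathfrak{k}$ — this needs $\mathfrak{k}$ to be a genuine Lie subalgebra with $\mathfrak{h}\subset\mathfrak{k}$, $\mathfrak{h}\ne\mathfrak{k}$, which is exactly the standing assumption in this subsection.
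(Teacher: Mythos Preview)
Your argument is correct and follows essentially the same approach as the paper: introduce an auxiliary $\Ad(H)$-irreducible decomposition of $\mathfrak m$ adapted to $\mathfrak k\ominus\mathfrak h$ and $\mathfrak g\ominus\mathfrak k$, then use Schur's lemma on $\Ad(H)$-equivariant projections together with requirement~1 of Hypothesis~\ref{hyp_flag} (applied with $\mathfrak s=\mathfrak k$) to force each $\mathfrak m_i$ to lie entirely in one of the two pieces. The paper's version tracks the individual projections $\pi_{jk}:\mathfrak m_j\to\mathfrak m'_k$ onto the auxiliary summands and defines $J_{\mathfrak k}$ as the set of $j$ for which some $\pi_{jk}$ with $k\le p$ is an isomorphism, whereas you work with the global projection $P$ onto $\mathfrak k\ominus\mathfrak h$ and phrase the dichotomy in terms of isotypes; this is only a cosmetic repackaging of the same idea.
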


Throughout the paper, we assume 
\begin{align*}
\bigoplus_{j\in\emptyset}{\mathfrak m}_j=\{0\}.
\end{align*}

\begin{proof}[Proof of Lemma~\ref{lem_k=m}]
Fix a $Q$-orthogonal $\Ad(H)$-invariant 
decomposition
\begin{align*}
\mathfrak m&=\mathfrak m'_1\oplus\cdots\oplus\mathfrak m'_s
\end{align*}
such that $\Ad(H)|_{\mathfrak m'_j}$ is irreducible for each $j=1,\ldots,s$ and
\begin{align*}
\mathfrak k\ominus\mathfrak h=\mathfrak m'_1\oplus\cdots\oplus\mathfrak m'_p
\end{align*}
for some $p=1,\ldots,s$. One can easily verify that such a decomposition exists. Consider the map $\pi_{jk}:\mathfrak m_j\to\mathfrak m'_k$ sending a vector in $\mathfrak m_j$ to its $Q$-orthogonal projection onto $\mathfrak m_k'$. Clearly, this map is $\Ad(H)$-invariant for all $j,k=1,\ldots,s$. It is, therefore, an isomorphism or zero by Schur's lemma. Define
\begin{align*}
J_{\mathfrak k}=\{j\in[1,s]\cap\mathbb N\,|\,\pi_{jk}~\mbox{is an isomorphism for some}~k\in[1,p]\cap\mathbb N\}.
\end{align*}
We claim that~\eqref{eq_bigoplus} holds. To prove this, we first fix $k\le p$ and show that 
\begin{align}\label{inclopl}
\mathfrak m_k'\subset\bigoplus_{j\in J_{\mathfrak k}}\mathfrak m_j.
\end{align}

Consider the map $\pi_{kl}':\mathfrak m_k'\to\mathfrak m_l$ sending a vector in $\mathfrak m'_k$ to its $Q$-orthogonal projection onto $\mathfrak m_l$. Choose $X\in\mathfrak m_k'$. The equality
\begin{align*}
X=\pi'_{k1}X+\cdots+\pi'_{ks}X
\end{align*}
holds true. To prove formula~\eqref{inclopl}, it suffices to show that $l\in J_{\mathfrak k}$ whenever $\pi'_{kl}X\ne0$. Clearly, $\mathfrak m_k'$ is not orthogonal to $\mathfrak m_l$ if $\pi'_{kl}X\ne0$. Therefore, $\pi_{lk}\ne0$ if this inequality holds. Schur's lemma then implies that $\pi_{lk}$ must be an isomorphism. Therefore, $l$ lies in $J_{\mathfrak k}$, formula~\eqref{inclopl} holds, and $\mathfrak k\ominus\mathfrak h$ is a subset of~$\bigoplus_{j\in J_{\mathfrak k}}\mathfrak m_j$.

We now fix $k>p$ and $l\in J_{\mathfrak k}$. Our next step is to prove that $Q(\mathfrak m'_k,\mathfrak m_l)=\{0\}$. This equality implies that the $Q$-orthogonal complement of $\bigoplus_{j\in J_{\mathfrak k}}\mathfrak m_j$ contains the $Q$-orthogonal complement of $\mathfrak k$. This fact, in its turn, shows that $\bigoplus_{j\in J_{\mathfrak k}}\mathfrak m_j$ is a subset of $\mathfrak k\ominus\mathfrak h$. Consequently, formula~\eqref{eq_bigoplus} holds.

Assume $Q(\mathfrak m'_k,\mathfrak m_l)\ne\{0\}$. By Schur's lemma, the map $\pi'_{kl}$ is then an isomorphism. Since $l$ lies in~$J_{\mathfrak k}$, there exists $q\le p$ such that $\pi_{lq}$ is an isomorphism as well. Evidently, $k\ne q$. Consider the map $\pi_{lq}\pi'_{kl}:\mathfrak m'_k\to\mathfrak m_q$. It is an $\Ad(H)$-invariant isomorphism. However, the existence of such an isomorphism contradicts requirement~1 of Hypothesis~\ref{hyp_flag}.
\end{proof}

\begin{corollary}\label{cor_finite_subalg}
The Lie algebra $\mathfrak g$ has at most $2^s$ distinct Lie subalgebras containing $\mathfrak h$.
\end{corollary}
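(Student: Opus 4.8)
The plan is to turn every Lie subalgebra of $\mathfrak g$ containing $\mathfrak h$ into a subset of $\{1,\dots,s\}$ via Lemma~\ref{lem_k=m}, and to check that the resulting assignment is injective. The first step is a small observation: although Lemma~\ref{lem_k=m} is phrased in the running context where $\mathfrak k$ sits at the top of a chain $\mathfrak h\subset\mathfrak k'\subset\mathfrak k$, its proof uses nothing about $\mathfrak k'$ — it invokes only requirement~1 of Hypothesis~\ref{hyp_flag} with $\mathfrak s=\mathfrak k$. Hence, fixing once and for all an $\Ad(H)$-invariant decomposition of the form~\eqref{m_decomp}, I would note that for \emph{any} Lie subalgebra $\mathfrak k\subset\mathfrak g$ with $\mathfrak h\subsetneq\mathfrak k$ there is a unique $J_{\mathfrak k}\subset\{1,\dots,s\}$ with $\mathfrak k\ominus\mathfrak h=\bigoplus_{j\in J_{\mathfrak k}}\mathfrak m_j$. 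For $\mathfrak k=\mathfrak h$ one sets $J_{\mathfrak k}=\emptyset$, consistently with the convention $\bigoplus_{j\in\emptyset}\mathfrak m_j=\{0\}$; the extreme case $\mathfrak k=\mathfrak g$, which yields $J_{\mathfrak g}=\{1,\dots,s\}$, is already covered by the lemma.

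Next I would observe that a Lie subalgebra $\mathfrak k$ with $\mathfrak h\subset\mathfrak k$ is recovered from $J_{\mathfrak k}$ through the formula $\mathfrak k=\mathfrak h\oplus\bigoplus_{j\in J_{\mathfrak k}}\mathfrak m_j$. Therefore the assignment $\mathfrak k\mapsto J_{\mathfrak k}$ is an injection from the collection of all Lie subalgebras of $\mathfrak g$ containing $\mathfrak h$ into the power set of $\{1,\dots,s\}$. Since this power set has exactly $2^s$ elements, the collection in question has at most $2^s$ members, which is precisely the assertion of Corollary~\ref{cor_finite_subalg}.

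There is essentially no obstacle here: the entire content lies in Lemma~\ref{lem_k=m}, and the only point that calls for a moment's care is verifying that the proof of that lemma applies to a single subalgebra $\mathfrak k$ rather than to a subalgebra embedded in a chain, together with the trivial bookkeeping for the endpoints $\mathfrak k=\mathfrak h$ and $\mathfrak k=\mathfrak g$.
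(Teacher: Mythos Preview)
Your argument is correct and is exactly the reasoning the paper intends: the corollary is stated without proof immediately after Lemma~\ref{lem_k=m}, and the paper then records $J_{\mathfrak h}=\emptyset$ and $J_{\mathfrak g}=\{1,\dots,s\}$, so the injection $\mathfrak k\mapsto J_{\mathfrak k}$ into the power set of $\{1,\dots,s\}$ is precisely what is being invoked. Your care in noting that the proof of Lemma~\ref{lem_k=m} uses only requirement~1 of Hypothesis~\ref{hyp_flag} for the single algebra $\mathfrak k$ (and not the auxiliary $\mathfrak k'$) is appropriate and matches how the paper applies the lemma.
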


Define $J_{\mathfrak h}=\emptyset$. Observe that $J_{\mathfrak g}=\{1,\ldots,s\}$. It will be convenient for us to set
\begin{align}\label{def_diff_Js}
J_{\mathfrak j}=J_{\mathfrak g}\setminus J_{\mathfrak k},\qquad J_{\mathfrak j'}=J_{\mathfrak g}\setminus J_{\mathfrak k'},\qquad J_{\mathfrak l}=J_{\mathfrak k}\setminus J_{\mathfrak k'}.
\end{align}
Evidently,
\begin{align}\label{eq_lsum_jsum}
\mathfrak j=\bigoplus_{j\in J_{\mathfrak j}}\mathfrak m_j,\qquad \mathfrak j'=\bigoplus_{j\in J_{\mathfrak j'}}\mathfrak m_j,\qquad\mathfrak l=\bigoplus_{j\in J_{\mathfrak l}}\mathfrak m_j,\qquad 
\mathfrak n=\bigoplus_{j\in J_{\mathfrak k'}}\mathfrak m_j,
\end{align}
which implies
\begin{align}\label{eq_omega_smpl}
\omega(\mathfrak n)=\min_{j\in J_{\mathfrak k'}}d_j.
\end{align}
Given $T\in\mathcal M$, it is always possible to choose the decomposition~\eqref{m_decomp} so that
\begin{align*}
T=\sum_{i=1}^sz_i\pi_{\mathfrak m_i}^*Q,\qquad z_i>0;
\end{align*}
see~\cite[page~180]{MWWZ86}. If this formula holds, then
\begin{align}\label{lT_trT_smpl_flas}
\lambda_-(T|_{\mathfrak n})=\min_{i\in J_{\mathfrak k'}}z_i,\qquad 
\tr_QT|_{\mathfrak l}=\sum_{i\in J_{\mathfrak l}}d_iz_i.
\end{align}

Recall that $B$ denotes the Killing form of~$\mathfrak g$. For every $i=1,\ldots,s$, because $\Ad(H)|_{\mathfrak m_i}$ is irreducible, there exists $b_i\ge0$ such that
\begin{align}\label{b_def}
B|_{\mathfrak m_i} = -b_iQ|_{\mathfrak m_i}.
\end{align}
It is clear that
\begin{align}\label{trace_bd}
\tr_QB|_{\mathfrak n}=-\sum_{j\in J_{\mathfrak k'}}d_jb_j,\qquad \tr_QB|_{\mathfrak l}=-\sum_{j\in J_{\mathfrak l}}d_jb_j.
\end{align}
Given $i,j,k\in\{1,\ldots,s\}$, define
\begin{align*}
[ijk]=\langle\mathfrak m_i\mathfrak m_j\mathfrak m_k\rangle.
\end{align*}
Note that $[ijk]$ is symmetric in all three indices. The numbers $([ijk])_{i,j,k=1}^s$ are often called the \emph{structure constants} of the homogeneous space $M$. If
\begin{align}\label{eq_uvw_sum}
\mathfrak u=\bigoplus_{i\in \mathcal J_{\mathfrak u}}\mathfrak m_i, \qquad \mathfrak v=\bigoplus_{i\in \mathcal J_{\mathfrak v}}\mathfrak m_i,\qquad \mathfrak w=\bigoplus_{i\in \mathcal J_{\mathfrak w}}\mathfrak m_i,
\end{align}
where $\mathcal J_{\mathfrak u}$, $\mathcal J_{\mathfrak v}$ and $\mathcal J_{\mathfrak w}$ are subsets of $\{1,\ldots,s\}$, then
\begin{align}\label{<ijk>[ijk]}
\langle\mathfrak u\mathfrak v\mathfrak w\rangle=\sum_{i\in\mathcal J_{\mathfrak u}}\sum_{j\in\mathcal J_{\mathfrak v}}\sum_{k\in\mathcal J_{\mathfrak w}}[ijk].
\end{align}
(We interpret the sum over the empty set as~0.)

\begin{lemma}\label{lem_vanish_brak}
If $i\in J_{\mathfrak l}$ and $j,k\in J_{\mathfrak k'}$, then $[ijk]=0$.
\end{lemma}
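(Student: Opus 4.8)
The proof rests on a single structural observation: $\mathfrak k'$ is a Lie subalgebra, and the space $\mathfrak l$ is $Q$-orthogonal to it. First I would unwind the hypotheses using~\eqref{eq_lsum_jsum}: the assumption $i\in J_{\mathfrak l}$ means $\mathfrak m_i\subset\mathfrak l=\mathfrak k\ominus\mathfrak k'$, while $j,k\in J_{\mathfrak k'}$ means $\mathfrak m_j,\mathfrak m_k\subset\mathfrak n=\mathfrak k'\ominus\mathfrak h\subset\mathfrak k'$.

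Next, fix arbitrary $X\in\mathfrak m_j$ and $Y\in\mathfrak m_k$. Since $X,Y\in\mathfrak k'$ and $\mathfrak k'$ is closed under the Lie bracket, we have $[X,Y]\in\mathfrak k'$. On the other hand, $\mathfrak m_i\subset\mathfrak k\ominus\mathfrak k'$ is by construction $Q$-orthogonal to $\mathfrak k'$, so $\pi_{\mathfrak m_i}[X,Y]=0$. Because $X$ and $Y$ were arbitrary, the tensor $\Delta(\mathfrak m_i,\mathfrak m_j,\mathfrak m_k)$ defined in~\eqref{def_DeltaBig} vanishes identically, whence $[ijk]=\langle\mathfrak m_i\mathfrak m_j\mathfrak m_k\rangle=0$, as desired.

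There is essentially no obstacle in this argument; the only points requiring attention are to confirm, via~\eqref{eq_lsum_jsum}, that $\mathfrak l$ and $\mathfrak n$ are indeed the asserted sums of isotropy summands, and to invoke the standing assumption that $\mathfrak k'$ is a Lie subalgebra of $\mathfrak g$. Observe that neither Hypothesis~\ref{hyp_flag} nor the requirement that~\eqref{flag} be a simple chain enters here: the conclusion depends only on the chain of inclusions $\mathfrak h\subset\mathfrak k'\subset\mathfrak k\subset\mathfrak g$ and the $Q$-orthogonality built into the definitions in~\eqref{def_funny_subsp}.
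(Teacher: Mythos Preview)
Your proof is correct and follows essentially the same route as the paper's: both use that $\mathfrak m_j,\mathfrak m_k\subset\mathfrak k'$ forces $[X,Y]\in\mathfrak k'$, and then that $\mathfrak m_i\subset\mathfrak l$ is $Q$-orthogonal to $\mathfrak k'$, so $\Delta(\mathfrak m_i,\mathfrak m_j,\mathfrak m_k)=0$. One small caveat to your closing remark: the identities in~\eqref{eq_lsum_jsum} you invoke rest on Lemma~\ref{lem_k=m}, which does use requirement~1 of Hypothesis~\ref{hyp_flag}; so while the present argument adds nothing further from that hypothesis, it is not entirely independent of it.
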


\begin{proof}
The inclusion $j,k\in J_{\mathfrak k'}$ implies that $\mathfrak m_j$ and $\mathfrak m_k$ are subspaces of the Lie algebra $\mathfrak k'$. Therefore, the map 
\begin{align*}
\mathfrak m_j\times\mathfrak m_k\ni(X,Y)\mapsto[X,Y]
\end{align*}
takes values in $\mathfrak k'$. Since $i\in J_{\mathfrak l}$, the $Q$-orthogonal projection of $\mathfrak k'$ onto $\mathfrak m_i$ equals $\{0\}$. This means the tensor $\Delta(\mathfrak m_i,\mathfrak m_j,\mathfrak m_k)$ given by~\eqref{def_DeltaBig} is the zero tensor. Thus, the assertion of the lemma holds.
\end{proof}

Fix a $Q$-orthonormal basis $(w_j)_{j=1}^{\dim\mathfrak h}$ of the Lie algebra $\mathfrak h$. Given $i=1,\ldots,s$, consider the Casimir operator $C_{\mathfrak m_i,Q|_{\mathfrak h}}:\mathfrak m_i\to\mathfrak m_i$ defined by the formula
\begin{align*}
C_{\mathfrak m_i,Q|_{\mathfrak h}}(X)=-\bigg(\sum_{j=1}^{\dim\mathfrak h}\ad w_j\circ\ad w_j\bigg)(X), \qquad X\in\mathfrak m_i.
\end{align*}
The irreducibility of $\Ad(H)|_{\mathfrak m_i}$ implies the existence of $\zeta_i\ge0$ such that
\begin{align}\label{zeta_def}
C_{\mathfrak m_i,Q|_{\mathfrak h}}(X)=\zeta_iX, \qquad X\in\mathfrak m_i.
\end{align}
Note that $\zeta_i=0$ if and only if $\Ad(H)|_{\mathfrak m_i}$ is trivial. According to~\cite[Lemma~(1.5)]{MWWZ86}, the arrays 
$(b_i)_{i=1}^s$, $([ijk])_{i,j,k=1}^s$ and $(\zeta_i)_{i=1}^s$
are related to each other by the equality 
\begin{align}\label{Casimir}
d_ib_i=2d_i\zeta_i+\sum_{j,k=1}^s[ijk].
\end{align}
The following result shows that the numbers $\eta(\mathfrak k,\mathfrak k')$ introduced in Subsection~\ref{sec_result} are well-defined and non-negative. 

\begin{lemma}\label{lem_eta_wdpos}
One has
\begin{align*}
-2\tr_QB|_{\mathfrak l}-\langle \mathfrak l\mathfrak l\mathfrak l\rangle-2\langle \mathfrak l\mathfrak j\mathfrak j\rangle&=\sum_{j\in J_{\mathfrak l}}\bigg(4d_j\zeta_j+\sum_{k,l\in J_{\mathfrak l}}[jkl]+4\sum_{k\in J_{\mathfrak k'}}\sum_{l\in J_{\mathfrak l}}[jkl]\bigg)>0, \\
-2\tr_QB|_{\mathfrak n}-2\langle\mathfrak n\mathfrak j'\mathfrak j'\rangle-\langle\mathfrak n\mathfrak n\mathfrak n\rangle
&=\sum_{j\in J_{\mathfrak k'}}\bigg(4d_j\zeta_j+\sum_{k,l\in J_{\mathfrak k'}}[jkl]\bigg)\ge0.
\end{align*}
\end{lemma}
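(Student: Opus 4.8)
The plan is to prove both displayed equalities by a direct expansion in structure constants and then deduce the two inequalities from non-negativity together with Hypothesis~\ref{hyp_flag}. First I would rewrite the left-hand sides entirely through the arrays $(d_i)$, $(b_i)$, $([ijk])$: by~\eqref{trace_bd}, $-2\tr_Q B|_{\mathfrak l}=2\sum_{j\in J_{\mathfrak l}}d_jb_j$ and $-2\tr_Q B|_{\mathfrak n}=2\sum_{j\in J_{\mathfrak k'}}d_jb_j$, while~\eqref{<ijk>[ijk]} turns $\langle\mathfrak l\mathfrak l\mathfrak l\rangle$, $\langle\mathfrak l\mathfrak j\mathfrak j\rangle$, $\langle\mathfrak n\mathfrak n\mathfrak n\rangle$, $\langle\mathfrak n\mathfrak j'\mathfrak j'\rangle$ into triple sums of $[ijk]$ over the relevant index sets. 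Feeding in the Casimir relation~\eqref{Casimir}, $d_jb_j=2d_j\zeta_j+\sum_{k,l=1}^s[jkl]$, reduces everything to $4\sum d_j\zeta_j$ plus an explicit integer combination of $\sum_{k,l}[jkl]$ and of partial sums over the index blocks. The computation then rests on one observation, which extends Lemma~\ref{lem_vanish_brak} and its proof verbatim: $[ikl]=0$ whenever one of $\mathfrak m_i,\mathfrak m_k,\mathfrak m_l$ lies inside a subalgebra $\mathfrak s$ (I only need $\mathfrak s\in\{\mathfrak k,\mathfrak k'\}$) and another lies inside $\mathfrak g\ominus\mathfrak s$, since $[\mathfrak s,\mathfrak s]\subset\mathfrak s$ kills the offending projection and $\langle\cdot\cdot\cdot\rangle$ is symmetric under permutations. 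For the first identity I fix $j\in J_{\mathfrak l}$ and split $\sum_{k,l=1}^s[jkl]$ along $\{1,\dots,s\}=J_{\mathfrak j}\sqcup J_{\mathfrak l}\sqcup J_{\mathfrak k'}$: the blocks $(J_{\mathfrak k'},J_{\mathfrak k'})$, $(J_{\mathfrak j},J_{\mathfrak l})$, $(J_{\mathfrak j},J_{\mathfrak k'})$ vanish, the $(J_{\mathfrak j},J_{\mathfrak j})$ contribution cancels against $2\langle\mathfrak l\mathfrak j\mathfrak j\rangle$, and what remains is precisely $4d_j\zeta_j+\sum_{k,l\in J_{\mathfrak l}}[jkl]+4\sum_{k\in J_{\mathfrak k'},l\in J_{\mathfrak l}}[jkl]$, summed over $j\in J_{\mathfrak l}$. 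The second identity is the same computation performed for $\mathfrak k'$ instead of $\mathfrak k$, with only the two blocks $J_{\mathfrak j'}$ and $J_{\mathfrak k'}$ present; the single cross block vanishes and the surviving term is $4d_j\zeta_j+\sum_{k,l\in J_{\mathfrak k'}}[jkl]$, summed over $j\in J_{\mathfrak k'}$.

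For the inequalities: since $\zeta_j\ge0$ and $[ijk]\ge0$ (it is a squared norm), the right-hand side of the second identity is a sum of non-negative terms, which gives ``$\ge0$''. For the strict inequality in the first identity, assume the right-hand side vanishes. Then for every $j\in J_{\mathfrak l}$ we must have $\zeta_j=0$, so $\Ad(H)$ acts trivially on $\mathfrak m_j$, and $[jkl]=0$ for all $k,l\in J_{\mathfrak l}$ and all $k\in J_{\mathfrak k'},l\in J_{\mathfrak l}$; combining this with Lemma~\ref{lem_vanish_brak} and the vanishing observation above, one gets $[jkl]=0$ for $j\in J_{\mathfrak l}$ unless $k,l\in J_{\mathfrak j}$, and, using once more the permutation symmetry of $\langle\cdot\cdot\cdot\rangle$, this unpacks to $[\mathfrak l,\mathfrak k']=\{0\}$ together with $\Ad(H)|_{\mathfrak l}$ trivial. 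Now choose any line $\mathfrak r\subset\mathfrak l$; it is automatically $\Ad(H)$-invariant, it lies in $\mathfrak g\ominus\mathfrak k'$ because $\mathfrak l=\mathfrak k\ominus\mathfrak k'\subset\mathfrak g\ominus\mathfrak k'$, and $[\mathfrak r,\mathfrak k']=\{0\}$. When $\mathfrak k'\ne\mathfrak h$ — in particular for every simple chain, the only case needed in Theorem~\ref{thm_PRC} — this contradicts requirement~2 of Hypothesis~\ref{hyp_flag} applied with $\mathfrak s=\mathfrak k'$, so the strict inequality holds. (The borderline case $\mathfrak k'=\mathfrak h$ requires a short separate argument built on the subalgebra $\mathfrak r\oplus\mathfrak h$, again via requirement~2.)

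I expect the main obstacle to be organisational rather than conceptual. The heart of the first part is the bookkeeping: being sure that exactly the claimed cross blocks vanish and that the leftover $(J_{\mathfrak j},J_{\mathfrak j})$ terms cancel against the $\langle\cdot\cdot\cdot\rangle$ terms, so that the coefficients $1$ and $4$ emerge correctly. The step I expect to be most delicate is the passage from ``the sum vanishes'' to a genuine violation of Hypothesis~\ref{hyp_flag}: one must pass from the vanishing of finitely many squared norms to the honest algebraic statement $[\mathfrak l,\mathfrak k']=\{0\}$ and then match it to the precise wording of requirement~2, taking care with the case $\mathfrak k'=\mathfrak h$.
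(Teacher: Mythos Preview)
Your approach matches the paper's almost verbatim: expand through \eqref{trace_bd}, \eqref{<ijk>[ijk]} and the Casimir relation \eqref{Casimir}, kill the cross-blocks using $[\mathfrak s,\mathfrak s]\subset\mathfrak s$ for $\mathfrak s\in\{\mathfrak k,\mathfrak k'\}$, and for the strict inequality deduce $\zeta_j=0$ (hence $d_j=1$) and $[\mathfrak m_j,\mathfrak k']=\{0\}$ for $j\in J_{\mathfrak l}$, contradicting requirement~2 of Hypothesis~\ref{hyp_flag} with $\mathfrak s=\mathfrak k'$. One wording fix: your vanishing criterion should read ``\emph{two} of $\mathfrak m_i,\mathfrak m_k,\mathfrak m_l$ lie in $\mathfrak s$ and the third lies in $\mathfrak g\ominus\mathfrak s$'' rather than ``one in, another out''---that is precisely what $[\mathfrak s,\mathfrak s]\subset\mathfrak s$ gives and precisely how you actually use it in the block decomposition.
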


\begin{proof}
Equalities~\eqref{trace_bd}, \eqref{<ijk>[ijk]} and~\eqref{Casimir}, together with Lemma~\ref{lem_vanish_brak}, yield
\begin{align}\label{eq_etai_nonz}
-2\tr_QB|_{\mathfrak l}-\langle \mathfrak l\mathfrak l\mathfrak l\rangle-2\langle \mathfrak l\mathfrak j\mathfrak j\rangle&= 2\sum_{j\in J_{\mathfrak l}}d_jb_j-\sum_{j,k,l\in J_{\mathfrak l}}[jkl]-2\sum_{j\in J_{\mathfrak l}}\sum_{k,l\in J_{\mathfrak j}}[jkl]\notag
\\
&=\sum_{j\in J_{\mathfrak l}}\bigg(4d_j\zeta_j+\sum_{k,l\in J_{\mathfrak l}}[jkl]+4\sum_{k\in J_{\mathfrak k'}}\sum_{l\in J_{\mathfrak l}}[jkl]\bigg).
\end{align}
The expression in the last line must be non-negative because the numbers $d_j$, $\zeta_j$ and $[jkl]$ are non-negative by definition. If it is~0, then $\zeta_j=0$ for every $j\in J_{\mathfrak l}$. Consequently, the representation $\Ad(H)|_{\mathfrak m_j}$ is trivial for every such~$j$. Since $\Ad(H)|_{\mathfrak m_j}$ is also irreducible, this means $d_j=1$.
Moreover, in view of Lemma~\ref{lem_vanish_brak}, if the expression in the last line of~\eqref{eq_etai_nonz} is~0, then
\begin{align*}
[jkl]=0,\qquad j\in J_{\mathfrak l},~k,l\in J_{\mathfrak k}.
\end{align*}
This implies
\begin{align*}
[\mathfrak m_j,\mathfrak k']=\{0\},\qquad j\in J_{\mathfrak l}.
\end{align*}
However, the commutation $[\mathfrak m_j,\mathfrak k']$ must be non-trivial by requirement~2 of Hypothesis~\ref{hyp_flag}. Thus, the expression in the last line of~\eqref{eq_etai_nonz} cannot be~0, and the first formula in the statement of the lemma holds.

Next, we use~\eqref{trace_bd}, \eqref{<ijk>[ijk]}, \eqref{Casimir} and Lemma~\ref{lem_vanish_brak} again to compute
\begin{align*}
-2\tr_QB|_{\mathfrak n}&-2\langle\mathfrak n\mathfrak j'\mathfrak j'\rangle-\langle\mathfrak n\mathfrak n\mathfrak n\rangle
\\
&=2\sum_{j\in J_{\mathfrak k'}}d_jb_j-2\sum_{j\in J_{\mathfrak k'}}\sum_{k,l\in J_{\mathfrak j'}}[jkl]-\sum_{j,k,l\in J_{\mathfrak k'}}[jkl]=\sum_{j\in J_{\mathfrak k'}}\bigg(4d_j\zeta_j+\sum_{k,l\in J_{\mathfrak k'}}[jkl]\bigg)\ge0.
\end{align*}
\end{proof}

\begin{remark}\label{rem_denom_0}
If $\mathfrak g$ had a Lie subalgebra $\mathfrak s$ containing $\mathfrak h$ as a proper subset and satisfying the first requirement of Hypothesis~\ref{hyp_flag} but not the second, then the formulation of Theorem~\ref{thm_PRC} would become meaningless. Indeed, in this case, it would be possible to find an $\Ad(H)$-invariant 1-dimensional subspace $\mathfrak r$ of $\mathfrak g\ominus\mathfrak s$ such that $[\mathfrak r,\mathfrak s]=\{0\}$. By Remark~\ref{rem_1d_alg},
\begin{align*}
\mathfrak g\supset\mathfrak r\oplus\mathfrak s\supset\mathfrak s\supset\mathfrak h
\end{align*}
would be a simple chain. However, employing~\eqref{eq_etai_nonz}, we would be able to demonstrate that $\eta(\mathfrak r\oplus\mathfrak s,\mathfrak s)$ is not well-defined.
\end{remark}

The following result provides insight into the nature of the numbers~$\eta(\mathfrak k,\mathfrak k')$. It will help us establish a corollary of Theorem~\ref{thm_PRC} in Subsection~\ref{subsec_cor}.

\begin{proposition}\label{lem_eta0}
Assume~(\ref{flag}) is a simple chain. The number $\eta(\mathfrak k,\mathfrak k')$ is~0 if and only if the Lie algebra $\mathfrak k'$ is isomorphic to the direct sum of $\mathbb R$ and~$\mathfrak h$.
\end{proposition}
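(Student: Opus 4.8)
The plan is to show that $\eta(\mathfrak k,\mathfrak k')$ vanishes exactly when $\mathfrak n=\mathfrak k'\ominus\mathfrak h$ is a $1$-dimensional central ideal of $\mathfrak k'$, and then to recognise this as the condition $\mathfrak k'\cong\mathbb R\oplus\mathfrak h$. Throughout, note that by Lemma~\ref{lem_eta_wdpos} the denominator of~\eqref{def_etai} is strictly negative, so the whole question is about the numerator.

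First I would reduce to the numerator. By the second identity in Lemma~\ref{lem_eta_wdpos}, the numerator $2\tr_QB|_{\mathfrak n}+2\langle\mathfrak n\mathfrak j'\mathfrak j'\rangle+\langle\mathfrak n\mathfrak n\mathfrak n\rangle$ equals $-\sum_{j\in J_{\mathfrak k'}}\big(4d_j\zeta_j+\sum_{k,l\in J_{\mathfrak k'}}[jkl]\big)$, a sum of non-positive terms. Hence $\eta(\mathfrak k,\mathfrak k')=0$ precisely when $\zeta_j=0$ for all $j\in J_{\mathfrak k'}$ and $[jkl]=0$ for all $j,k,l\in J_{\mathfrak k'}$. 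Next I would translate this into Lie-bracket language. The first family of conditions says $\Ad(H)|_{\mathfrak m_j}$ is trivial, i.e.\ $[\mathfrak h,\mathfrak m_j]=\{0\}$, for each $j\in J_{\mathfrak k'}$, which by~\eqref{eq_lsum_jsum} amounts to $[\mathfrak h,\mathfrak n]=\{0\}$. By~\eqref{<ijk>[ijk]} the second family amounts to $\langle\mathfrak n\mathfrak n\mathfrak n\rangle=0$, i.e.\ $\pi_{\mathfrak n}[\mathfrak n,\mathfrak n]=\{0\}$. Since $\mathfrak k'$ is a subalgebra, $[\mathfrak n,\mathfrak n]\subset\mathfrak k'=\mathfrak n\oplus\mathfrak h$, so this forces $[\mathfrak n,\mathfrak n]\subset\mathfrak h$; combining with $[\mathfrak h,\mathfrak n]=\{0\}$ and the fact that $Q$ comes from a bi-invariant metric gives $Q([X,Y],W)=-Q(Y,[X,W])=0$ for $X,Y\in\mathfrak n$, $W\in\mathfrak h$, whence $[\mathfrak n,\mathfrak n]=\{0\}$ because $Q|_{\mathfrak h}$ is positive definite. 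The converse implication is immediate, so $\eta(\mathfrak k,\mathfrak k')=0$ if and only if $[\mathfrak h,\mathfrak n]=[\mathfrak n,\mathfrak n]=\{0\}$.

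Then I would match $[\mathfrak h,\mathfrak n]=[\mathfrak n,\mathfrak n]=\{0\}$ with $\mathfrak k'\cong\mathbb R\oplus\mathfrak h$. For the easy direction, $\mathfrak k'\cong\mathbb R\oplus\mathfrak h$ forces $\dim\mathfrak n=\dim\mathfrak k'-\dim\mathfrak h=1$, so $[\mathfrak n,\mathfrak n]=\{0\}$, while $[\mathfrak h,\mathfrak n]=\{0\}$ because $\mathfrak n$ is $\Ad(H)$-invariant, so each $\ad(W)|_{\mathfrak n}$ with $W\in\mathfrak h$ is a $Q|_{\mathfrak n}$-skew-adjoint endomorphism of a line, hence $0$. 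For the converse, assume $[\mathfrak h,\mathfrak n]=[\mathfrak n,\mathfrak n]=\{0\}$; the crux is to rule out $\dim\mathfrak n\ge2$. If $\dim\mathfrak n\ge2$, choose a line $\mathfrak r'\subset\mathfrak n$ and put $\mathfrak s=\mathfrak h\oplus\mathfrak r'$: since $[\mathfrak h,\mathfrak r']\subset[\mathfrak h,\mathfrak n]=\{0\}$ and $[\mathfrak r',\mathfrak r']=\{0\}$, this $\mathfrak s$ is a Lie subalgebra with $\mathfrak h\subsetneq\mathfrak s$. Picking a line $\mathfrak r\subset\mathfrak n\ominus\mathfrak r'$ — nonzero, and $\Ad(H)$-invariant because $\Ad(H)$ acts trivially on $\mathfrak n$ — we get $\mathfrak r\subset\mathfrak g\ominus\mathfrak s$ with $[\mathfrak r,\mathfrak s]\subset[\mathfrak n,\mathfrak h]+[\mathfrak n,\mathfrak n]=\{0\}$, contradicting requirement~2 of Hypothesis~\ref{hyp_flag} for $\mathfrak s$. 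Hence $\dim\mathfrak n=1$; then $\mathfrak n$ is a central ideal of $\mathfrak k'$ and $\mathfrak h$ is an ideal (because $[\mathfrak k',\mathfrak h]=[\mathfrak n,\mathfrak h]+[\mathfrak h,\mathfrak h]\subset\mathfrak h$), so $\mathfrak k'=\mathfrak n\oplus\mathfrak h\cong\mathbb R\oplus\mathfrak h$.

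The main obstacle is precisely this last step. Algebraically, $\eta(\mathfrak k,\mathfrak k')=0$ only forces $\mathfrak n$ to be abelian and central in $\mathfrak k'$, which would still hold, say, for $\mathfrak k'\cong\mathbb R^{\dim\mathfrak n}\oplus\mathfrak h$ with $\dim\mathfrak n>1$; what excludes this is feeding the auxiliary subalgebra $\mathfrak h\oplus\mathfrak r'$ — not $\mathfrak k'$ itself — into Hypothesis~\ref{hyp_flag}. Verifying that $\mathfrak h\oplus\mathfrak r'$ is genuinely a Lie subalgebra strictly containing $\mathfrak h$, and that the test line $\mathfrak r$ is $\Ad(H)$-invariant and lies in $\mathfrak g\ominus\mathfrak s$, is the delicate bookkeeping. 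The remainder is a routine application of Lemma~\ref{lem_eta_wdpos} together with the $Q$-orthogonal decomposition~\eqref{eq_lsum_jsum}.
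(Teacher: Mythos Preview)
Your argument is correct. Both you and the paper start identically, using Lemma~\ref{lem_eta_wdpos} to reduce $\eta(\mathfrak k,\mathfrak k')=0$ to the vanishing of $\zeta_j$ (and the $[jkl]$) for $j\in J_{\mathfrak k'}$, and both handle the ``if'' direction by the same skew-adjointness trick on a line. The divergence is in the ``only if'' direction, where you must force $\dim\mathfrak n=1$.

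The paper notes that each $\mathfrak m_j$ with $j\in J_{\mathfrak k'}$ is $1$-dimensional (trivial plus irreducible), fixes one index $i$, forms $\mathfrak k''=\mathfrak m_i\oplus\mathfrak h$, and then invokes \emph{requirement~1} of Hypothesis~\ref{hyp_flag} for $\mathfrak k''$: a second index $k\ne i$ in $J_{\mathfrak k'}$ would give equivalent trivial summands $\mathfrak m_i\subset\mathfrak k''\ominus\mathfrak h$ and $\mathfrak m_k\subset\mathfrak g\ominus\mathfrak k''$, a contradiction. You instead extract the stronger conclusion $[\mathfrak h,\mathfrak n]=[\mathfrak n,\mathfrak n]=\{0\}$, build the same kind of auxiliary subalgebra $\mathfrak s=\mathfrak h\oplus\mathfrak r'$, and contradict \emph{requirement~2} via a commuting test line $\mathfrak r\subset\mathfrak n\ominus\mathfrak r'$. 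So the two proofs feed essentially the same intermediate subalgebra into different halves of Hypothesis~\ref{hyp_flag}. The paper's route stays inside the irreducible decomposition~\eqref{m_decomp} and never needs to argue that $[\mathfrak n,\mathfrak n]=\{0\}$; your route is a bit more coordinate-free and makes explicit that the vanishing numerator forces $\mathfrak n$ to be central and abelian in $\mathfrak k'$, which is conceptually pleasant and dovetails with Remark~\ref{rem_denom_0}.
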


\begin{proof}
Assume $\eta(\mathfrak k,\mathfrak k')=0$. This means the numerator in~\eqref{def_etai} must be~0. Therefore, in view of Lemma~\ref{lem_eta_wdpos}, 
\begin{align*}
\sum_{j\in J_{\mathfrak k'}}\bigg(4d_j\zeta_j+\sum_{k,l\in J_{\mathfrak k'}}[jkl]\bigg)=0.
\end{align*}
Since the numbers $d_j$, $\zeta_j$ and $[jkl]$ are all non-negative, $\zeta_j=0$ for all $j\in J_{\mathfrak k'}$. As a consequence, the representation $\Ad(H)|_{\mathfrak m_j}$ is trivial for such~$j$. We will use this fact to prove that $\mathfrak k'$ is isomorphic to the direct sum of $\mathbb R$ and~$\mathfrak h$.

Fix $i\in J_{\mathfrak k'}$. The irreducibility of $\Ad(H)|_{\mathfrak m_i}$ implies that the dimension $d_i$ equals~1. Consequently, 
\begin{align*}
\mathfrak k''=\mathfrak m_i\oplus\mathfrak h
\end{align*}
is a Lie subalgebra of $\mathfrak k'$. Our next step is to show that $\mathfrak k''$ is, in fact, equal to~$\mathfrak k'$.

Choose $k\in J_{\mathfrak k'}$. The dimension of $\mathfrak m_k$ is~1. Because the representations $\Ad(H)|_{\mathfrak m_i}$ and $\Ad(H)|_{\mathfrak m_k}$ are both trivial, they are equivalent. Clearly, $\mathfrak m_i$ coincides with $\mathfrak k''\ominus\mathfrak h$. If $k\ne i$, then $\mathfrak m_k$ must lie in $\mathfrak g\ominus\mathfrak k''$. However, this means $\mathfrak k''$ does not meet requirement~1 of Hypothesis~\ref{hyp_flag}. Thus, $i$ is the only element in~$J_{\mathfrak k'}$. We conclude that $\mathfrak k''$ equals $\mathfrak k'$. It is clear that $\mathfrak k''$ is isomorphic to the direct sum of $\mathbb R$ and~$\mathfrak h$. This proves the ``only if" portion of the lemma. Next, we turn to the converse statement.

Assume $\mathfrak k'$ is isomorphic to the direct sum of $\mathbb R$ and~$\mathfrak h$. Let us show that $\eta(\mathfrak k,\mathfrak k')=0$. According to~\eqref{def_etai} and Lemma~\ref{lem_eta_wdpos}, 
\begin{align}\label{eq_eta_zetas_braks}
\eta(\mathfrak k,\mathfrak k')=\frac{\sum_{j\in J_{\mathfrak k'}}\big(4d_j\zeta_j+\sum_{k,l\in J_{\mathfrak k'}}[jkl]\big)}{\omega(\mathfrak n)\sum_{j\in J_{\mathfrak l}}\big(4d_j\zeta_j+\sum_{k,l\in J_{\mathfrak l}}[jkl]+4\sum_{k\in J_{\mathfrak k'}}\sum_{l\in J_{\mathfrak l}}[jkl]\big)}.
\end{align}
The proof will be complete if we demonstrate that the numerator is~0.

Lemma~\ref{lem_k=m} and the existence of an isomorphism between $\mathfrak k'$ and the direct sum of $\mathbb R$ and~$\mathfrak h$ imply that
\begin{align*}
\mathfrak k'=\mathfrak m_i\oplus\mathfrak h
\end{align*}
for some $i=1,\ldots,s$. Moreover, the dimension of $\mathfrak m_i$ is~1. Consequently, $J_{\mathfrak k'}$ is the set~$\{i\}$, and
\begin{align*}
\sum_{j,k,l\in J_{\mathfrak k'}}[jkl]=[iii]=0.
\end{align*}
This formula implies that the numerator on the right-hand side of~\eqref{eq_eta_zetas_braks} equals
\begin{align*}
4\sum_{j\in J_{\mathfrak k'}}d_j\zeta_j=4d_i\zeta_i.
\end{align*}
The proof will be complete if we demonstrate that $\zeta_i=0$. It suffices to show that the representation $\Ad(H)|_{\mathfrak m_i}$ is trivial.

Choose a nonzero $X\in\mathfrak m_i$ and some $Y\in\mathfrak h$. Since $\mathfrak m_i$ is $\Ad(H)$-invariant and 1-dimensional,
the commutator $[X,Y]$ equals $\tau X$ for some $\tau\in\mathbb R$. The fact that $Q$ is induced by a bi-invariant metric on $G$ implies
\begin{align*}
\tau=\frac{Q([X,Y],X)}{Q(X,X)}=-\frac{Q([X,X],Y)}{Q(X,X)}=0.
\end{align*}
Thus, $[X,Y]$ vanishes for $X\in\mathfrak m_i$ and $Y\in\mathfrak h$, which means $\Ad(H)|_{\mathfrak m_i}$ is trivial.
\end{proof}

\subsection{The scalar curvature and related functionals}\label{sec_proofs}

The proof of Theorem~\ref{thm_PRC} relies on the analysis of two functionals related to the scalar curvature of metrics in~$\mathcal M$. Let us introduce the first of these functionals. Suppose $g$ is an $\Ad(H)$-invariant scalar product on an $\Ad(H)$-invariant subspace $\mathfrak u\subset\mathfrak m$. Define
\begin{align}\label{scal_def}
S(g)=-\frac12\tr_gB|_{\mathfrak u}-\frac14
|\Delta(\mathfrak u,\mathfrak u,\mathfrak u)|_g^2.
\end{align}
In this formula, $\Delta(\mathfrak u,\mathfrak u,\mathfrak u)$ is given by~\eqref{def_DeltaBig}, and $|\cdot|_g$ is the norm on $\mathfrak u\otimes\mathfrak u^*\otimes\mathfrak u^*$ induced by~$g$. If $\mathfrak u=\mathfrak m$, then we identify $g$ with a Riemannian metric in~$\mathcal M$. The quantity on the right-hand side of~\eqref{scal_def} is then equal to the scalar curvature of this metric; see, e.g.,~\cite[Corollary~7.39]{AB87}. Thus, the notation~\eqref{scal_def} is consistent with the notation introduced in the beginning of Subsection~\ref{subsec_nonstand_nota}. The following result provides a handy formula for $S(g)$; cf.~\cite[\S1]{MWWZ86}, \cite[Section~1]{JSPYS97} and~\cite[Section~3]{APYRsubm}.

\begin{lemma}\label{lem_sc_comp}
Let $\mathfrak u$ satisfy the first equality in~(\ref{eq_uvw_sum}) for some $\mathcal J_{\mathfrak u}\subset\{1,\ldots,s\}$. Suppose the scalar product $g$ and the decomposition~(\ref{m_decomp}) are such that 
\begin{align*}
g=\sum_{i\in\mathcal J_{\mathfrak u}}x_i\pi_{\mathfrak m_i}^*Q,\qquad x_i>0.
\end{align*}
Then
\begin{align}\label{eq_sc_comp}
\tr_gB|_{\mathfrak u}&=-\sum_{i\in \mathcal J_{\mathfrak u}}\frac{d_ib_i}{x_i}, \qquad
|\Delta(\mathfrak u,\mathfrak u,\mathfrak u)|_g^2=\sum_{i,j,k\in\mathcal J_{\mathfrak u}}[ijk]\frac{x_k}{x_ix_j}, \notag
\\
S(g)&=\frac12\sum_{i\in\mathcal J_{\mathfrak u}}\frac{d_ib_i}{x_i}-\frac14\sum_{i,j,k\in\mathcal J_{\mathfrak u}}[ijk]\frac{x_k}{x_ix_j}.
\end{align}
\end{lemma}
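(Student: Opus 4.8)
The plan is to verify all three identities in~\eqref{eq_sc_comp} by a direct computation in a $Q$-orthonormal basis of $\mathfrak u$ adapted to the decomposition~\eqref{m_decomp}. Concretely, I would fix for each $i\in\mathcal J_{\mathfrak u}$ a $Q|_{\mathfrak m_i}$-orthonormal basis $(e^{(i)}_\alpha)_{\alpha=1}^{d_i}$ of $\mathfrak m_i$; the union of these is then a $Q|_{\mathfrak u}$-orthonormal basis of $\mathfrak u$, and rescaling gives the $g$-orthonormal basis $(x_i^{-1/2}e^{(i)}_\alpha)$, since $g=\sum_{i\in\mathcal J_{\mathfrak u}}x_i\pi_{\mathfrak m_i}^*Q$. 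This reduces everything to bookkeeping with the weights $x_i$.

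For the trace, one has $\tr_gB|_{\mathfrak u}=\sum_{i\in\mathcal J_{\mathfrak u}}\sum_{\alpha=1}^{d_i}B(x_i^{-1/2}e^{(i)}_\alpha,x_i^{-1/2}e^{(i)}_\alpha)=\sum_{i\in\mathcal J_{\mathfrak u}}x_i^{-1}\sum_{\alpha=1}^{d_i}B(e^{(i)}_\alpha,e^{(i)}_\alpha)$, and $B(e^{(i)}_\alpha,e^{(i)}_\alpha)=-b_i$ by~\eqref{b_def}, so summing over the $d_i$ basis vectors gives the first formula. For $|\Delta(\mathfrak u,\mathfrak u,\mathfrak u)|_g^2$, I would view $\Delta(\mathfrak u,\mathfrak u,\mathfrak u)$ as the bilinear map $(X,Y)\mapsto\pi_{\mathfrak u}[X,Y]$ and write $|\Delta(\mathfrak u,\mathfrak u,\mathfrak u)|_g^2=\sum_{i,j\in\mathcal J_{\mathfrak u}}\sum_{\alpha,\beta}\big|\pi_{\mathfrak u}[x_i^{-1/2}e^{(i)}_\alpha,x_j^{-1/2}e^{(j)}_\beta]\big|_g^2$, where $|\cdot|_g$ denotes the $g$-norm on $\mathfrak u$. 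The decisive bit of accounting is that pulling the scalars out of the bracket contributes $x_i^{-1}x_j^{-1}$, whereas expanding $\pi_{\mathfrak u}[e^{(i)}_\alpha,e^{(j)}_\beta]=\sum_{k\in\mathcal J_{\mathfrak u}}\pi_{\mathfrak m_k}[e^{(i)}_\alpha,e^{(j)}_\beta]$ and using $|v|_g^2=\sum_{k\in\mathcal J_{\mathfrak u}}x_k|\pi_{\mathfrak m_k}v|_Q^2$ for $v\in\mathfrak u$ contributes $x_k$ in the numerator; reorganising the resulting triple sum and recognising $\sum_{\alpha,\beta}|\pi_{\mathfrak m_k}[e^{(i)}_\alpha,e^{(j)}_\beta]|_Q^2=\langle\mathfrak m_k\mathfrak m_i\mathfrak m_j\rangle=[ijk]$ (by the definition of $\langle\cdot\,\cdot\,\cdot\rangle$ and its full symmetry, valid because $Q$ comes from a bi-invariant metric) yields the second formula.

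Substituting the two identities just obtained into the definition~\eqref{scal_def} of $S(g)$ immediately produces the third formula. I expect the only real obstacle to be the index-and-weight accounting in the computation of $|\Delta(\mathfrak u,\mathfrak u,\mathfrak u)|_g^2$: one must keep track of which of the three $\mathfrak m$-factors carries $x_k$ and which carry $x_i$ and $x_j$, a distinction forced by $\Delta(\mathfrak u,\mathfrak u,\mathfrak u)$ lying in $\mathfrak u\otimes\mathfrak u^*\otimes\mathfrak u^*$ rather than in $\mathfrak u^*\otimes\mathfrak u^*\otimes\mathfrak u^*$. Everything else is a routine verification, and the argument works verbatim in the special case $\mathfrak u=\mathfrak m$, for which $S(g)$ is the scalar curvature of the corresponding metric in $\mathcal M$.
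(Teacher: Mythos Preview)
Your proposal is correct and follows essentially the same approach as the paper: both choose a $Q$-orthonormal basis of $\mathfrak u$ adapted to the decomposition~\eqref{m_decomp}, rescale it by $x_i^{-1/2}$ on each $\mathfrak m_i$ to obtain a $g$-orthonormal basis, and then compute the trace and the squared norm of $\Delta(\mathfrak u,\mathfrak u,\mathfrak u)$ directly. The only difference is notational (your double index $(i,\alpha)$ versus the paper's single index with the map $\iota$), and your identification of the $x_k/(x_ix_j)$ weighting and of $\sum_{\alpha,\beta}|\pi_{\mathfrak m_k}[e^{(i)}_\alpha,e^{(j)}_\beta]|_Q^2$ with $[ijk]$ is exactly what the paper does.
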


\begin{proof}
Let $(e_i)_{i=1}^n$ be a $Q$-orthonormal basis of $\mathfrak m$ adapted to the decomposition~\eqref{m_decomp}. For every $i=1,\ldots,n$, define $\tilde e_i=\frac1{\sqrt{x_{\iota(i)}}}e_i$, where $\iota(i)$ is the number between 1 and $s$ such that $e_i$ lies in $\mathfrak m_{\iota(i)}$. Then $(\tilde e_i)_{i=1}^n$ is a $g$-orthonormal basis of $\mathfrak m$. We compute
\begin{align*}
\tr_gB|_{\mathfrak u}&=\sum_{i\in\Gamma(\mathfrak u)}B(\tilde e_i,\tilde e_i)=\sum_{i\in\Gamma(\mathfrak u)}\frac1{x_{\iota(i)}}B(e_i,e_i)=-\sum_{i\in\mathcal J_{\mathfrak u}}\frac{d_ib_i}{x_i},
\\
|\Delta(\mathfrak u,\mathfrak u,\mathfrak u)|_g^2&=\sum_{i,j\in\Gamma(\mathfrak u)}g(\Delta(\mathfrak u,\mathfrak u,\mathfrak u)(\tilde e_i,\tilde e_j),\Delta(\mathfrak u,\mathfrak u,\mathfrak u)(\tilde e_i,\tilde e_j)) 
\\ 
&=\sum_{i,j\in\Gamma(\mathfrak u)}\sum_{k\in\mathcal J_{\mathfrak u}}\frac{x_k}{x_{\iota(i)}x_{\iota(j)}}Q(\Delta(\mathfrak m_k,\mathfrak u,\mathfrak u)(e_i,e_j),\Delta(\mathfrak m_k,\mathfrak u,\mathfrak u)(e_i,e_j))
\\
&=\sum_{i,j,k\in\mathcal J_{\mathfrak u}}[ijk]\frac{x_k}{x_ix_j}.
\end{align*}
In the first three lines,
\begin{align*}
\Gamma(\mathfrak u)=\{i\in[1,n]\cap\mathbb N\,|\,e_i\in\mathfrak u\}=\{i\in[1,n]\cap\mathbb N\,|\,\iota(i)\in\mathcal J_{\mathfrak u}\}.
\end{align*}
The last formula in~\eqref{eq_sc_comp} follows from the definition of~$S$.
\end{proof}

Let us introduce one more functional related to the scalar curvature of metrics in~$\mathcal M$. As in Subsection~\ref{sec_lemmas}, we consider distinct Lie subalgebras $\mathfrak k$ and $\mathfrak k'$ of $\mathfrak g$ such that $\mathfrak h\subset\mathfrak k'\subset\mathfrak k$. The spaces $\mathfrak j$, $\mathfrak j'$, $\mathfrak l$ and $\mathfrak n$ are given by~\eqref{def_funny_subsp}. The sets $J_\mathfrak k$, $J_{\mathfrak k'}$, $J_{\mathfrak j}$, $J_{\mathfrak j'}$ and $J_{\mathfrak l}$ appearing below are introduced in Lemma~\ref{lem_k=m} and after Corollary~\ref{cor_finite_subalg}.

Denote by $\mathcal M(\mathfrak k)$ the space of $\Ad(H)$-invariant scalar products on $\mathfrak k\ominus\mathfrak h$. There is a natural identification between $\mathcal M(\mathfrak g)$ and $\mathcal M$. In what follows, we assume $\mathcal M(\mathfrak k)$ is equipped with the topology inherited from the second tensor power of~$(\mathfrak k\ominus\mathfrak h)^*$. If $g$ lies in $\mathcal M(\mathfrak k)$, set
\begin{align*}
\hat S(g)=S(g)-\frac12|\Delta(\mathfrak j,\mathfrak k\ominus\mathfrak h,\mathfrak j)|_{QgQ}^2.
\end{align*}
The notation $|\cdot|_{QgQ}$ stands for the norm on $\mathfrak j\otimes(\mathfrak k\ominus\mathfrak h)^*\otimes\mathfrak j^*$ induced by $Q|_{\mathfrak j}$ and $g|_{\mathfrak k\ominus\mathfrak h}$. One can easily verify that $\hat S$ is a continuous map from $\mathcal M(\mathfrak k)$ to~$\mathbb R$. If $g$ lies in $\mathcal M(\mathfrak g)$, then $\hat S(g)$ equals $S(g)$.

\begin{lemma}\label{lem_hat_comp}
Suppose the scalar product $g\in\mathcal M(\mathfrak k)$ and the decomposition~(\ref{m_decomp}) are such that 
\begin{align}\label{g_diag_decm}
g=\sum_{i\in J_{\mathfrak k}}x_i\pi_{\mathfrak m_i}^*Q, \qquad x_i>0.
\end{align}
Then
\begin{align}\label{eq_hat_comp}
\hat S(g)&=\frac12\sum_{i\in J_{\mathfrak k}}\frac{d_ib_i}{x_i}-\frac12\sum_{i\in J_{\mathfrak k}}\sum_{j,k\in J_{\mathfrak j}}\frac{[ijk]}{x_i}-\frac14\sum_{i,j,k\in J_{\mathfrak k}}[ijk]\frac{x_k}{x_ix_j}.
\end{align}
\end{lemma}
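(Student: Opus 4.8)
The plan is to reduce the computation to Lemma~\ref{lem_sc_comp} and then carefully bookkeep which triples of indices $[ijk]$ survive. By Lemma~\ref{lem_k=m}, the space $\mathfrak k\ominus\mathfrak h$ decomposes as $\bigoplus_{i\in J_{\mathfrak k}}\mathfrak m_i$, and $\mathfrak j=\mathfrak g\ominus\mathfrak k=\bigoplus_{i\in J_{\mathfrak j}}\mathfrak m_i$ with $J_{\mathfrak j}=J_{\mathfrak g}\setminus J_{\mathfrak k}$ by~\eqref{def_diff_Js}. First I would apply Lemma~\ref{lem_sc_comp} with $\mathfrak u=\mathfrak k\ominus\mathfrak h$ and $\mathcal J_{\mathfrak u}=J_{\mathfrak k}$, using the given diagonal form~\eqref{g_diag_decm} of $g$, to get
\begin{align*}
S(g)=\frac12\sum_{i\in J_{\mathfrak k}}\frac{d_ib_i}{x_i}-\frac14\sum_{i,j,k\in J_{\mathfrak k}}[ijk]\frac{x_k}{x_ix_j}.
\end{align*}
This handles the first and third terms on the right-hand side of~\eqref{eq_hat_comp} at once, so the remaining task is purely to identify $\frac12|\Delta(\mathfrak j,\mathfrak k\ominus\mathfrak h,\mathfrak j)|_{QgQ}^2$ with $\frac12\sum_{i\in J_{\mathfrak k}}\sum_{j,k\in J_{\mathfrak j}}\frac{[ijk]}{x_i}$.

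The second step is the explicit computation of that mixed norm. I would fix a $Q$-orthonormal basis $(e_i)_{i=1}^n$ of $\mathfrak m$ adapted to~\eqref{m_decomp}, as in the proof of Lemma~\ref{lem_sc_comp}, and note that the norm $|\cdot|_{QgQ}$ on $\mathfrak j\otimes(\mathfrak k\ominus\mathfrak h)^*\otimes\mathfrak j^*$ is induced by $Q$ on the first and third factors and by $g$ on the middle factor. Thus the middle index contributes a factor $\frac1{x_{\iota}}$ for $e_\iota\in\mathfrak m_\iota$ with $\iota\in J_{\mathfrak k}$, while the two $\mathfrak j$-indices contribute factors of $1$ since $Q$-orthonormal vectors are used there. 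Expanding $|\Delta(\mathfrak j,\mathfrak k\ominus\mathfrak h,\mathfrak j)|_{QgQ}^2$ as a sum over the three index ranges and using $Q$-bilinearity to split $\Delta(\mathfrak j,\mathfrak k\ominus\mathfrak h,\mathfrak j)=\sum_{k\in J_{\mathfrak j}}\Delta(\mathfrak m_k,\mathfrak k\ominus\mathfrak h,\mathfrak j)$ (and similarly over the other two factors), I get
\begin{align*}
|\Delta(\mathfrak j,\mathfrak k\ominus\mathfrak h,\mathfrak j)|_{QgQ}^2=\sum_{i\in J_{\mathfrak k}}\sum_{j,k\in J_{\mathfrak j}}\frac{\langle\mathfrak m_j\mathfrak m_i\mathfrak m_k\rangle}{x_i}=\sum_{i\in J_{\mathfrak k}}\sum_{j,k\in J_{\mathfrak j}}\frac{[ijk]}{x_i},
\end{align*}
where the last equality uses the full symmetry of $[ijk]$ in its indices (noted right after the definition of the structure constants) together with~\eqref{<ijk>[ijk]}. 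Combining this with the formula for $S(g)$ and the definition $\hat S(g)=S(g)-\frac12|\Delta(\mathfrak j,\mathfrak k\ominus\mathfrak h,\mathfrak j)|_{QgQ}^2$ yields~\eqref{eq_hat_comp}.

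The only genuinely delicate point — and the step I would present most carefully — is the correct accounting of the scaling factors in the mixed norm $|\cdot|_{QgQ}$: one must be sure that only the middle slot (the one corresponding to $\mathfrak k\ominus\mathfrak h$) is rescaled by $g$, contributing $x_i^{-1}$, while the outer two slots, corresponding to copies of $\mathfrak j$, are measured with $Q$ and contribute nothing. Since $\Delta(\mathfrak j,\mathfrak k\ominus\mathfrak h,\mathfrak j)(X,Y)=\pi_{\mathfrak j}[X,Y]$ for $X\in\mathfrak k\ominus\mathfrak h$ and $Y\in\mathfrak j$, the value lands in $\mathfrak j$; so in the basis expansion the summand indexed by $(i,j,k)$ with $i\in J_{\mathfrak k}$ and $j,k\in J_{\mathfrak j}$ is exactly $x_i^{-1}Q\big(\pi_{\mathfrak m_j}[\text{unit vector in }\mathfrak m_i,\text{unit vector in }\mathfrak m_k],\pi_{\mathfrak m_j}[\cdots]\big)$ summed appropriately, which assembles to $x_i^{-1}[ijk]$ by the definition of $\langle\mathfrak m_j\mathfrak m_i\mathfrak m_k\rangle$. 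Everything else is a routine rearrangement. I do not anticipate any obstacle beyond this bookkeeping; there is no need to invoke Hypothesis~\ref{hyp_flag} here, since the lemma is a purely computational identity valid for any diagonal $g$.
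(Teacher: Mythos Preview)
Your proof is correct and follows essentially the same approach as the paper: apply Lemma~\ref{lem_sc_comp} to obtain $S(g)$, then compute $|\Delta(\mathfrak j,\mathfrak k\ominus\mathfrak h,\mathfrak j)|_{QgQ}^2$ directly in a $Q$-orthonormal basis adapted to~\eqref{m_decomp}, noting that only the middle slot picks up the factor $x_i^{-1}$. The paper's proof is slightly terser but structurally identical.
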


\begin{proof}
As in the proof of Lemma~\ref{lem_sc_comp}, we choose a $Q$-orthonormal basis $(e_i)_{i=1}^n$ of $\mathfrak m$ adapted to the decomposition~\eqref{m_decomp}. For every $i=1,\ldots,n$, the vector $\tilde e_i$ is defined as $\frac1{\sqrt{x_{\iota(i)}}}e_i$, where $\iota(i)$ is such that $e_i\in\mathfrak m_{\iota(i)}$. To establish~\eqref{eq_hat_comp}, it suffices to take note of~\eqref{eq_sc_comp} and observe that
\begin{align*}
|\Delta(\mathfrak j,\mathfrak k\ominus\mathfrak h,\mathfrak j)|_{QgQ}^2
&=\sum_{i\in\Gamma(\mathfrak k)}\sum_{j\in\Gamma(\mathfrak j)}Q(\Delta(\mathfrak j,\mathfrak k\ominus\mathfrak h,\mathfrak j)(\tilde e_i,e_j),\Delta(\mathfrak j,\mathfrak k\ominus\mathfrak h,\mathfrak j)(\tilde e_i,e_j))
\\
&=\sum_{i\in\Gamma(\mathfrak k)}\sum_{j\in\Gamma(\mathfrak j)}\frac1{x_{\iota(i)}}Q(\Delta(\mathfrak j,\mathfrak k\ominus\mathfrak h,\mathfrak j)(e_i,e_j),\Delta(\mathfrak j,\mathfrak k\ominus\mathfrak h,\mathfrak j)(e_i,e_j))
\\
&=\sum_{i\in J_{\mathfrak k}}\sum_{j,k\in J_{\mathfrak j}}\frac{[ijk]}{x_i}.
\end{align*}
In the first two lines,
\begin{align*}
\Gamma(\mathfrak k)&=\{i\in[1,n]\cap\mathbb N\,|\,e_i\in\mathfrak k\ominus\mathfrak h\}=\{i\in[1,n]\cap\mathbb N\,|\,\iota(i)\in J_{\mathfrak k}\}.
\\
\Gamma(\mathfrak j)&=\{i\in[1,n]\cap\mathbb N\,|\,e_i\in\mathfrak j\}=\{i\in[1,n]\cap\mathbb N\,|\,\iota(i)\in J_{\mathfrak j}\}.
\end{align*}
\end{proof}

The following estimate for $S$ was essentially proven in~\cite{AP16}. Recall that the notation $\lambda_-(R')$ and $\lambda_+(R')$, where $R'$ is a bilinear form on a nonzero subspace of~$\mathfrak m$, was introduced by~\eqref{lambda_pm_def}.

\begin{lemma}\label{lem_est_max}
Suppose $\mathfrak h$ is a maximal Lie subalgebra of $\mathfrak k$. Given $g\in\mathcal M(\mathfrak k)$ and $\tau_1,\tau_2>0$, assume that
\begin{align*}
\lambda_-(g)\le\tau_1,\qquad \lambda_+(g)\ge\tau_2.
\end{align*}
Then
\begin{align*}
S(g)\le A-D\lambda_+(g)^b,
\end{align*}
where $A>0$, $D>0$ and $b>0$ are constants depending only on $G$, $H$, $\mathfrak k$, $Q$, $\tau_1$ and~$\tau_2$.
\end{lemma}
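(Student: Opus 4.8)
The plan is to reduce to the case $\mathfrak k=\mathfrak g$ first, since replacing $\mathfrak g$ by $\mathfrak k$ and $M=G/H$ by the (compact, by maximality of a compactly embedded subgroup argument — but really we just work at the Lie algebra level) homogeneous space attached to $\mathfrak k \supset \mathfrak h$ changes nothing in the formula~\eqref{scal_def} for $S$. So assume $\mathfrak h$ is a maximal Lie subalgebra of $\mathfrak g$ and $\mathfrak k=\mathfrak g$, so that $\mathfrak k\ominus\mathfrak h=\mathfrak m$ and $\mathcal M(\mathfrak k)=\mathcal M$. Fix a $Q$-orthonormal basis of $\mathfrak m$ diagonalising $g$, write the eigenvalues as $y_1\le\cdots\le y_n$ (so $\lambda_-(g)=y_1$ and $\lambda_+(g)=y_n$), and use the coordinate formula from Lemma~\ref{lem_sc_comp} applied with a single-index decomposition $\mathfrak m=\bigoplus \mathbb R e_i$: then $S(g)=\frac12\sum_i B(e_i,e_i)/y_i-\frac14\sum_{i,j,k}[ijk]\,y_k/(y_iy_j)$, where $B(e_i,e_i)\le 0$ and $[ijk]\ge0$. (One can keep the coarser $\Ad(H)$-invariant decomposition instead; the scalar argument below is identical.)

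Next I would produce the upper bound in two pieces. For the Killing-form term, $\frac12\sum_i B(e_i,e_i)/y_i \le \frac12\sum_{i:\,B(e_i,e_i)<0}|B(e_i,e_i)|/y_i \le \frac12\bigl(\sum_i|B(e_i,e_i)|\bigr)/\lambda_-(g) \le \frac{c_1}{\tau_1^{-1}}\cdot$ — more precisely $\le c_1\lambda_-(g)^{-1}$, and since we are told $\lambda_-(g)\le\tau_1$ this is not yet bounded; the correct move is to use $\lambda_-(g)\le \tau_1$ together with the fact (this is the key structural input — the maximality of $\mathfrak h$ in $\mathfrak k$ forces $\Ad(H)|_{\mathfrak m}$ to have no trivial summand, equivalently $G/H$ is not of the excluded product-with-Euclidean type, hence some off-diagonal bracket is nonzero) to dominate the negative cubic term. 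Concretely: maximality of $\mathfrak h$ implies there exist indices with $[ijk]>0$ having $k\ne i$ or $k\ne j$, so the cubic sum contains a term comparable to $\lambda_+(g)\lambda_-(g)^{-2}$, and more usefully, picking the term with the largest $y_k$, we get $\frac14\sum_{i,j,k}[ijk]y_k/(y_iy_j)\ge \frac{c_2\,\lambda_+(g)}{(\text{some }y_iy_j)}\ge c_2'\,\lambda_+(g)^{-1}$... The honest route, which I expect the paper takes, is: combine a lower bound $-\frac14|\Delta|_g^2 \le -c_2\lambda_+(g)/\lambda_-(g)^2$ coming from a single guaranteed-nonzero structure constant with the elementary estimate $-\frac14|\Delta|_g^2\le 0$, and bound the first term of $S$ by $c_1 n /\lambda_-(g)$. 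One then plays off $\lambda_-(g)\le\tau_1$ against the hypothesis $\lambda_+(g)\ge\tau_2$: writing $\lambda_+(g)=:P\ge\tau_2$ and $\lambda_-(g)=:m\le\tau_1$ with $m\le P$, we have $S(g)\le \frac{c_1 n}{m} - \frac{c_2 P}{m^2}$, and on the region $m\le \tau_1$, $P\ge\tau_2$, $m\le P$ the function $\frac{c_1 n}{m}-\frac{c_2 P}{m^2}$ is $\le A - DP^b$ for suitable $A,D,b>0$ — this is a one-variable calculus estimate (the $-P/m^2$ term with $m\le\tau_1$ beats the $+1/m$ term once $P$ is large, and for $P$ in a bounded range $S$ is bounded by compactness/continuity of the relevant expression on the compact set cut out by the constraints), and the exponent $b$ can even be taken $b=1$.

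The main obstacle is the middle step: extracting a quantitatively useful negative term from $-\tfrac14|\Delta(\mathfrak m,\mathfrak m,\mathfrak m)|_g^2$, i.e.\ showing this dominates the (possibly large) positive contribution $\tfrac12\sum_i |B(e_i,e_i)|/y_i$ when $\lambda_+(g)$ is large while $\lambda_-(g)$ stays bounded by $\tau_1$. This is exactly where maximality of $\mathfrak h$ in $\mathfrak k$ enters: it guarantees $\mathfrak m$ carries no nonzero $\Ad(H)$-invariant subspace whose bracket with $\mathfrak m$ is trivial (otherwise that subspace plus $\mathfrak h$ would be an intermediate subalgebra), so for the ``large'' eigendirection $e_n$ there is a nonzero $[nji]$ with, say, the large factor $y_n$ sitting in the numerator over a product $y_iy_j\le\tau_1^2$ or over $y_iy_j\le \tau_1\lambda_+(g)$ — in the worst arrangement one still gets a term $\ge c\,\lambda_+(g)/(\tau_1\lambda_+(g))=c/\tau_1$ which is not enough, so one actually needs the sharper combinatorial observation that the cubic sum, reorganised, contains $\sum_i \bigl(d_ib_i - 2\sum_{j,k}[ijk]\cdot(\text{something})\bigr)/y_i$ plus a genuinely negative remainder; invoking the Casimir identity~\eqref{Casimir} to rewrite $d_ib_i=2d_i\zeta_i+\sum_{j,k}[ijk]$ and cancelling is the trick that makes the positive $1/y_i$ terms harmless. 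Once that cancellation is in place the rest is the routine one-variable estimate above, and the constants $A,D,b$ visibly depend only on $G,H,\mathfrak k,Q,\tau_1,\tau_2$ as claimed.
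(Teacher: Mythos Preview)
There is a genuine gap, and you have essentially located it yourself. The inequality you need,
\[
-\tfrac14|\Delta(\mathfrak m,\mathfrak m,\mathfrak m)|_g^2 \le -c_2\,\frac{\lambda_+(g)}{\lambda_-(g)^2},
\]
does \emph{not} follow from maximality of $\mathfrak h$ in $\mathfrak k$. Maximality tells you that for every proper $\Ad(H)$-invariant $\mathfrak u\subsetneq\mathfrak k\ominus\mathfrak h$ one has $\langle(\mathfrak k\ominus\mathfrak h\ominus\mathfrak u)\,\mathfrak u\,\mathfrak u\rangle>0$; it does \emph{not} say that some nonzero $[ijk]$ has both $x_i,x_j=\lambda_-(g)$ and $x_k=\lambda_+(g)$. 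Your own remark (``in the worst arrangement one still gets a term $\ge c/\tau_1$ which is not enough'') is exactly right, and the Casimir rewriting you propose afterwards does not close this gap: after substituting~\eqref{Casimir} and symmetrising, the cubic contribution is $\tfrac14\sum[ijk](x_i+x_j-x_k)/(x_ix_j)$, which is not sign-definite and does not by itself produce a $-D\lambda_+(g)^b$ term. (Incidentally, the side claim that maximality forces $\Ad(H)|_{\mathfrak m}$ to have no trivial summand is false; take $\dim(\mathfrak k\ominus\mathfrak h)=1$.)

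The paper proceeds differently. It appeals to \cite[Lemma~2.4]{AP16} to obtain the stronger \emph{three}-term estimate
\[
S(g)\le\frac{\tilde A}{\min_j x_j}-\frac{D}{(\min_j x_j)^a}-D\big(\max_j x_j\big)^b,\qquad a>1,~b>0,
\]
with constants depending only on $G,H,\mathfrak k,Q,\tau_1,\tau_2$. The point of the exponent $a>1$ is that $y\mapsto \tilde A/y-D/y^a$ is bounded above on $(0,\infty)$ \emph{regardless} of $\lambda_+(g)$, so the first two terms collapse to a constant $A$ and the $-D(\max_j x_j)^b$ survives. The underlying mechanism in \cite{AP16} is a chaining argument over the ordered eigenvalues: for each $p$ one applies maximality to the span of the $p$ smallest eigenspaces to get a uniformly positive $\langle\cdot\cdot\cdot\rangle\ge\theta>0$ (uniformity via a compactness argument over decompositions, as in Lemma~\ref{lem_inf_bdd}), yielding a term $\gtrsim x_{\sigma(p+1)}/x_{\sigma(p)}^2$ for every gap; combining these is what produces both the $-D/m^a$ and the $-DP^b$ pieces. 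Your two-term bound tries to jump directly from the smallest to the largest eigenvalue in one step, and that single step is simply not available.
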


\begin{proof}
Without loss of generality, let the decomposition~\eqref{m_decomp} satisfy formula~\eqref{g_diag_decm};
cf.~\cite[page~180]{MWWZ86}. The quantity $S(g)$ is then given by Lemma~\ref{lem_sc_comp}. It is easy to see that 
\begin{align}\label{eq_lambdas_x}
\lambda_-(g)=\min_{j\in J_{\mathfrak k}}x_j,\qquad \lambda_+(g)=\max_{j\in J_{\mathfrak k}}x_j.
\end{align}
The estimate
\begin{align}\label{est_old_max}
S(g)\le\frac{\tilde A}{\min_{j\in J_{\mathfrak k}}x_j}-\frac D{(\min_{j\in J_{\mathfrak k}}x_j)^a}-D\big(\max_{j\in J_{\mathfrak k}}x_j\big)^b
\end{align}
holds with the constants $\tilde A>0$, $D>0$, $a>1$ and $b>0$ depending only on $G$, $H$, $\mathfrak k$, $Q$, $\tau_1$ and~$\tau_2$. Indeed, to obtain~\eqref{est_old_max}, it suffices to repeat the proof of~\cite[Lemma~2.4]{AP16} with only elementary modifications to the argument. The function
\begin{align*}
y\mapsto\frac{\tilde A}y-\frac D{y^a}
\end{align*}
is bounded above on $(0,\infty)$. In light of~\eqref{eq_lambdas_x} and~\eqref{est_old_max}, this fact implies
\begin{align*}
S(g)\le A-D\big(\max_{j\in J_{\mathfrak k}}x_j\big)^b=A-D\lambda_+(g)^b
\end{align*}
for some $A>0$ depending only on $G$, $H$, $\mathfrak k$, $Q$, $\tau_1$ and~$\tau_2$.
\end{proof}

We will require the following identity and estimate for $S$ and~$\hat S$.

\begin{lemma}\label{lem_elem_id_est}
Suppose the scalar product $g\in\mathcal M(\mathfrak k)$ and the decomposition~(\ref{m_decomp}) are such that~(\ref{g_diag_decm}) holds. Then
\begin{align}\label{elem_eq}
\hat S(g)&=S(g|_{\mathfrak n})+S(g|_{\mathfrak l})-\frac12\sum_{i\in J_{\mathfrak k}}\sum_{j,k\in J_{\mathfrak j}}\frac{[ijk]}{x_i}
-\frac14\sum_{i,j\in J_{\mathfrak l}}\sum_{k\in J_{\mathfrak k'}}[ijk]\Big(\frac{x_k}{x_ix_j}+2\frac{x_i}{x_jx_k}\Big), \notag
\\
\hat S(g)&\le 
\hat S(g|_{\mathfrak n})+S(g|_{\mathfrak l}).
\end{align}
\end{lemma}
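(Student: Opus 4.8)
The plan is to prove the identity in~\eqref{elem_eq} first and then read off the inequality from it by checking that the discarded terms have a sign. Throughout I would work with the diagonal presentation~\eqref{g_diag_decm}, so that every quantity can be expanded via Lemma~\ref{lem_hat_comp} and Lemma~\ref{lem_sc_comp}. Concretely, I would start from the formula~\eqref{eq_hat_comp} for $\hat S(g)$ and split each of the three sums according to the partition $J_{\mathfrak k}=J_{\mathfrak l}\sqcup J_{\mathfrak k'}$ (and $J_{\mathfrak j'}=J_{\mathfrak j}\sqcup J_{\mathfrak l}$, $J_{\mathfrak g}=J_{\mathfrak j}\sqcup J_{\mathfrak l}\sqcup J_{\mathfrak k'}$). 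The term $\frac12\sum_{i\in J_{\mathfrak k}}\frac{d_ib_i}{x_i}$ splits cleanly into the $\mathfrak l$-part and the $\mathfrak n$-part, matching the $B$-trace terms of $S(g|_{\mathfrak l})$ and $S(g|_{\mathfrak n})$. The middle term $-\frac12\sum_{i\in J_{\mathfrak k}}\sum_{j,k\in J_{\mathfrak j}}\frac{[ijk]}{x_i}$ is exactly the term that is carried over verbatim into the right-hand side of~\eqref{elem_eq}, so nothing needs to be done with it.

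The core of the computation is the cubic term $-\frac14\sum_{i,j,k\in J_{\mathfrak k}}[ijk]\frac{x_k}{x_ix_j}$. Here I would expand the triple sum over $J_{\mathfrak k}=J_{\mathfrak l}\sqcup J_{\mathfrak k'}$ into its eight pieces according to which of $i,j,k$ lie in $J_{\mathfrak l}$ versus $J_{\mathfrak k'}$, and then simplify using Lemma~\ref{lem_vanish_brak}: any $[ijk]$ with exactly one index in $J_{\mathfrak l}$ and the other two in $J_{\mathfrak k'}$ vanishes. This kills three of the eight pieces (those with one $\mathfrak l$-index and two $\mathfrak k'$-indices, in all placements). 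The all-in-$J_{\mathfrak k'}$ piece reconstitutes the cubic term of $S(g|_{\mathfrak n})$, and the all-in-$J_{\mathfrak l}$ piece reconstitutes the cubic term of $S(g|_{\mathfrak l})$. What survives is the sum over configurations with two indices in $J_{\mathfrak l}$ and one in $J_{\mathfrak k'}$, and by symmetry of $[ijk]$ in all three indices this collapses, after relabelling, to $-\frac14\sum_{i,j\in J_{\mathfrak l}}\sum_{k\in J_{\mathfrak k'}}[ijk]\big(\frac{x_k}{x_ix_j}+2\frac{x_i}{x_jx_k}\big)$ — the three placements of the $\mathfrak k'$-index give one term of the first shape and two of the second (again using symmetry to merge). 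Collecting everything yields the identity.

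For the inequality it then suffices to observe two things. First, $S(g|_{\mathfrak l})\le \hat S(g|_{\mathfrak l})$? No — rather I want $S(g|_{\mathfrak l})$ to appear on the right, which it does, but $\hat S(g|_{\mathfrak n})$ appears, not $S(g|_{\mathfrak n})$; since $\mathfrak j'=\mathfrak g\ominus\mathfrak k'$ and $\hat S$ on $\mathcal M(\mathfrak k')$ subtracts $\frac12|\Delta(\mathfrak j',\mathfrak n,\mathfrak j')|^2\ge0$ from $S(g|_{\mathfrak n})$, we have $\hat S(g|_{\mathfrak n})=S(g|_{\mathfrak n})-\frac12\sum_{i\in J_{\mathfrak k'}}\sum_{j,k\in J_{\mathfrak j'}}\frac{[ijk]}{x_i}\le S(g|_{\mathfrak n})$. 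So I need to check that the extra terms appearing when I replace $S(g|_{\mathfrak n})$ by $\hat S(g|_{\mathfrak n})$ are at least as negative as the two discarded cubic terms $-\frac12\sum_{i\in J_{\mathfrak k}}\sum_{j,k\in J_{\mathfrak j}}\frac{[ijk]}{x_i}$ and $-\frac14\sum_{i,j\in J_{\mathfrak l}}\sum_{k\in J_{\mathfrak k'}}[ijk](\cdots)$: precisely, expanding $J_{\mathfrak j'}=J_{\mathfrak j}\sqcup J_{\mathfrak l}$ in the definition of $\hat S(g|_{\mathfrak n})$ produces a $-\frac12\sum_{i\in J_{\mathfrak k'}}\sum_{j,k\in J_{\mathfrak j}}\frac{[ijk]}{x_i}$ contribution and $\mathfrak l$-involving contributions, all of which are manifestly $\le0$ since all $[ijk]$ and $x_i$ are non-negative. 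Thus dropping precisely the manifestly non-positive terms from the identity gives $\hat S(g)\le \hat S(g|_{\mathfrak n})+S(g|_{\mathfrak l})$. The main obstacle I anticipate is purely bookkeeping: correctly tracking the eight-way split of the cubic sum, applying Lemma~\ref{lem_vanish_brak} in each placement, and using the full symmetry of $[ijk]$ to merge the two-$\mathfrak l$-index terms into the stated combination $\frac{x_k}{x_ix_j}+2\frac{x_i}{x_jx_k}$ with the right coefficient $\frac14$; no genuinely hard estimate is involved once the algebra is organised carefully.
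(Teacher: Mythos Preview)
Your treatment of the identity is correct and matches the paper's: expand $\hat S(g)$ via Lemma~\ref{lem_hat_comp}, split the cubic sum over $J_{\mathfrak k}=J_{\mathfrak l}\sqcup J_{\mathfrak k'}$, kill the pieces with two $J_{\mathfrak k'}$-indices and one $J_{\mathfrak l}$-index using Lemma~\ref{lem_vanish_brak}, and regroup.

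The inequality, however, has a real gap. Replacing $S(g|_{\mathfrak n})$ by $\hat S(g|_{\mathfrak n})$ on the right \emph{subtracts} the non-negative quantity $\tfrac12\sum_{i\in J_{\mathfrak k'}}\sum_{j,k\in J_{\mathfrak j'}}\tfrac{[ijk]}{x_i}$, so it makes the target inequality \emph{harder}, not easier; you cannot get there by ``dropping manifestly non-positive terms'' from the identity. Concretely, after cancelling the common $J_{\mathfrak k'}\times J_{\mathfrak j}\times J_{\mathfrak j}$ piece (and using that $[ijk]=0$ whenever one index lies in each of $J_{\mathfrak k'},J_{\mathfrak l},J_{\mathfrak j}$, since $[\mathfrak k',\mathfrak l]\subset\mathfrak k\perp\mathfrak j$), the comparison reduces to showing
\[
\frac12\sum_{i,j\in J_{\mathfrak l}}\sum_{k\in J_{\mathfrak k'}}[ijk]\,\frac{x_i}{x_jx_k}
\;\ge\;
\frac12\sum_{i,j\in J_{\mathfrak l}}\sum_{k\in J_{\mathfrak k'}}\frac{[ijk]}{x_k}.
\]
This is \emph{not} a sign observation: the left-hand side involves the ratios $x_i/x_j$ with $i,j\in J_{\mathfrak l}$, while the right-hand side does not. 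The paper closes the gap with the elementary symmetrisation
\[
\sum_{i,j\in J_{\mathfrak l}}[ijk]\,\frac{x_i}{x_jx_k}
=\frac{1}{2}\sum_{i,j\in J_{\mathfrak l}}\frac{[ijk]}{x_k}\Bigl(\frac{x_i}{x_j}+\frac{x_j}{x_i}\Bigr)
\ge\sum_{i,j\in J_{\mathfrak l}}\frac{[ijk]}{x_k},
\]
i.e.\ $t+t^{-1}\ge2$. After this step the two negative blocks recombine exactly into $-\tfrac12\sum_{i\in J_{\mathfrak k'}}\sum_{j,k\in J_{\mathfrak j'}}\tfrac{[ijk]}{x_i}$, and then one may legitimately drop the remaining non-positive terms (the $\tfrac{x_k}{x_ix_j}$ piece and the $i\in J_{\mathfrak l}$, $j,k\in J_{\mathfrak j}$ piece) to obtain $\hat S(g)\le\hat S(g|_{\mathfrak n})+S(g|_{\mathfrak l})$. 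Insert this symmetrisation and your argument goes through.
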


\begin{proof}
By direct computation, Lemmas~\ref{lem_sc_comp} and~\ref{lem_hat_comp} imply
\begin{align*}
\hat S(g)=S(g|_{\mathfrak n})&+S(g|_{\mathfrak l})-\frac12\sum_{i\in J_{\mathfrak k}}\sum_{j,k\in J_{\mathfrak j}}\frac{[ijk]}{x_i}
\\ &-\frac14\sum_{i,j\in J_{\mathfrak l}}\sum_{k\in J_{\mathfrak k'}}[ijk]\Big(\frac{x_k}{x_ix_j}+2\frac{x_i}{x_jx_k}\Big)-\frac14\sum_{i\in J_{\mathfrak l}}\sum_{j,k\in J_{\mathfrak k'}}[ijk]\Big(2\frac{x_k}{x_ix_j}+\frac{x_i}{x_jx_k}\Big).
\end{align*}
The last of the five terms on the right-hand side vanishes. Indeed, Lemma~\ref{lem_vanish_brak} shows that the coefficients $[ijk]$ in this term are all~0. Thus, the identity in the first line of~\eqref{elem_eq} must hold. To prove the estimate, observe that
\begin{align*}
\sum_{i,j\in J_{\mathfrak l}}[ijk]\frac{x_i}{x_jx_k}
=
\frac12\sum_{i,j\in J_{\mathfrak l}}\frac{[ijk]}{x_k}\Big(\frac{x_i}{x_j}+\frac{x_j}{x_i}\Big)\ge
\sum_{i,j\in J_{\mathfrak l}}\frac{[ijk]}{x_k},\qquad k\in J_{\mathfrak k'}.
\end{align*}
Consequently,
\begin{align*}
\hat S(g)
&=S(g|_{\mathfrak n})+S(g|_{\mathfrak l})
-\frac12\sum_{i\in J_{\mathfrak k}}\sum_{j,k\in J_{\mathfrak j}}\frac{[ijk]}{x_i}-\frac14\sum_{i,j\in J_{\mathfrak l}}\sum_{k\in J_{\mathfrak k'}}[ijk]\frac{x_k}{x_ix_j}-\frac12\sum_{i,j\in J_{\mathfrak l}}\sum_{k\in J_{\mathfrak k'}}[ijk]\frac{x_i}{x_jx_k}
\\
&\le S(g|_{\mathfrak n})+S(g|_{\mathfrak l})
-\frac12\sum_{i\in J_{\mathfrak k}}\sum_{j,k\in J_{\mathfrak j}}\frac{[ijk]}{x_i}-\frac12\sum_{i,j\in J_{\mathfrak l}}\sum_{k\in J_{\mathfrak k'}}\frac{[ijk]}{x_k}
\\
&= S(g|_{\mathfrak n})+S(g|_{\mathfrak l})
-\frac12\sum_{i\in J_{\mathfrak k'}}\sum_{j,k\in J_{\mathfrak j'}}\frac{[ijk]}{x_i}-\frac12\sum_{i\in J_{\mathfrak l}}\sum_{j,k\in J_{\mathfrak j}}\frac{[ijk]}{x_i}
\\
&=\hat S(g|_{\mathfrak n})+S(g|_{\mathfrak l})-\frac12\sum_{i\in J_{\mathfrak l}}\sum_{j,k\in J_{\mathfrak j}}\frac{[ijk]}{x_i}
\le\hat S(g|_{\mathfrak n})+S(g|_{\mathfrak l}).
\end{align*}
\end{proof}

Fix $T\in\mathcal M$. Given a scalar product $g\in\mathcal M(\mathfrak k)$ and a subspace $\mathfrak u$ of $\mathfrak k\ominus\mathfrak h$, the notation $g|_{\mathfrak u}$ stands for the restriction of $g$ to $\mathfrak u$. If $R$ is a bilinear form on $\mathfrak m$, let $\tr_gR|_{\mathfrak u}$ be the trace of $R|_{\mathfrak u}$ with respect to $g|_{\mathfrak u}$. Define
\begin{align*}
\mathcal M_T(\mathfrak k)=\{g\in\mathcal M(\mathfrak k)\,|\,\tr_gT|_{\mathfrak k\ominus\mathfrak h}=1\}.
\end{align*}
In what follows, we assume $\mathcal M_T(\mathfrak k)$ carries the topology inherited from $\mathcal M(\mathfrak k)$. There is a natural identification between $\mathcal M_T(\mathfrak g)$ and $\mathcal M_T$. We will need the following bounds on $\lambda_-(g)$, $S(g)$ and $\hat S(g)$.

\begin{lemma}\label{lem_smpl_bd}
If $g$ lies in $\mathcal M_T(\mathfrak k)$ and $\mathfrak u$ is a nonzero subspace of $\mathfrak k\ominus\mathfrak h$, then
\begin{align*}
\lambda_-(g)\ge\omega(\mathfrak k\ominus\mathfrak h)\lambda_-(T|_{\mathfrak k\ominus\mathfrak h}),\qquad \hat S(g|_{\mathfrak u})\le S(g|_{\mathfrak u})\le-\frac12\tr_gB|_{\mathfrak u}\le-\frac{\lambda_-(B)}{2\lambda_-(T)}.
\end{align*}
\end{lemma}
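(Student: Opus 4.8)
The plan is to treat the four successive inequalities one at a time, since three of them are essentially immediate. The bound $\hat S(g|_{\mathfrak u})\le S(g|_{\mathfrak u})$ is built into the definition of $\hat S$, which differs from $S$ by $-\frac12$ times a squared norm. Likewise, by~\eqref{scal_def} we have $S(g|_{\mathfrak u})=-\frac12\tr_gB|_{\mathfrak u}-\frac14|\Delta(\mathfrak u,\mathfrak u,\mathfrak u)|_g^2\le-\frac12\tr_gB|_{\mathfrak u}$, because $|\Delta(\mathfrak u,\mathfrak u,\mathfrak u)|_g^2\ge0$. So only the first and the last inequalities require a genuine argument.

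For the last inequality I would combine three elementary facts. Since $\mathfrak g$ is compact, $B$ is negative semidefinite, so $\lambda_-(B)\le0$ and $B(X,X)\ge\lambda_-(B)Q(X,X)$ for all $X\in\mathfrak m$; tracing over $\mathfrak u$ and multiplying by $-\frac12$ (which reverses the inequality) gives $-\frac12\tr_gB|_{\mathfrak u}\le-\frac{\lambda_-(B)}{2}\tr_gQ|_{\mathfrak u}$, with $-\frac{\lambda_-(B)}{2}\ge0$. Next, since $Q$ is positive definite and $\mathfrak u\subset\mathfrak k\ominus\mathfrak h$, extending a $g$-orthonormal basis of $\mathfrak u$ to one of $\mathfrak k\ominus\mathfrak h$ shows $\tr_gQ|_{\mathfrak u}\le\tr_gQ|_{\mathfrak k\ominus\mathfrak h}$. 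Finally, $T\ge\lambda_-(T)Q$ on $\mathfrak m$ together with the defining relation $\tr_gT|_{\mathfrak k\ominus\mathfrak h}=1$ of $\mathcal M_T(\mathfrak k)$ yields $\lambda_-(T)\tr_gQ|_{\mathfrak k\ominus\mathfrak h}\le1$, i.e.\ $\tr_gQ|_{\mathfrak k\ominus\mathfrak h}\le1/\lambda_-(T)$. Chaining these three estimates produces $-\frac12\tr_gB|_{\mathfrak u}\le-\lambda_-(B)/(2\lambda_-(T))$.

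For the bound on $\lambda_-(g)$, the point is to bring in $\Ad(H)$-invariance so that $\omega(\mathfrak k\ominus\mathfrak h)$ enters. Let $\tilde g$ be the $Q$-self-adjoint operator on $\mathfrak k\ominus\mathfrak h$ with $g(X,Y)=Q(\tilde g X,Y)$; as $g$ and $Q$ are $\Ad(H)$-invariant, $\tilde g$ commutes with $\Ad(H)$, so its eigenspace $E$ for the smallest eigenvalue $\lambda_-(g)$ is a nonzero $\Ad(H)$-invariant subspace of $\mathfrak k\ominus\mathfrak h$, whence $\dim E\ge\omega(\mathfrak k\ominus\mathfrak h)$. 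On $E$ one has $g=\lambda_-(g)Q$, so $\tr_gQ|_E=\dim E/\lambda_-(g)$. Now, using $T\ge\lambda_-(T|_{\mathfrak k\ominus\mathfrak h})Q$ on $\mathfrak k\ominus\mathfrak h$, the positivity of $T$ (for the monotonicity $\tr_gT|_E\le\tr_gT|_{\mathfrak k\ominus\mathfrak h}$), and the normalization,
\[
1=\tr_gT|_{\mathfrak k\ominus\mathfrak h}\ge\tr_gT|_E\ge\lambda_-(T|_{\mathfrak k\ominus\mathfrak h})\,\tr_gQ|_E=\lambda_-(T|_{\mathfrak k\ominus\mathfrak h})\,\frac{\dim E}{\lambda_-(g)}.
\]
Rearranging and using $\dim E\ge\omega(\mathfrak k\ominus\mathfrak h)$ and $\lambda_-(T|_{\mathfrak k\ominus\mathfrak h})>0$ gives $\lambda_-(g)\ge\omega(\mathfrak k\ominus\mathfrak h)\lambda_-(T|_{\mathfrak k\ominus\mathfrak h})$.

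There is no real obstacle: the lemma is a routine package of eigenvalue and trace-monotonicity estimates. The only points to watch are the directions of the inequalities once $-\frac12$ is applied (which is why one records $\lambda_-(B)\le0$, this also making the degenerate case $B|_{\mathfrak m}=0$ consistent), and the $\Ad(H)$-invariance of the minimal eigenspace of $\tilde g$, which is precisely what links $\lambda_-(g)$ to the combinatorial quantity $\omega(\mathfrak k\ominus\mathfrak h)$.
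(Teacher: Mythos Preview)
Your proof is correct and follows essentially the same route as the paper's. The only notable variation is in the first inequality: the paper diagonalises $g$ with respect to an irreducible decomposition~\eqref{m_decomp} and works on a single irreducible summand $\mathfrak m_q$ achieving the minimum eigenvalue, whereas you argue directly with the full $\lambda_-(g)$-eigenspace of the operator $\tilde g$ and invoke its $\Ad(H)$-invariance. Both devices produce a nonzero $\Ad(H)$-invariant subspace on which $g=\lambda_-(g)Q$, which is all that is needed; your version avoids fixing a decomposition but is otherwise the same idea. For the last inequality the paper bounds $\tr_gQ|_{\mathfrak u}$ via $1\ge\tr_gT|_{\mathfrak u}\ge\lambda_-(T|_{\mathfrak u})\tr_gQ|_{\mathfrak u}$ rather than first enlarging to $\mathfrak k\ominus\mathfrak h$ as you do, but the two chains are equivalent.
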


\begin{proof}
We may assume without loss of generality that the decomposition~\eqref{m_decomp} satisfies~\eqref{g_diag_decm};
cf.~\cite[page~180]{MWWZ86}. Let $q$ be a number in $J_{\mathfrak k}$ such that
\begin{align*}
\lambda_-(g)=\min\{x_i\,|\,i\in J_{\mathfrak k}\}=x_q.
\end{align*}
Fix a $Q$-orthonormal basis $(e_j)_{j=1}^{d_q}$ of $\mathfrak m_q$. The inclusions $g\in\mathcal M_T(\mathfrak k)$ and $T\in\mathcal M$ imply
\begin{align*}
1=\tr_gT|_{\mathfrak k\ominus\mathfrak h}\ge\tr_gT|_{\mathfrak m_q}=\sum_{j=1}^{d_q}\frac{T(e_j,e_j)}{g(e_j,e_j)}\ge\frac{d_q\lambda_-(T|_{\mathfrak m_q})}{\lambda_-(g)}\ge\frac{\omega(\mathfrak k\ominus\mathfrak h)\lambda_-(T|_{\mathfrak k\ominus\mathfrak h})}{\lambda_-(g)}.
\end{align*}
Thus, the first estimate must hold.

It is obvious that $\hat S(g|_{\mathfrak u})\le S(g|_{\mathfrak u})$. By formula~\eqref{scal_def}, 
\begin{align*}
S(g|_{\mathfrak u})\le-\frac12\tr_gB|_{\mathfrak u}\le-\frac12\lambda_-(B|_{\mathfrak u})\tr_gQ|_{\mathfrak u}.
\end{align*}
The inclusion $T\in\mathcal M$ implies
\begin{align*}
1\ge\tr_gT|_{\mathfrak u}\ge\lambda_-(T|_{\mathfrak u})\tr_gQ|_{\mathfrak u}.
\end{align*}
Therefore,
\begin{align*}
-\frac12\lambda_-(B|_{\mathfrak u})\tr_gQ|_{\mathfrak u}\le-\frac{\lambda_-(B|_{\mathfrak u})}{2\lambda_-(T|_{\mathfrak u})}\le-\frac{\lambda_-(B)}{2\lambda_-(T)}.
\end{align*}
\end{proof}

We will also need the following simple consequence of~\eqref{Casimir}.

\begin{lemma}\label{lem_sup_pos}
The quantity
\begin{align*}
\sup\big\{\hat S(h)\,\big|\,h\in\mathcal M_T(\mathfrak k)\big\}
\end{align*}
is non-negative.
\end{lemma}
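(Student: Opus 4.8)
The plan is to exhibit an explicit family of metrics in $\mathcal M_T(\mathfrak k)$ along which $\hat S$ is bounded below by a quantity tending to $0$, so that the supremum is automatically non-negative. The natural family to use is the one-parameter family of metrics that scales like $Q$ on $\mathfrak k\ominus\mathfrak h$, suitably normalised so that the trace constraint $\tr_g T|_{\mathfrak k\ominus\mathfrak h}=1$ holds. Concretely, fix the decomposition~\eqref{m_decomp} so that $T=\sum_i z_i\pi_{\mathfrak m_i}^*Q$ with $z_i>0$, and for a scalar $t>0$ consider $g_t=\sum_{i\in J_{\mathfrak k}} t\,\pi_{\mathfrak m_i}^*Q$; the constraint then forces the single value $t=\sum_{i\in J_{\mathfrak k}} d_i z_i$, so in fact there is only one such metric, call it $g_*$, with all $x_i$ equal to a common constant $x_*=\sum_{i\in J_{\mathfrak k}} d_i z_i$.

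Next I would plug $g_*$ into the formula~\eqref{eq_hat_comp} of Lemma~\ref{lem_hat_comp}. With all $x_i=x_*$, the three terms become $\frac{1}{2x_*}\sum_{i\in J_{\mathfrak k}} d_i b_i$, $-\frac{1}{2x_*}\sum_{i\in J_{\mathfrak k}}\sum_{j,k\in J_{\mathfrak j}}[ijk]$, and $-\frac{1}{4x_*}\sum_{i,j,k\in J_{\mathfrak k}}[ijk]$. So $\hat S(g_*)=\frac{1}{x_*}\big(\frac12\sum_{i\in J_{\mathfrak k}} d_i b_i-\frac12\sum_{i\in J_{\mathfrak k}}\sum_{j,k\in J_{\mathfrak j}}[ijk]-\frac14\sum_{i,j,k\in J_{\mathfrak k}}[ijk]\big)$. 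Since $x_*>0$, it suffices to show the bracketed quantity is non-negative. Here I would invoke the Casimir identity~\eqref{Casimir}, $d_i b_i=2d_i\zeta_i+\sum_{j,k=1}^s[ijk]$, summed over $i\in J_{\mathfrak k}$, and use the full symmetry of $[ijk]$ together with the book-keeping among the index sets $J_{\mathfrak k}$, $J_{\mathfrak j}$, $J_{\mathfrak k'}$, $J_{\mathfrak l}$ (exactly as in the proof of Lemma~\ref{lem_eta_wdpos}). Splitting $\sum_{j,k=1}^s=\sum_{j,k\in J_{\mathfrak k}}+2\sum_{j\in J_{\mathfrak k}, k\in J_{\mathfrak j}}+\sum_{j,k\in J_{\mathfrak j}}$ and noting that $[ijk]$ with $i\in J_{\mathfrak k}$, $j,k\in J_{\mathfrak j}$ vanishes (since $\mathfrak j$-$\mathfrak j$-$\mathfrak k$ brackets land in $\mathfrak k$ while the $\mathfrak m_i$ with $i\in J_{\mathfrak k}$ are inside $\mathfrak k$ — wait, that is the wrong direction; rather one uses that $\Delta(\mathfrak j,\mathfrak j,\mathfrak k\ominus\mathfrak h)$ need not vanish, so one keeps these terms) — the upshot is
\[
\tfrac12\sum_{i\in J_{\mathfrak k}} d_i b_i-\tfrac12\sum_{i\in J_{\mathfrak k}}\sum_{j,k\in J_{\mathfrak j}}[ijk]-\tfrac14\sum_{i,j,k\in J_{\mathfrak k}}[ijk]=\sum_{i\in J_{\mathfrak k}}d_i\zeta_i+\tfrac14\sum_{i,j,k\in J_{\mathfrak k}}[ijk]\ge0,
\]
which is manifestly non-negative because all $d_i$, $\zeta_i$, $[ijk]$ are non-negative. (I would double-check the coefficient arithmetic carefully; the symmetric-in-three-indices cancellations between $-\frac14\sum_{J_{\mathfrak k}^3}$ coming directly and the $+\frac12$ contribution hidden inside $\sum d_ib_i$ are where a sign slip would hide.)

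I therefore expect the main obstacle to be purely the combinatorial bookkeeping: correctly expanding $\sum_{i\in J_{\mathfrak k}}\sum_{j,k=1}^s[ijk]$ into its $J_{\mathfrak k}$- and $J_{\mathfrak j}$-blocks, matching these against the two negative terms in $\hat S(g_*)$, and confirming that everything collapses to a sum of non-negative terms — essentially a re-run of the computation in Lemma~\ref{lem_eta_wdpos} but for the whole of $\mathfrak k\ominus\mathfrak h$ rather than for $\mathfrak l$ and $\mathfrak n$ separately. Once that identity is in hand, the conclusion $\hat S(g_*)\ge 0$ is immediate, and since $g_*\in\mathcal M_T(\mathfrak k)$ the supremum in the statement is $\ge\hat S(g_*)\ge 0$. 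An alternative, should the direct computation prove awkward, is to apply the estimate~\eqref{elem_eq} of Lemma~\ref{lem_elem_id_est} together with Lemma~\ref{lem_sc_comp} applied to $g_*|_{\mathfrak n}$ and $g_*|_{\mathfrak l}$, reducing to the same Casimir-identity manipulation one block at a time; but the one-line computation above seems cleanest.
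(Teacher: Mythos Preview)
Your approach is exactly the paper's: evaluate $\hat S$ at the single ``normal'' metric $g_*=\psi\,Q|_{\mathfrak k\ominus\mathfrak h}$ with $\psi=\tr_QT|_{\mathfrak k\ominus\mathfrak h}$, plug into~\eqref{eq_hat_comp}, and reduce via~\eqref{Casimir} to a sum of non-negative terms. Your displayed identity is correct and matches the paper's line $\hat S(\psi Q|_{\mathfrak k\ominus\mathfrak h})=\frac1{2\psi}\sum_{i\in J_{\mathfrak k}}\big(2d_i\zeta_i+\tfrac12\sum_{j,k\in J_{\mathfrak k}}[ijk]\big)\ge0$.

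The one place where your bookkeeping wobbles is in the parenthetical about which cross terms vanish. The term you \emph{do} need to kill to get your final identity is $\sum_{i,j\in J_{\mathfrak k}}\sum_{k\in J_{\mathfrak j}}[ijk]$, and this vanishes because $\mathfrak k$ is a Lie subalgebra: for $i,j\in J_{\mathfrak k}$ one has $[\mathfrak m_i,\mathfrak m_j]\subset\mathfrak k$, so $\pi_{\mathfrak m_k}[\mathfrak m_i,\mathfrak m_j]=0$ for $k\in J_{\mathfrak j}$, whence $[kij]=[ijk]=0$. The terms with $i\in J_{\mathfrak k}$ and $j,k\in J_{\mathfrak j}$ do \emph{not} in general vanish (you caught yourself on this), but they cancel exactly against the $-\tfrac12\sum_{i\in J_{\mathfrak k}}\sum_{j,k\in J_{\mathfrak j}}[ijk]$ coming from the $|\Delta(\mathfrak j,\mathfrak k\ominus\mathfrak h,\mathfrak j)|^2$ piece of $\hat S$. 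With that clarified, your computation goes through verbatim.
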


\begin{proof}
Denote $\psi=\tr_QT|_{\mathfrak k\ominus\mathfrak h}$. Because
\begin{align*}
\tr_{\psi Q}T|_{\mathfrak k\ominus\mathfrak h}=\frac1\psi\tr_QT|_{\mathfrak k\ominus\mathfrak h}=1,
\end{align*}
the tensor $\psi Q|_{\mathfrak k\ominus\mathfrak h}$ lies in $\mathcal M_T(\mathfrak k)$.
Using Lemma~\ref{lem_hat_comp} and formula~\eqref{Casimir}, we obtain
\begin{align*}
\sup\big\{\hat S(h)\,\big|\,h\in\mathcal M_T(\mathfrak k)\big\}
\ge\hat S(\psi Q|_{\mathfrak k\ominus\mathfrak h})&=\frac1{2\psi}\sum_{i\in J_{\mathfrak k}}\bigg(d_ib_i-\sum_{j,k\in J_{\mathfrak j}}[ijk]-\frac12\sum_{j,k\in J_{\mathfrak k}}[ijk]\bigg)
\\ &=\frac1{2\psi}\sum_{i\in J_{\mathfrak k}}\bigg(2d_i\zeta_i+\frac12\sum_{j,k\in J_{\mathfrak k}}[ijk]\bigg)\ge0.
\end{align*}
\end{proof}

Let us conclude this subsection with one more auxiliary result about scalar products from $\mathcal M_T(\mathfrak k)$.

\begin{lemma}\label{lem_compact}
Given $\tau>0$, the set
\begin{align*}
\mathcal C(\mathfrak k,\tau)=\{g\in\mathcal M_T(\mathfrak k)\,|\,\lambda_+(g)\le\tau\}
\end{align*}
is compact in $\mathcal M_T(\mathfrak k)$.
\end{lemma}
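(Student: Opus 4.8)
The plan is to realise $\mathcal C(\mathfrak k,\tau)$ as a closed subset of a \emph{compact} subset of the finite-dimensional vector space of $\Ad(H)$-invariant symmetric bilinear forms on $\mathfrak k\ominus\mathfrak h$, and thus to reduce compactness to the Heine--Borel property. First I would observe that every $g\in\mathcal C(\mathfrak k,\tau)$ satisfies a two-sided bound
$\kappa\,Q|_{\mathfrak k\ominus\mathfrak h}\le g\le\tau\,Q|_{\mathfrak k\ominus\mathfrak h}$
in the sense of quadratic forms, where $\kappa=\omega(\mathfrak k\ominus\mathfrak h)\,\lambda_-(T|_{\mathfrak k\ominus\mathfrak h})$. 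The upper bound is immediate from the definition of $\mathcal C(\mathfrak k,\tau)$ together with formula~\eqref{lambda_pm_def}, and the lower bound is exactly the first estimate in Lemma~\ref{lem_smpl_bd}, which applies because $\mathcal C(\mathfrak k,\tau)\subset\mathcal M_T(\mathfrak k)$. Crucially $\kappa>0$: the tensor $T$ lies in $\mathcal M$ and is therefore positive-definite, so $\lambda_-(T|_{\mathfrak k\ominus\mathfrak h})>0$, and $\omega(\mathfrak k\ominus\mathfrak h)\ge1$.

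Next I would introduce the set $K$ of all $\Ad(H)$-invariant symmetric bilinear forms $g$ on $\mathfrak k\ominus\mathfrak h$ obeying $\kappa\,Q|_{\mathfrak k\ominus\mathfrak h}\le g\le\tau\,Q|_{\mathfrak k\ominus\mathfrak h}$. Expressing $g$ in a fixed $Q|_{\mathfrak k\ominus\mathfrak h}$-orthonormal basis, each entry of its matrix is controlled by $\tau$, so $K$ is bounded; and $K$ is closed, since $\Ad(H)$-invariance is a linear condition and each of the inequalities $\kappa\,Q(X,X)\le g(X,X)\le\tau\,Q(X,X)$, for fixed $X$, is closed. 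Hence $K$ is compact. Moreover every $g\in K$ is positive-definite (because $g\ge\kappa\,Q|_{\mathfrak k\ominus\mathfrak h}$ with $\kappa>0$), so $K\subset\mathcal M(\mathfrak k)$ and the map $g\mapsto\tr_gT|_{\mathfrak k\ominus\mathfrak h}$ is continuous on $K$.

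Finally I would verify the identity $\mathcal C(\mathfrak k,\tau)=\{g\in K\,|\,\tr_gT|_{\mathfrak k\ominus\mathfrak h}=1\}$. The inclusion ``$\subset$'' was established in the first paragraph, combined with the defining condition of $\mathcal M_T(\mathfrak k)$. For ``$\supset$'', any $g\in K$ with $\tr_gT|_{\mathfrak k\ominus\mathfrak h}=1$ is a positive-definite $\Ad(H)$-invariant form, hence lies in $\mathcal M_T(\mathfrak k)$, and the bound $g\le\tau\,Q|_{\mathfrak k\ominus\mathfrak h}$ forces $\lambda_+(g)\le\tau$, so $g\in\mathcal C(\mathfrak k,\tau)$. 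As the preimage of $\{1\}$ under a continuous function on the compact set $K$, the right-hand side is compact, and since $\mathcal M_T(\mathfrak k)$ carries the subspace topology this gives compactness of $\mathcal C(\mathfrak k,\tau)$ in $\mathcal M_T(\mathfrak k)$. I do not expect a genuine obstacle; the only delicate point is that a priori a sequence in $\mathcal M_T(\mathfrak k)$ might degenerate in the limit, and the uniform lower bound $\lambda_-(g)\ge\kappa>0$ supplied by Lemma~\ref{lem_smpl_bd} is precisely what prevents this.
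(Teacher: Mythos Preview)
Your argument is correct and essentially identical to the paper's: both use the lower bound $\lambda_-(g)\ge\omega(\mathfrak k\ominus\mathfrak h)\lambda_-(T|_{\mathfrak k\ominus\mathfrak h})$ from Lemma~\ref{lem_smpl_bd} together with the defining upper bound $\lambda_+(g)\le\tau$ to trap $\mathcal C(\mathfrak k,\tau)$ inside a compact set of symmetric forms, and then note that $\mathcal C(\mathfrak k,\tau)$ is closed therein. Your set $K$ is precisely the paper's $\mathcal D(\mathfrak k,\tau)$, and your closedness argument via the continuous map $g\mapsto\tr_gT|_{\mathfrak k\ominus\mathfrak h}$ is just a slightly more explicit version of the paper's one-line observation that $\mathcal C(\mathfrak k,\tau)$ is closed in $\mathcal M(\mathfrak k)$.
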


\begin{proof}
Lemma~\ref{lem_smpl_bd} yields the inclusion
\begin{align*}
\mathcal C(\mathfrak k,\tau)\subset\mathcal D(\mathfrak k,\tau)=\{g\in\mathcal M(\mathfrak k)\,|\,\omega(\mathfrak k\ominus\mathfrak h)\lambda_-(T|_{\mathfrak k\ominus\mathfrak h})\le\lambda_-(g)\le\lambda_+(g)\le\tau\}.
\end{align*}
Exploiting the fact that the set of $k\times k$ matrices with eigenvalues in some bounded closed interval is compact in $\mathbb R^{k^2}$ for $k\ge1$, one can easily verify that $\mathcal D(\mathfrak k,\tau)$ is compact in $\mathcal M(\mathfrak k)$. It is clear that $\mathcal C(\mathfrak k,\tau)$ is closed in $\mathcal M(\mathfrak k)$. Therefore, $\mathcal C(\mathfrak k,\tau)$ must be compact in $\mathcal M(\mathfrak k)$. The assertion of the lemma now follows from the fact that the topology of $\mathcal M_T(\mathfrak k)$ is inherited from $\mathcal M(\mathfrak k)$.
\end{proof}

\subsection{The key estimate}\label{sec_finalise}

Throughout Subsections~\ref{sec_finalise}--\ref{subsec_ex_glob_max}, we suppose $\mathfrak k$ is a Lie subalgebra of $\mathfrak g$ containing $\mathfrak h$ as a proper subset. Recall that, by assumption, $\mathfrak k$ must meet the requirements of Hypothesis~\ref{hyp_flag}. Let $\mathfrak k_1,\ldots,\mathfrak k_r$ be all the maximal Lie subalgebras of $\mathfrak k$ containing $\mathfrak h$ as a proper subset. In Subsection~\ref{sec_finalise}, we suppose that at least one such subalgebra exists. The fact that there are only finitely many follows from Corollary~\ref{cor_finite_subalg}. It is clear that
\begin{align}\label{simple_i_chain}
\mathfrak g\supset\mathfrak k\supset\mathfrak k_i\supset\mathfrak h
\end{align}
is a simple chain for every $i=1,\ldots,r$.

Our first main objective in this subsection is to estimate the values of the functional $\hat S$ on $\mathcal M_T(\mathfrak k)$ in terms of its values on $\mathcal M_T(\mathfrak k_1),\ldots,\mathcal M_T(\mathfrak k_r)$. We achieve this objective in Lemma~\ref{lem_eps+max}. Afterwards, we use the obtained result to show that $\hat S$ has a global maximum on $\mathcal M_T(\mathfrak k)$ if it has global maxima on $\mathcal M_T(\mathfrak k_1),\ldots,\mathcal M_T(\mathfrak k_r)$ and the conditions of Theorem~\ref{thm_PRC} are satisfied. This is the content of Lemma~\ref{lem_step}. It will be convenient for us to denote
\begin{align*}
\mathfrak l_i=\mathfrak k\ominus\mathfrak k_i.
\end{align*}
Let $\Theta(\mathfrak k)$ be the class of $\Ad(H)$-invariant proper subspaces $\mathfrak u\subset\mathfrak k\ominus\mathfrak h$ such that 
\begin{align*}
\mathfrak u\cap\mathfrak l_i\ne\{0\}
\end{align*}
for each $i=1,\ldots,r$. Observe that $\mathfrak u\oplus\mathfrak h$ cannot be a Lie subalgebra of $\mathfrak k$ if $\mathfrak u\in\Theta(\mathfrak k)$.

The following result will help us estimate~$\hat S$. Roughly speaking, it is a consequence of the compactness of the set of decompositions of the form~\eqref{m_decomp}.

\begin{lemma}\label{lem_inf_bdd}
The number
\begin{align*}
\theta=
\begin{cases}
\inf\{\langle\mathfrak u\mathfrak u\mathfrak q\rangle\,|\,\mathfrak u\in\Theta(\mathfrak k)~\rm{and}~\mathfrak q=\mathfrak k\ominus(\mathfrak u\oplus\mathfrak h)\} & \mathrm{if}~\Theta(\mathfrak u)\ne\emptyset \\
1& \mathrm{if}~\Theta(\mathfrak u)=\emptyset
\end{cases}
\end{align*}
is greater than 0.
\end{lemma}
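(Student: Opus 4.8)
The plan is to argue by contradiction using a compactness argument over the space of admissible decompositions of $\mathfrak m$. Suppose $\Theta(\mathfrak k)\ne\emptyset$ (otherwise $\theta=1>0$ trivially) and that $\theta=0$. Then there is a sequence $(\mathfrak u_\ell)_{\ell=1}^\infty$ in $\Theta(\mathfrak k)$ with $\langle\mathfrak u_\ell\mathfrak u_\ell\mathfrak q_\ell\rangle\to0$, where $\mathfrak q_\ell=\mathfrak k\ominus(\mathfrak u_\ell\oplus\mathfrak h)$. Each $\mathfrak u_\ell$ is an $\Ad(H)$-invariant subspace of the finite-dimensional space $\mathfrak k\ominus\mathfrak h$, so it is a point in a disjoint union of Grassmannians indexed by $\dim\mathfrak u_\ell\in\{1,\dots,\dim(\mathfrak k\ominus\mathfrak h)\}$; passing to a subsequence we may assume the dimension is constant and that $\mathfrak u_\ell\to\mathfrak u_\infty$ in the relevant Grassmannian. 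The limit $\mathfrak u_\infty$ is again $\Ad(H)$-invariant (the set of $\Ad(H)$-invariant subspaces is closed, being cut out by the condition $\Ad(h)\mathfrak u=\mathfrak u$ for all $h\in H$), and it is a proper subspace of $\mathfrak k\ominus\mathfrak h$ of the same dimension. Since $\mathfrak u\mapsto\mathfrak u\cap\mathfrak l_i$ having nonzero intersection is a closed condition on the Grassmannian of subspaces of fixed dimension—equivalently $\dim(\mathfrak u+\mathfrak l_i)\le\dim\mathfrak u+\dim\mathfrak l_i-1$, an upper-semicontinuity statement—we get $\mathfrak u_\infty\cap\mathfrak l_i\ne\{0\}$ for each $i$, so $\mathfrak u_\infty\in\Theta(\mathfrak k)$.

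Next I would use continuity of the bracket-norm quantity. The function $\mathfrak u\mapsto\langle\mathfrak u\mathfrak u\mathfrak q\rangle$ with $\mathfrak q=\mathfrak k\ominus(\mathfrak u\oplus\mathfrak h)$ is continuous in $\mathfrak u$: writing $\pi_{\mathfrak u}$ and $\pi_{\mathfrak q}$ for the $Q$-orthogonal projections, the quantity $\langle\mathfrak u\mathfrak u\mathfrak q\rangle$ is the squared Hilbert--Schmidt norm of the trilinear expression $(X,Y)\mapsto\pi_{\mathfrak q}[\pi_{\mathfrak u}X,\pi_{\mathfrak u}Y]$ restricted appropriately, and the projections $\pi_{\mathfrak u}$, $\pi_{\mathfrak q}$ depend continuously on the point of the Grassmannian. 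Hence $\langle\mathfrak u_\infty\mathfrak u_\infty\mathfrak q_\infty\rangle=\lim_\ell\langle\mathfrak u_\ell\mathfrak u_\ell\mathfrak q_\ell\rangle=0$, with $\mathfrak q_\infty=\mathfrak k\ominus(\mathfrak u_\infty\oplus\mathfrak h)$.

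The final step extracts the contradiction. The vanishing $\langle\mathfrak u_\infty\mathfrak u_\infty\mathfrak q_\infty\rangle=0$ means $\pi_{\mathfrak q_\infty}[X,Y]=0$ for all $X,Y\in\mathfrak u_\infty$; since $[\mathfrak u_\infty,\mathfrak u_\infty]\subset\mathfrak k$ and $\mathfrak k=\mathfrak u_\infty\oplus\mathfrak q_\infty\oplus\mathfrak h$, we get $[\mathfrak u_\infty,\mathfrak u_\infty]\subset\mathfrak u_\infty\oplus\mathfrak h$, so $\mathfrak u_\infty\oplus\mathfrak h$ is a Lie subalgebra of $\mathfrak k$ containing $\mathfrak h$ properly and properly contained in $\mathfrak k$. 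By definition of the $\mathfrak k_i$, such a subalgebra must be contained in one of the maximal subalgebras $\mathfrak k_{i_0}$, i.e.\ $\mathfrak u_\infty\oplus\mathfrak h\subset\mathfrak k_{i_0}$, whence $\mathfrak u_\infty\subset\mathfrak k_{i_0}\ominus\mathfrak h$ and therefore $\mathfrak u_\infty\cap\mathfrak l_{i_0}=\mathfrak u_\infty\cap(\mathfrak k\ominus\mathfrak k_{i_0})=\{0\}$. This contradicts $\mathfrak u_\infty\in\Theta(\mathfrak k)$, and I note $\langle\mathfrak u_\infty\mathfrak u_\infty\mathfrak q_\infty\rangle>0$ strictly (as the original observation after the definition of $\Theta(\mathfrak k)$ records, $\mathfrak u\oplus\mathfrak h$ is never a subalgebra for $\mathfrak u\in\Theta(\mathfrak k)$, so the bracket cannot close up). Hence $\theta>0$.

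I expect the main obstacle to be setting up the compactness cleanly: one must handle the varying dimension of $\mathfrak u$ (finitely many Grassmannians, pass to a subsequence), check that both the $\Ad(H)$-invariance and the conditions $\mathfrak u\cap\mathfrak l_i\ne\{0\}$ survive the limit, and confirm that $\langle\mathfrak u\mathfrak u\mathfrak q\rangle$ is genuinely continuous as $\mathfrak q$ itself moves with $\mathfrak u$—the rest is the routine Lie-algebra argument that a vanishing bracket forces a subalgebra, contradicting membership in $\Theta(\mathfrak k)$.
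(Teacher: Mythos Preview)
Your proposal is correct and follows essentially the same approach as the paper: argue by contradiction, take a minimising sequence in $\Theta(\mathfrak k)$, pass to a subsequence of fixed dimension, extract a limit subspace $\mathfrak u_\infty$, show $\mathfrak u_\infty\oplus\mathfrak h$ is a proper subalgebra of $\mathfrak k$ and hence lies in some $\mathfrak k_{i_0}$, and contradict $\mathfrak u_\infty\cap\mathfrak l_{i_0}\ne\{0\}$. The only cosmetic difference is that the paper works with converging $Q$-orthonormal bases and explicit unit vectors $X_j\in\mathfrak u_j\cap\mathfrak l_i$ rather than invoking Grassmannian compactness and semicontinuity of the intersection dimension.
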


\begin{proof}
Assume the contrary. Then there exists a sequence $(\mathfrak u_j)_{j=1}^\infty\subset\Theta(\mathfrak k)$ such that
\begin{align}\label{eq_aux_lim_spaces}
\lim_{j\to\infty}\langle\mathfrak u_j\mathfrak u_j\mathfrak q_j\rangle=0,\qquad
\mathfrak q_j=\mathfrak k\ominus(\mathfrak u_j\oplus\mathfrak h).
\end{align}
The inclusion $(\mathfrak u_j)_{j=1}^\infty\subset\Theta(\mathfrak k)$ implies
\begin{align}\label{eq_aux_inters_noemp_j}
\mathfrak u_j\cap\mathfrak l_i\ne\{0\},\qquad j\in\mathbb N,~i=1,\ldots,r.
\end{align}
Replacing $(\mathfrak u_j)_{j=1}^\infty$ with a subsequence if necessary, we may assume that the dimension of $\mathfrak u_j$ is independent of~$j$. We denote this dimension by~$m$.

For every $j\in\mathbb N$, choose a $Q$-orthonormal basis $\mathcal E_j=(e_k^j)_{k=1}^m$ of the space $\mathfrak u_j$. The sequence $(\mathcal E_j)_{j=1}^\infty$ has a subsequence converging in $(\mathfrak k\ominus\mathfrak h)^m$ to some 
\begin{align*}
\mathcal E_\infty=(e_k^\infty)_{k=1}^m\in(\mathfrak k\ominus\mathfrak h)^m.
\end{align*}
Let $\mathfrak u_\infty$ be the linear span of~$\mathcal E_\infty$. One can verify that $\mathfrak u_\infty$ is $\Ad(H)$-invariant. Formula~\eqref{eq_aux_lim_spaces} implies
\begin{align*}
\langle\mathfrak u_\infty\mathfrak u_\infty\mathfrak q_\infty\rangle=0,\qquad
\mathfrak q_\infty=\mathfrak k\ominus(\mathfrak u_\infty\oplus\mathfrak h).
\end{align*}
Consequently, $\mathfrak u_\infty\oplus\mathfrak h$ must be a Lie subalgebra of~$\mathfrak k$. Because $(\mathfrak u_j)_{j=1}^\infty\subset\Theta(\mathfrak k)$,
\begin{align*}
\dim\mathfrak k\ominus\mathfrak h>\dim\mathfrak u_j=m=\dim\mathfrak u_\infty,\qquad j\in\mathbb N.
\end{align*}
Therefore, $\mathfrak u_\infty\oplus\mathfrak h$ is a \emph{proper} Lie subalgebra of~$\mathfrak k$. We conclude that $\mathfrak u_\infty\oplus\mathfrak h$ is contained in $\mathfrak k_i$ for some $i=1,\ldots,r$. Our next step is to show that this is impossible. The contradiction will complete the proof.

For every $j\in\mathbb N$, formula~\eqref{eq_aux_inters_noemp_j} yields the existence of a vector 
\begin{align*}
X_j\in\mathfrak u_j\cap\mathfrak l_1
\end{align*}
with $Q(X_j,X_j)=1$. The sequence $(X_j)_{j=1}^\infty$ has a subsequence converging to some~$X_\infty$ in~$\mathfrak k$. It is clear that
\begin{align*}
X_\infty\in\mathfrak u_\infty\cap\mathfrak l_1
\end{align*}
and $Q(X_\infty,X_\infty)=1$. Thus, $\mathfrak u_\infty$ is not contained in $\mathfrak k_1$. Similar arguments show that $\mathfrak u_\infty$ is not in $\mathfrak k_i$ for~$i=2,\ldots,r$.
\end{proof}

Our next result involves the sets $J_{\mathfrak k}$ and $\mathcal C(\mathfrak k,\tau)$ given by Lemmas~\ref{lem_k=m} and~\ref{lem_compact}. We also need the function $\alpha:(0,\infty)\to(0,\infty)$ defined by the formula
\begin{align}\label{def_alpha}
\alpha(\epsilon)=\bigg(\max\bigg\{1,-\frac{2s\lambda_-(B)}{\theta\lambda_-(T)}\epsilon\bigg\}\bigg)^{2^s-1}\epsilon,\qquad \epsilon>0,
\end{align}
where $s$ is the number of summands in~\eqref{m_decomp}.

\begin{lemma}\label{lem_new_two_dist_est}
Let the scalar product $g\in\mathcal M_T(\mathfrak k)$ and the decomposition~(\ref{m_decomp}) satisfy~(\ref{g_diag_decm}). Suppose $\mathcal J$ is a subset of $J_{\mathfrak k}$ such that the space
\begin{align*}
\mathfrak m_{\mathcal J}=\bigoplus_{u\in\mathcal J}\mathfrak m_u
\end{align*}
lies in $\Theta(\mathfrak k)$. Given $\epsilon>0$, assume $\lambda_+(g|_{\mathfrak m_{\mathcal J}})<\epsilon$ and $\hat S(g)>0$. Then $g$ lies in~$\mathcal C(\mathfrak k,\alpha(\epsilon))$.
\end{lemma}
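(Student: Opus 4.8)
The plan is to control the largest eigenvalue $\lambda_+(g)$ from above by propagating the bound $\lambda_+(g|_{\mathfrak m_{\mathcal J}}) < \epsilon$ through the algebra of intermediate subalgebras between $\mathfrak h$ and $\mathfrak k$, using the positivity hypothesis $\hat S(g) > 0$ as a brake. Write $g = \sum_{i \in J_{\mathfrak k}} x_i \pi_{\mathfrak m_i}^*Q$ as in~\eqref{g_diag_decm}, so that the hypothesis reads $x_u < \epsilon$ for all $u \in \mathcal J$, and the goal is to show $x_i < \alpha(\epsilon)$ for every $i \in J_{\mathfrak k}$. The key point is that, by Lemma~\ref{lem_hat_comp}, $\hat S(g) > 0$ forces
\begin{align*}
\frac12\sum_{i\in J_{\mathfrak k}}\frac{d_ib_i}{x_i} > \frac12\sum_{i\in J_{\mathfrak k}}\sum_{j,k\in J_{\mathfrak j}}\frac{[ijk]}{x_i} + \frac14\sum_{i,j,k\in J_{\mathfrak k}}[ijk]\frac{x_k}{x_ix_j},
\end{align*}
and in particular every term $[ijk]\frac{x_k}{x_ix_j}$ on the right is bounded by $\sum_{i\in J_{\mathfrak k}}\frac{2d_ib_i}{x_i} \le -\frac{2s\lambda_-(B)}{\lambda_-(g)}$ after using $b_i \le -\lambda_-(B)$, $d_i \le s$ (crudely), and the lower bound $\lambda_-(g) \ge \omega(\mathfrak k\ominus\mathfrak h)\lambda_-(T|_{\mathfrak k\ominus\mathfrak h}) \ge \lambda_-(T)$ from Lemma~\ref{lem_smpl_bd}. (I will need to be a little careful with the exact constants to land on the bound $-\frac{2s\lambda_-(B)}{\theta\lambda_-(T)}$ appearing in~\eqref{def_alpha}; this is where $\theta$ from Lemma~\ref{lem_inf_bdd} enters.)

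The central mechanism is an inductive ``spreading" argument. Suppose $\mathfrak v \oplus \mathfrak h$ is an intermediate space strictly between $\mathfrak m_{\mathcal J}\oplus\mathfrak h$ and $\mathfrak k$ on which all the $x_i$ are already known to be $< \delta$ for some $\delta$; I claim that one more summand $\mathfrak m_t$ can be absorbed at the cost of multiplying $\delta$ by $\max\{1, -\frac{2s\lambda_-(B)}{\theta\lambda_-(T)}\epsilon\}$. To see this, let $\mathfrak u$ be the $\Ad(H)$-invariant span of those summands already controlled; since $\mathfrak m_{\mathcal J} \subseteq \mathfrak u$ and $\mathfrak m_{\mathcal J} \in \Theta(\mathfrak k)$, the space $\mathfrak u$ still meets every $\mathfrak l_i$, so $\mathfrak u \in \Theta(\mathfrak k)$ (unless $\mathfrak u = \mathfrak k\ominus\mathfrak h$, in which case we are done). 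Hence $\mathfrak u \oplus \mathfrak h$ is \emph{not} a subalgebra, so by Lemma~\ref{lem_inf_bdd} we have $\langle \mathfrak u \mathfrak u \mathfrak q\rangle \ge \theta > 0$ with $\mathfrak q = \mathfrak k\ominus(\mathfrak u\oplus\mathfrak h)$; by~\eqref{<ijk>[ijk]} this gives some triple with $i,j \in \mathcal I_{\mathfrak u}$, $t \in \mathcal I_{\mathfrak q}$, and $[ijt] \ge \theta/(\text{number of triples})$. Feeding this into the $\hat S(g) > 0$ inequality at the term $[ijt]\frac{x_t}{x_ix_j}$ (symmetrising so that $x_t$ sits in the numerator) yields $x_t \le \frac{x_ix_j}{[ijt]}\cdot\big(-\frac{2s\lambda_-(B)}{\lambda_-(T)}\big) \le \delta^2 \cdot \frac{1}{\theta}\cdot\big(-\frac{2s\lambda_-(B)}{\lambda_-(T)}\big)$; since $\delta \le$ (something involving $\epsilon$ after the first step), this is at most $\delta \cdot \max\{1, -\frac{2s\lambda_-(B)}{\theta\lambda_-(T)}\epsilon\}$. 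Starting from $\delta = \epsilon$ on $\mathfrak m_{\mathcal J}$ and iterating at most $|J_{\mathfrak k}| - |\mathcal J| \le 2^s - 1$ times, every $x_i$ ends up bounded by $\big(\max\{1, -\frac{2s\lambda_-(B)}{\theta\lambda_-(T)}\epsilon\}\big)^{2^s-1}\epsilon = \alpha(\epsilon)$, which is exactly the claim $g \in \mathcal C(\mathfrak k, \alpha(\epsilon))$.

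I expect the main obstacle to be the bookkeeping in the inductive step: one must verify that at each stage the enlarged controlled space either equals $\mathfrak k\ominus\mathfrak h$ or still lies in $\Theta(\mathfrak k)$ (so that Lemma~\ref{lem_inf_bdd} applies and $\theta$ is available), and one must pin down which indices $i,j$ carry the large structure constant and confirm that the already-established bound on $x_i$ and $x_j$ is $\le \delta$ with the \emph{current} $\delta$ rather than a later one. A secondary nuisance is arranging the constants so that the crude estimates $b_i \le -\lambda_-(B)$, $d_i \le s$, $\omega(\mathfrak k\ominus\mathfrak h)\lambda_-(T|_{\mathfrak k\ominus\mathfrak h}) \ge \lambda_-(T)$, and the count of index triples combine to give precisely the coefficient $-\frac{2s\lambda_-(B)}{\theta\lambda_-(T)}$ in~\eqref{def_alpha}; some slack in the inequalities should make this routine, but it requires care. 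The positivity and symmetry of the structure constants $[ijk]$, together with the identity $\frac{x_t}{x_ix_j} + (\text{cyclic}) \ge$ isolated terms, are used throughout to keep all manipulated quantities of the right sign.
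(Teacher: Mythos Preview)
Your overall strategy is the paper's: enlarge the controlled index set one summand at a time, using Lemma~\ref{lem_inf_bdd} at each stage to locate a structure constant bounded below in terms of~$\theta$, and then use $\hat S(g)>0$ together with the trace bound from Lemma~\ref{lem_smpl_bd} to control the newly added~$x_t$.

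However, your inductive claim that ``one more summand $\mathfrak m_t$ can be absorbed at the cost of multiplying $\delta$ by $M:=\max\{1,-\tfrac{2s\lambda_-(B)}{\theta\lambda_-(T)}\epsilon\}$'' is wrong after the first step. The estimate you actually derive is $x_t \le C\delta^2$ with $C=-\tfrac{2s\lambda_-(B)}{\theta\lambda_-(T)}$. Since $C\epsilon\le M$, i.e.\ $C\le M/\epsilon$, this gives $x_t\le (M/\epsilon)\delta^2$; so if the current bound is $\delta=M^a\epsilon$, the new bound is $M^{2a+1}\epsilon$. The exponent therefore follows the pattern $0,1,3,7,\dots,2^k-1$ after $k$ steps --- it \emph{doubles}, not increments. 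Your claimed inequality $C\delta^2\le M\delta$ would require $C\delta\le M$, which already fails at the second step once $\delta=M\epsilon$ and $M>1$. Correspondingly, the iteration runs for at most $|J_{\mathfrak k}\setminus\mathcal J|\le s$ steps (each step adds one summand, so certainly not $2^s-1$ steps), and it is the doubling of the exponent over at most $s$ steps that produces the power $2^s-1$ in~$\alpha(\epsilon)$. Once you correct this growth pattern the argument goes through exactly as in the paper.

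Two smaller points. First, the inequality $d_i\le s$ is false in general ($d_i$ is a dimension, $s$ the number of summands); the factor of $s$ actually enters via pigeonhole on the uncontrolled index $t$, giving $\sum_{u,v\in\text{controlled}}[uvt]>\theta/s$ for some~$t$, while the trace is handled directly by Lemma~\ref{lem_smpl_bd} as $-\tfrac12\tr_gB\le-\tfrac{\lambda_-(B)}{2\lambda_-(T)}$. Second, isolating a single structure constant $[ijt]$ is unnecessary and muddies the constants: the paper keeps the full sum $\sum_{u,v}[uvt]\tfrac{x_t}{x_ux_v}\ge \tfrac{x_t}{\delta^2}\sum_{u,v}[uvt]>\tfrac{\theta}{s}\cdot\tfrac{x_t}{\delta^2}$, which lands cleanly on the coefficient in~\eqref{def_alpha}.
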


\begin{proof}
The inclusion $\mathfrak m_{\mathcal J}\in\Theta(\mathfrak k)$, Lemma~\ref{lem_inf_bdd} and formula~\eqref{<ijk>[ijk]} imply
\begin{align*}
\sum_{u,v\in\mathcal J}\sum_{w\in J_{\mathfrak k}\setminus\mathcal J}[uvw]\ge\theta>0.
\end{align*}
Consequently, there exists $i\in J_{\mathfrak k}\setminus\mathcal J$ such that
\begin{align*}
\sum_{u,v\in\mathcal J}[uvi]\ge\frac{\theta}{|J_{\mathfrak k}\setminus\mathcal J|}>\frac{\theta}s.
\end{align*}
According to Lemmas~\ref{lem_sc_comp} and~\ref{lem_smpl_bd},
\begin{align*}
\hat S(g)\le S(g)&\le-\frac12\tr_gB|_{\mathfrak k}-\frac14\sum_{u,v,q\in J_{\mathfrak k}}[uvq]\frac{x_q}{x_ux_v} \le
-\frac{\lambda_-(B)}{2\lambda_-(T)}-\frac14\sum_{u,v\in\mathcal J}[uvi]\frac{x_i}{x_ux_v}.
\end{align*}
Since
\begin{align}\label{est_aux_lambda_two}
\max_{u\in\mathcal J}x_u=\lambda_+(g|_{\mathfrak m_\mathcal J})<\epsilon
\end{align}
and $\hat S(g)>0$, the formula
\begin{align}\label{est_aux_xk}
x_i&\le
-4\frac{(\max_{u\in\mathcal J}x_u)^2}{\sum_{u,v\in\mathcal J}[uvi]}\bigg(\hat S(g)+\frac{\lambda_-(B)}{2\lambda_-(T)}\bigg) \notag \\
&<-\frac2{\sum_{u,v\in\mathcal J}[uvi]}\frac{\lambda_-(B)}{\lambda_-(T)}\epsilon^2<-\frac{2s\lambda_-(B)}{\theta\lambda_-(T)}\epsilon^2\le\max\bigg\{1,-\frac{2s\lambda_-(B)}{\theta\lambda_-(T)}\epsilon\bigg\}\epsilon
\end{align}
holds. Suppose $\mathfrak m_i\oplus\mathfrak m_{\mathcal J}\oplus\mathfrak h$ coincides with $\mathfrak k$. In this case,
\begin{align*}
\lambda_+(g)&=\max\Big\{x_i,\max_{u\in\mathcal J}x_u\Big\} \\ &<\max\bigg\{\max\bigg\{1,-\frac{2s\lambda_-(B)}{\theta\lambda_-(T)}\epsilon\bigg\}\epsilon,\epsilon\bigg\} 
\\ &=\max\bigg\{1,-\frac{2s\lambda_-(B)}{\theta\lambda_-(T)}\epsilon\bigg\}\epsilon\le\bigg(\max\bigg\{1,-\frac{2s\lambda_-(B)}{\theta\lambda_-(T)}\epsilon\bigg\}\bigg)^{2^s-1}\epsilon=\alpha(\epsilon).
\end{align*}
Thus, $g$ is in $\mathcal C(\mathfrak k,\alpha(\epsilon))$, and the assertion of the lemma holds.

Suppose $\mathfrak m_i\oplus\mathfrak m_{\mathcal J}\oplus\mathfrak h$ and $\mathfrak k$ are distinct. The inclusion $\mathfrak m_{\mathcal J}\in\Theta(\mathfrak k)$ implies $\mathfrak m_i\oplus\mathfrak m_{\mathcal J}\in\Theta(\mathfrak k)$. Employing Lemma~\ref{lem_inf_bdd} and formula~\eqref{<ijk>[ijk]}, we conclude that
\begin{align*}
\sum_{u,v\in\mathcal J\cup\{i\}}\sum_{w\in J_{\mathfrak k}\setminus(\mathcal J\cup\{i\})} [uvw]\ge\theta>0.
\end{align*}
This means there exists $j\in J_{\mathfrak k}\setminus(\mathcal J\cup\{i\})$ such that
\begin{align*}
\sum_{u,v\in \mathcal J\cup\{i\}}[uvj]\ge\frac{\theta}{|J_{\mathfrak k}\setminus(\mathcal J\cup\{i\})|}>\frac{\theta}s.
\end{align*}
Lemmas~\ref{lem_sc_comp} and~\ref{lem_smpl_bd} imply
\begin{align*}
\hat S(g)&\le
-\frac{\lambda_-(B)}{2\lambda_-(T)}-\frac14\sum_{u,v\in\mathcal J\cup\{i\}}[uvj]\frac{x_j}{x_ux_v}.
\end{align*}
In light of~\eqref{est_aux_lambda_two}, \eqref{est_aux_xk} and the assumption $\hat S(g)>0$, we conclude that
\begin{align*}
x_j<-\frac{2s\lambda_-(B)}{\theta\lambda_-(T)}\bigg(\max\bigg\{1,-\frac{2s\lambda_-(B)}{\theta\lambda_-(T)}\epsilon\bigg\}\epsilon\bigg)^2\le\bigg(\max\bigg\{1,-\frac{2s\lambda_-(B)}{\theta\lambda_-(T)}\epsilon\bigg\}\bigg)^3\epsilon.
\end{align*}
Let $\mathfrak m_i\oplus\mathfrak m_j\oplus\mathfrak m_{\mathcal J}\oplus\mathfrak h$ equal $\mathfrak k$. Then $s$ is no less than $|\mathcal J|+2>2$, and
\begin{align*}
\lambda_+(g)&=\max\Big\{x_i,x_j,\max_{u\in\mathcal J}x_u\Big\} \\ &<\max\bigg\{\max\bigg\{1,-\frac{2s\lambda_-(B)}{\theta\lambda_-(T)}\epsilon\bigg\}\epsilon,\bigg(\max\bigg\{1,-\frac{2s\lambda_-(B)}{\theta\lambda_-(T)}\epsilon\bigg\}\bigg)^3\epsilon,\epsilon\bigg\} 
\\ &=\bigg(\max\bigg\{1,-\frac{2s\lambda_-(B)}{\theta\lambda_-(T)}\epsilon\bigg\}\bigg)^3\epsilon\le\bigg(\max\bigg\{1,-\frac{2s\lambda_-(B)}{\theta\lambda_-(T)}\epsilon\bigg\}\bigg)^{2^s-1}\epsilon=\alpha(\epsilon).
\end{align*}
Thus, the assertion of the lemma holds.

Suppose $\mathfrak m_i\oplus\mathfrak m_j\oplus\mathfrak m_{\mathcal J}\oplus\mathfrak h$ and $\mathfrak k$ are distinct. The inclusion $\mathfrak m_{\mathcal J}\in\Theta(\mathfrak k)$ shows that $\mathfrak m_i\oplus\mathfrak m_j\oplus\mathfrak m_{\mathcal J}\in\Theta(\mathfrak k)$. Continuing to argue as above, we demonstrate that 
\begin{align*}
\lambda_+(g)&<\bigg(\max\bigg\{1,-\frac{2s\lambda_-(B)}{\theta\lambda_-(T)}\epsilon\bigg\}\bigg)^{2^{|J_{\mathfrak k}\setminus\mathcal J|}-1}\epsilon\le\bigg(\max\bigg\{1,-\frac{2s\lambda_-(B)}{\theta\lambda_-(T)}\epsilon\bigg\}\bigg)^{2^s-1}\epsilon=\alpha(\epsilon).
\end{align*}
This completes the proof.
\end{proof}

Denote
\begin{align*}
\mathfrak n_i=\mathfrak k_i\ominus\mathfrak h, \qquad i=1,\ldots,r.
\end{align*}
Lemma~\ref{lem_k=m} implies the existence of sets $J_{\mathfrak k_1},\ldots,J_{\mathfrak k_r}$ such that
\begin{align*}
\mathfrak n_i=\bigoplus_{j\in J_{\mathfrak k_i}}{\mathfrak m}_j,\qquad i=1,\ldots,r.
\end{align*}
It will be convenient for us to define
\begin{align*}
J_{\mathfrak l_i}=J_{\mathfrak k}\setminus J_{\mathfrak k_i},\qquad i=1,\ldots,r.
\end{align*}
Our next result shows that, roughly speaking, a scalar product $g\in\mathcal M_T(\mathfrak k)\setminus\mathcal C(\mathfrak k,\alpha(\epsilon))$ satisfying $\hat S(g)>0$ must be ``large" outside of $\mathfrak k_i$ for some $i=1,\ldots,r$. This result is an important ingredient in the proof of our key estimate for~$\hat S$.

\begin{lemma}\label{lem_index_incl}
Given $\epsilon>0$, consider $g\in\mathcal M_T(\mathfrak k) \setminus\mathcal C(\mathfrak k,\alpha(\epsilon))$ such that $\hat S(g)>0$. Assume the decomposition~(\ref{m_decomp}) satisfies~(\ref{g_diag_decm}). Then the set
\begin{align*}
\mathcal I(g,\epsilon)=\{j\in J_{\mathfrak k}\,|\,x_j<\epsilon\}
\end{align*}
is contained in $J_{\mathfrak k_i}$ for some $i=1,\ldots,r$.
\end{lemma}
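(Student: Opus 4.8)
The plan is to argue by contradiction, reducing everything to Lemma~\ref{lem_new_two_dist_est}, which does the real work. So I would assume that $\mathcal I(g,\epsilon)$ is \emph{not} contained in $J_{\mathfrak k_i}$ for any $i\in\{1,\ldots,r\}$ and derive that $g$ must lie in $\mathcal C(\mathfrak k,\alpha(\epsilon))$, contrary to hypothesis. Write $\mathcal J=\mathcal I(g,\epsilon)$ and $\mathfrak m_{\mathcal J}=\bigoplus_{u\in\mathcal J}\mathfrak m_u$.

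First I would check that $\mathcal J$ is nonempty and a proper subset of $J_{\mathfrak k}$. Nonemptiness is immediate, since the empty set is contained in every $J_{\mathfrak k_i}$, which would contradict the assumption. Properness follows because, if $\mathcal J=J_{\mathfrak k}$, then $x_j<\epsilon$ for all $j\in J_{\mathfrak k}$, whence $\lambda_+(g)=\max_{j\in J_{\mathfrak k}}x_j<\epsilon\le\alpha(\epsilon)$ (the inequality $\alpha(\epsilon)\ge\epsilon$ being clear from~\eqref{def_alpha}); this would place $g$ in $\mathcal C(\mathfrak k,\alpha(\epsilon))$, a contradiction. In particular $\mathfrak m_{\mathcal J}$ is a proper $\Ad(H)$-invariant subspace of $\mathfrak k\ominus\mathfrak h$.

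Next I would verify that $\mathfrak m_{\mathcal J}$ lies in $\Theta(\mathfrak k)$. By the definition of $\Theta(\mathfrak k)$, it remains to check that $\mathfrak m_{\mathcal J}\cap\mathfrak l_i\ne\{0\}$ for each $i=1,\ldots,r$. Since $\mathcal J$ is not contained in $J_{\mathfrak k_i}$, there is an index $j_i\in\mathcal J\setminus J_{\mathfrak k_i}=\mathcal J\cap J_{\mathfrak l_i}$; as $\mathfrak l_i=\bigoplus_{j\in J_{\mathfrak l_i}}\mathfrak m_j$ (the $i$-th instance of the third identity in~\eqref{eq_lsum_jsum}), the summand $\mathfrak m_{j_i}$ is contained in both $\mathfrak m_{\mathcal J}$ and $\mathfrak l_i$, so the intersection is nonzero.

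Finally, by the definition of $\mathcal J=\mathcal I(g,\epsilon)$ we have $\lambda_+(g|_{\mathfrak m_{\mathcal J}})=\max_{u\in\mathcal J}x_u<\epsilon$. Together with the hypothesis $\hat S(g)>0$ and the inclusion $\mathfrak m_{\mathcal J}\in\Theta(\mathfrak k)$ just established, Lemma~\ref{lem_new_two_dist_est} applies with this choice of $\mathcal J$ and yields $g\in\mathcal C(\mathfrak k,\alpha(\epsilon))$. This contradicts the assumption $g\in\mathcal M_T(\mathfrak k)\setminus\mathcal C(\mathfrak k,\alpha(\epsilon))$, completing the argument. There is no serious obstacle here: the only delicate points are the two elementary verifications above — that $\mathfrak m_{\mathcal J}$ is proper (which uses $\alpha(\epsilon)\ge\epsilon$ together with $g\notin\mathcal C(\mathfrak k,\alpha(\epsilon))$) and that it meets every $\mathfrak l_i$ — after which the statement is an immediate consequence of Lemma~\ref{lem_new_two_dist_est}.
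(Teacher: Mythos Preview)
Your proof is correct and follows essentially the same approach as the paper: both hinge on applying Lemma~\ref{lem_new_two_dist_est} to $\mathfrak m_{\mathcal I(g,\epsilon)}$ and using $\alpha(\epsilon)\ge\epsilon$ to rule out $\mathcal I(g,\epsilon)=J_{\mathfrak k}$. The only cosmetic difference is that the paper argues directly (concluding $\mathfrak m_{\mathcal I(g,\epsilon)}\notin\Theta(\mathfrak k)$ from $g\notin\mathcal C(\mathfrak k,\alpha(\epsilon))$ via the contrapositive of Lemma~\ref{lem_new_two_dist_est}, then deducing $\mathfrak m_{\mathcal I(g,\epsilon)}\cap\mathfrak l_i=\{0\}$ for some~$i$), whereas you package the same logic as a proof by contradiction.
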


\begin{proof}
Denote
\begin{align*}
\mathfrak m_{\mathcal I(g,\epsilon)}=\bigoplus_{j\in\mathcal I(g,\epsilon)}\mathfrak m_j.
\end{align*}
It is clear that 
\begin{align*}
\lambda_+(g|_{\mathfrak m_{\mathcal I(g,\epsilon)}})=\max_{j\in\mathcal I(g,\epsilon)}x_j<\epsilon.
\end{align*}
By assumption, $\hat S(g)$ is positive. The inclusion $g\in\mathcal M_T(\mathfrak k)\setminus\mathcal C(\mathfrak k,\alpha(\epsilon))$ and Lemma~\ref{lem_new_two_dist_est} imply that $\mathfrak m_{\mathcal I(g,\epsilon)}$ does not lie in~$\Theta(\mathfrak k)$. Therefore, either $\mathfrak m_{\mathcal I(g,\epsilon)}$ coincides with $\mathfrak k\ominus\mathfrak h$ or there exists $i=1,\ldots,r$ such that
\begin{align}\label{eq_mI_conta}
\mathfrak m_{\mathcal I(g,\epsilon)}\cap\mathfrak l_i=\{0\}.
\end{align}
In the former case, $\mathcal I(g,\epsilon)$ must equal $J_{\mathfrak k}$, and
\begin{align*}
\lambda_+(g)=\max_{j\in J_{\mathfrak k}}x_j=\max_{j\in\mathcal I(g,\epsilon)}x_j<\epsilon.
\end{align*}
On the other hand, the inclusion $g\in\mathcal M_T(\mathfrak k)\setminus\mathcal C(\mathfrak k,\alpha(\epsilon))$ yields
\begin{align*}
\lambda_+(g)>\alpha(\epsilon)\ge\epsilon.
\end{align*}
Thus, $\mathfrak m_{\mathcal I(g,\epsilon)}$ cannot coincide with $\mathfrak k\ominus\mathfrak h$. We conclude that there exists $i=1,\ldots,r$ satisfying~\eqref{eq_mI_conta}. For any such~$i$, the intersection $\mathcal I(g,\epsilon)\cap J_{\mathfrak l_i}$ is empty, which means $\mathcal I(g,\epsilon)\subset J_{\mathfrak k_i}$.
\end{proof}

Define functions $\beta:(0,\infty)\to(0,\infty)$ and $\kappa:(0,\infty)\to(0,\infty)$ by setting
\begin{align*}
\beta(\epsilon)=-\frac{n\lambda_-(B)-1}{2\epsilon},\qquad \kappa(\epsilon)=\alpha(\beta(\epsilon)),\qquad \epsilon>0,
\end{align*}
where $n$ is the dimension of $M$ and $\alpha(\cdot)$ is given by~\eqref{def_alpha}. We are now ready to state our key estimate on~$\hat S$.

\begin{lemma}\label{lem_eps+max}
Given $\epsilon>0$, the formula
\begin{align}\label{est_eps+max}
\hat S(g)\le\epsilon+\max_{i=1,\ldots,r}\sup\big\{\hat S(h)\,\big|\,h\in\mathcal M_T(\mathfrak k_i)\big\}
\end{align}
holds for every $g\in\mathcal M_T(\mathfrak k)\setminus\mathcal C(\mathfrak k,\kappa(\epsilon))$.
\end{lemma}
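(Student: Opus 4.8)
The plan is to reduce to the case $\hat S(g)>0$ and then paste together Lemma~\ref{lem_index_incl}, which forces all the ``small'' eigendirections of such a $g$ to lie inside a single $\mathfrak k_i$, with the splitting inequality of Lemma~\ref{lem_elem_id_est}.

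First I would dispose of the degenerate case. Since we are assuming $r\ge1$ and Lemma~\ref{lem_sup_pos}, applied with $\mathfrak k_i$ in place of $\mathfrak k$, gives $\sup\{\hat S(h)\,|\,h\in\mathcal M_T(\mathfrak k_i)\}\ge0$, the right-hand side of~\eqref{est_eps+max} is always at least $\epsilon>0$; hence~\eqref{est_eps+max} is automatic whenever $\hat S(g)\le0$. So from now on suppose $\hat S(g)>0$, and choose the decomposition~\eqref{m_decomp} so that~\eqref{g_diag_decm} holds. Because $g\notin\mathcal C(\mathfrak k,\kappa(\epsilon))=\mathcal C(\mathfrak k,\alpha(\beta(\epsilon)))$ and $\hat S(g)>0$, Lemma~\ref{lem_index_incl} with $\beta(\epsilon)$ playing the role of $\epsilon$ produces an index $i\in\{1,\dots,r\}$ with $\{j\in J_{\mathfrak k}\,|\,x_j<\beta(\epsilon)\}\subset J_{\mathfrak k_i}$; equivalently, $x_j\ge\beta(\epsilon)$ for every $j\in J_{\mathfrak l_i}$. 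Lemma~\ref{lem_elem_id_est} applied with $\mathfrak k'=\mathfrak k_i$ (so that $\mathfrak n=\mathfrak n_i$ and $\mathfrak l=\mathfrak l_i$) then gives $\hat S(g)\le\hat S(g|_{\mathfrak n_i})+S(g|_{\mathfrak l_i})$.

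It remains to bound the two summands. For the $\mathfrak l_i$-term I would use $S(g|_{\mathfrak l_i})\le-\tfrac12\tr_gB|_{\mathfrak l_i}=\tfrac12\sum_{j\in J_{\mathfrak l_i}}\frac{d_jb_j}{x_j}\le\frac1{2\beta(\epsilon)}\sum_{j=1}^sd_jb_j$ together with $\sum_{j=1}^sd_jb_j=-\tr_QB|_{\mathfrak m}\le-n\lambda_-(B)\le-(n\lambda_-(B)-1)$; by the very definition of $\beta$ this yields exactly $S(g|_{\mathfrak l_i})\le\epsilon$. For the $\mathfrak n_i$-term, note $g|_{\mathfrak n_i}\in\mathcal M(\mathfrak k_i)$ but is generally not trace-normalised: putting $t=\tr_{g|_{\mathfrak n_i}}T|_{\mathfrak n_i}$, we have $0<t=1-\tr_gT|_{\mathfrak l_i}<1$ because $T$ is positive-definite and $\mathfrak n_i,\mathfrak l_i\ne\{0\}$, so $h:=t\,g|_{\mathfrak n_i}\in\mathcal M_T(\mathfrak k_i)$. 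Since $\hat S$ on $\mathcal M(\mathfrak k_i)$ is homogeneous of degree $-1$ (immediate from Lemma~\ref{lem_hat_comp}), one has $\hat S(g|_{\mathfrak n_i})=t\,\hat S(h)$. If $\hat S(h)\ge0$ this is $\le\hat S(h)\le\sup\{\hat S(h')\,|\,h'\in\mathcal M_T(\mathfrak k_i)\}$, while if $\hat S(h)<0$ it is $<0\le\sup\{\hat S(h')\,|\,h'\in\mathcal M_T(\mathfrak k_i)\}$ by Lemma~\ref{lem_sup_pos}. Either way $\hat S(g|_{\mathfrak n_i})\le\sup\{\hat S(h')\,|\,h'\in\mathcal M_T(\mathfrak k_i)\}$, and combining with the $\mathfrak l_i$-bound gives~\eqref{est_eps+max}.

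The real content of the argument sits in Lemmas~\ref{lem_index_incl} and~\ref{lem_elem_id_est}, so the present proof is largely assembly; the only steps requiring genuine care are the trace-renormalisation of $g|_{\mathfrak n_i}$ together with the case analysis on the sign of $\hat S(h)$, and the bookkeeping check that the constant $\beta(\epsilon)$ is calibrated precisely so as to make the $\mathfrak l_i$-term no larger than $\epsilon$.
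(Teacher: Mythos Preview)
Your proof is correct and follows essentially the same approach as the paper's: reduce to $\hat S(g)>0$ via Lemma~\ref{lem_sup_pos}, invoke Lemma~\ref{lem_index_incl} with threshold $\beta(\epsilon)$, split via Lemma~\ref{lem_elem_id_est}, bound the $\mathfrak l_i$-piece using $x_j\ge\beta(\epsilon)$ together with the definition of $\beta$, and renormalise $g|_{\mathfrak n_i}$ to land in $\mathcal M_T(\mathfrak k_i)$. Your explicit sign case analysis on $\hat S(h)$ is slightly more careful than the paper, which simply writes $\psi_i\sup\{\hat S\}\le\sup\{\hat S\}$ using Lemma~\ref{lem_sup_pos} implicitly.
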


\begin{remark}
Lemma~\ref{lem_smpl_bd} implies that the set
\begin{align*}
\big\{\hat S(h)\,\big|\,h\in\mathcal M_T(\mathfrak k_i)\big\}
\end{align*}
is bounded above for every $i=1,\ldots,r$. Therefore, 
the quantity on the right-hand of~\eqref{est_eps+max} is always finite.
\end{remark}

\begin{proof}[Proof of Lemma~\ref{lem_eps+max}]
Choose $g\in\mathcal M_T(\mathfrak k)\setminus\mathcal C(\mathfrak k,\kappa(\epsilon))$. We will show that~\eqref{est_eps+max} holds for~$g$. 
Without loss of generality, suppose the decomposition~\eqref{m_decomp} satisfies~\eqref{g_diag_decm}; cf.~\cite[page~180]{MWWZ86}. If $\hat S(g)\le0$, then~\eqref{est_eps+max} follows from Lemma~\ref{lem_sup_pos}. Thus, we may assume $\hat S(g)>0$. Throughout the remainder of the proof, we fix $i$ with $\mathcal I(g,\beta(\epsilon))\subset J_{\mathfrak k_i}$. Such an $i$ exists by Lemma~\ref{lem_index_incl}. It is clear that $J_{\mathfrak l_i}$ is contained in $J_{\mathfrak k}\setminus\mathcal I(g,\beta(\epsilon))$.
 
According to Lemmas~\ref{lem_elem_id_est} and~\ref{lem_sc_comp},
\begin{align*}
\hat S(g)&\le 
\hat S(g|_{\mathfrak n_i})+S(g|_{\mathfrak l_i}) \\ &\le\hat S(g|_{\mathfrak n_i})+\frac12\sum_{j\in J_{\mathfrak l_i}}\frac{d_jb_j}{x_j} \le\hat S(g|_{\mathfrak n_i})-\frac{\lambda_-(B)}2\sum_{j\in J_{\mathfrak l_i}}\frac{d_j}{x_j}\le \hat S(g|_{\mathfrak n_i})-\frac{n\lambda_-(B)}{2\min_{j\in J_{\mathfrak l_i}}x_j}.
\end{align*}
Recalling the definition of $\mathcal I(g,\beta(\epsilon))$, we find
\begin{align*}
\min_{j\in J_{\mathfrak l_i}}x_j\ge\min_{j\in J_{\mathfrak k}\setminus\mathcal I(g,\beta(\epsilon))}x_j\ge\beta(\epsilon).
\end{align*}
Therefore,
\begin{align*}
\hat S(g)&\le \hat S(g|_{\mathfrak n_i})-\frac{n\lambda_-(B)}{2\beta(\epsilon)}<\hat S(g|_{\mathfrak n_i})+\epsilon.
\end{align*}
Let us show that
\begin{align*}
\hat S(g|_{\mathfrak n_i})\le\sup\big\{\hat S(h)\,\big|\,h\in\mathcal M_T(\mathfrak k_i)\big\}.
\end{align*}
Inequality~\eqref{est_eps+max} will follow immediately. If $\psi_i=\tr_gT|_{\mathfrak n_i}$, then
\begin{align*}
\tr_{\psi_ig}T|_{\mathfrak n_i}=\frac1{\psi_i}\tr_gT|_{\mathfrak n_i}=1,
\end{align*}
which means the scalar product $\psi_ig|_{\mathfrak n_i}$ lies in~$\mathcal M_T(\mathfrak k_i)$. Keeping in mind that $g\in\mathcal M_T(\mathfrak k)$, we estimate
\begin{align*}
\psi_i=\tr_gT|_{\mathfrak n_i}<\tr_gT|_{\mathfrak k\ominus\mathfrak h}=1.
\end{align*}
As a consequence,
\begin{align*}
\hat S(g|_{\mathfrak n_i})=\psi_i\hat S(\psi_ig|_{\mathfrak n_i})\le\psi_i\sup\big\{\hat S(h)\,\big|\,h\in\mathcal M_T(\mathfrak k_i)\big\}<\sup\big\{\hat S(h)\,\big|\,h\in\mathcal M_T(\mathfrak k_i)\big\}.
\end{align*}
\end{proof}

Our goal in Subsection~\ref{subsec_ex_glob_max} will be to show that $\hat S$ has a global maximum on $\mathcal M_T(\mathfrak k)$ under the assumptions of Theorem~\ref{thm_PRC}. We will do so using induction in the dimension of~$\mathfrak k$. The following lemma will help us prove the inductive step. As above, we define $\mathfrak j$ and $J_{\mathfrak j}$ by the first formulas in~\eqref{def_funny_subsp} and~\eqref{def_diff_Js}. It will be convenient for us to set
\begin{align*}
\mathfrak j_i=\mathfrak g\ominus\mathfrak k_i,\qquad J_{\mathfrak j_i}=J_{\mathfrak g}\setminus J_{\mathfrak k_i},\qquad i=1,\ldots,r.
\end{align*}

\begin{lemma}\label{lem_step}
Assume that the following statements are satisfied for each $i=1,\ldots,r$:
\begin{enumerate}
\item
The restriction of $\hat S$ to $\mathcal M_T(\mathfrak k_i)$ has a global maximum.
\item
The inequality
\begin{align*}
\frac{\lambda_-(T|_{\mathfrak n_i})}{\tr_Q T|_{\mathfrak l_i}}>\eta(\mathfrak k,\mathfrak k_i)
\end{align*}
holds.
\end{enumerate}
Then the restriction of $\hat S$ to $\mathcal M_T(\mathfrak k)$ has a global maximum.
\end{lemma}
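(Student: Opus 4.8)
The plan is to show that the supremum $v=\sup\{\hat S(g)\mid g\in\mathcal M_T(\mathfrak k)\}$ --- which is finite by Lemma~\ref{lem_smpl_bd} and non-negative by Lemma~\ref{lem_sup_pos} --- is attained. Write $M_i=\max\{\hat S(h)\mid h\in\mathcal M_T(\mathfrak k_i)\}$; this maximum exists by statement~1, and $M_i\ge0$ by Lemma~\ref{lem_sup_pos} applied to $\mathfrak k_i$. The whole argument hinges on the \emph{strict} inequality $v>\max_{i=1,\dots,r}M_i$. Once it is established, pick $\epsilon>0$ with $\epsilon+\max_iM_i<v$. By Lemma~\ref{lem_eps+max} (together with the fact that $\sup\{\hat S(h)\mid h\in\mathcal M_T(\mathfrak k_i)\}=M_i$ for each $i$), one has $\hat S(g)<v$ for every $g\in\mathcal M_T(\mathfrak k)\setminus\mathcal C(\mathfrak k,\kappa(\epsilon))$; hence $v$ equals the supremum of $\hat S$ over $\mathcal C(\mathfrak k,\kappa(\epsilon))$, a set that is compact by Lemma~\ref{lem_compact}, and the continuity of $\hat S$ yields that $v$ is attained.

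To prove $v>\max_iM_i$ it suffices, as $r<\infty$, to exhibit for each fixed $i$ some $g\in\mathcal M_T(\mathfrak k)$ with $\hat S(g)>M_i$. Choose $h_i\in\mathcal M_T(\mathfrak k_i)$ with $\hat S(h_i)=M_i$ and a decomposition~\eqref{m_decomp} that diagonalises $h_i$ on $\mathfrak n_i$. For $t>\tr_QT|_{\mathfrak l_i}$, let $g_t\in\mathcal M(\mathfrak k)$ be the $Q$-orthogonal sum of $\mu_t\,h_i$ on $\mathfrak n_i$ and $t\,Q|_{\mathfrak l_i}$ on $\mathfrak l_i$, where $\mu_t=t\,(t-\tr_QT|_{\mathfrak l_i})^{-1}$ is chosen so that $\tr_{g_t}T|_{\mathfrak k\ominus\mathfrak h}=1$, i.e.\ $g_t\in\mathcal M_T(\mathfrak k)$. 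Using the identity in Lemma~\ref{lem_elem_id_est}, the scaling law $\hat S(\mu h)=\mu^{-1}\hat S(h)$, and the vanishing of $[abc]$ whenever two of the indices $a,b,c$ label summands lying in $\mathfrak k$ while the third labels a summand lying in $\mathfrak j=\mathfrak g\ominus\mathfrak k$ (which holds because $\mathfrak k$ is a subalgebra and $Q$ is bi-invariant --- cf.\ Lemma~\ref{lem_vanish_brak}), one checks that the cross terms between the $\mathfrak n_i$- and $\mathfrak l_i$-blocks either cancel in pairs by the symmetry of $[abc]$ or vanish, and that
\begin{align*}
\hat S(g_t)=M_i+\frac1t\left(-\tr_QT|_{\mathfrak l_i}\cdot M_i+\tfrac14\bigl({-}2\tr_QB|_{\mathfrak l_i}-\langle\mathfrak l_i\mathfrak l_i\mathfrak l_i\rangle-2\langle\mathfrak l_i\mathfrak j\mathfrak j\rangle\bigr)\right)+O(t^{-2}).
\end{align*}
Thus, if the coefficient of $t^{-1}$ is positive, then $\hat S(g_t)>M_i$ for all large $t$, so $v\ge\hat S(g_t)>M_i$.

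It remains to prove the positivity of that coefficient. Set $N_i={-}2\tr_QB|_{\mathfrak n_i}-2\langle\mathfrak n_i\mathfrak j_i\mathfrak j_i\rangle-\langle\mathfrak n_i\mathfrak n_i\mathfrak n_i\rangle$ and $L_i={-}2\tr_QB|_{\mathfrak l_i}-\langle\mathfrak l_i\mathfrak l_i\mathfrak l_i\rangle-2\langle\mathfrak l_i\mathfrak j\mathfrak j\rangle$; by Lemma~\ref{lem_eta_wdpos}, $N_i\ge0$, $L_i>0$, and $\eta(\mathfrak k,\mathfrak k_i)=N_i\,\bigl(\omega(\mathfrak n_i)L_i\bigr)^{-1}$, while the coefficient in question is $-\tr_QT|_{\mathfrak l_i}\cdot M_i+\tfrac14L_i$. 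The last ingredient is the a priori bound
\begin{align*}
M_i\le\frac{N_i}{4\,\omega(\mathfrak n_i)\,\lambda_-(T|_{\mathfrak n_i})},
\end{align*}
which I would obtain by estimating $\hat S(h_i)$ via Lemma~\ref{lem_hat_comp}: applying the elementary inequality $\sum_{a,b,c\in J_{\mathfrak k_i}}[abc]\,y_cy_a^{-1}y_b^{-1}\ge\sum_{a,b,c\in J_{\mathfrak k_i}}[abc]\,y_b^{-1}$, the Casimir relation~\eqref{Casimir}, and the vanishing of $[abc]$ when two indices label summands in $\mathfrak k_i$ and the third labels one in $\mathfrak g\ominus\mathfrak k_i$, one reduces $\hat S(h_i)$ to $\sum_{a\in J_{\mathfrak k_i}}y_a^{-1}\bigl(d_a\zeta_a+\tfrac14\sum_{b,c\in J_{\mathfrak k_i}}[abc]\bigr)$, whose summands are non-negative and total $\tfrac14N_i$ by Lemma~\ref{lem_eta_wdpos}; finally one uses $\lambda_-(h_i)\ge\omega(\mathfrak n_i)\lambda_-(T|_{\mathfrak n_i})$ from Lemma~\ref{lem_smpl_bd}. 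Inserting this bound and using $\tr_QT|_{\mathfrak l_i}>0$ gives $-\tr_QT|_{\mathfrak l_i}\cdot M_i+\tfrac14L_i\ge\bigl(\omega(\mathfrak n_i)\lambda_-(T|_{\mathfrak n_i})L_i-\tr_QT|_{\mathfrak l_i}\cdot N_i\bigr)\bigl(4\,\omega(\mathfrak n_i)\,\lambda_-(T|_{\mathfrak n_i})\bigr)^{-1}$, and since $L_i>0$ the right-hand side is positive exactly when $\lambda_-(T|_{\mathfrak n_i})/\tr_QT|_{\mathfrak l_i}>N_i\,\bigl(\omega(\mathfrak n_i)L_i\bigr)^{-1}=\eta(\mathfrak k,\mathfrak k_i)$, which is statement~2. (If $N_i=0$, then $M_i=0$ by the bound together with Lemma~\ref{lem_sup_pos}, and positivity is immediate.)

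I expect the two explicit computations above to be the main work: pinning down the exact $t^{-1}$-coefficient of $\hat S(g_t)$ --- which requires careful tracking of the cross terms in~\eqref{elem_eq} and of the cancellations just described --- and proving the sharp a priori bound on $M_i$; both rest on the combinatorial identities of Lemma~\ref{lem_eta_wdpos} and on~\eqref{Casimir}. Once $v>\max_iM_i$ is in hand, producing a global maximum via Lemmas~\ref{lem_eps+max} and~\ref{lem_compact} is routine.
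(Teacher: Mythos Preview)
Your proposal is correct and follows essentially the same route as the paper's proof: the same one-parameter family $g_t$ extending the maximiser on $\mathcal M_T(\mathfrak k_i)$ by $tQ$ on $\mathfrak l_i$, the same upper bound $4M_i\le N_i/(\omega(\mathfrak n_i)\lambda_-(T|_{\mathfrak n_i}))$ derived via Lemma~\ref{lem_hat_comp}, the Casimir identity~\eqref{Casimir} and Lemma~\ref{lem_smpl_bd}, and the same reduction to compactness through Lemma~\ref{lem_eps+max}. The only cosmetic difference is that you read off the $t^{-1}$-coefficient of $\hat S(g_t)$ directly from an asymptotic expansion, whereas the paper computes $\lim_{t\to\infty}t^2\frac{d}{dt}\hat S(g(t))$; these are equivalent. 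One small caution: your phrase ``the cross terms either cancel in pairs or vanish'' is slightly imprecise --- the cross contributions coming from the last sum in~\eqref{elem_eq} do not literally cancel but rather combine with $S(h_i)$ and the $\mathfrak j$-term to reconstitute $\hat S(h_i)$ at order $t^0$ (and similarly at order $t^{-1}$); the computation you outline nonetheless yields exactly the expansion you claim.
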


\begin{proof}
Fix an index $i$ such that
\begin{align*}
\sup\big\{\hat S(h)\,\big|\,h\in\mathcal M_T(\mathfrak k_i)\big\}=\max_{j=1,\ldots,r}\sup\big\{\hat S(h)\,\big|\,h\in\mathcal M_T(\mathfrak k_j)\big\}.
\end{align*}
By hypothesis, there exists $g_0\in\mathcal M_T(\mathfrak k_i)$ satisfying
\begin{align*}
\hat S(g_0)=\sup\big\{\hat S(h)\,\big|\,h\in\mathcal M_T(\mathfrak k_i)\big\}.
\end{align*}
Without loss of generality, suppose the decomposition~\eqref{m_decomp} is such that
\begin{align*}
g_0=\sum_{j\in J_{\mathfrak k_i}}y_j\pi_{\mathfrak m_j}^*Q,\qquad y_j>0.
\end{align*}
Given $t>\tr_QT|_{\mathfrak l_i}$, define $g(t)\in\mathcal M_T(\mathfrak k)$ by the formulas
\begin{align*}
g(t)=\sum_{j\in J_{\mathfrak k_i}}\phi(t)y_j\pi_{\mathfrak m_j}^*Q+\sum_{j\in J_{\mathfrak l_i}}t\pi_{\mathfrak m_j}^*Q,
\qquad
\phi(t)=\frac t{t-\tr_QT|_{\mathfrak l_i}}.
\end{align*}
We will show that $\hat S(g(t))>\hat S(g_0)$ for some~$t$. Together with Lemma~\ref{lem_eps+max}, this will imply the existence of a global maximum of $\hat S$ on $\mathcal M_T(\mathfrak k)$.

Using~\eqref{eq_sc_comp},~\eqref{eq_hat_comp} and the first line in~\eqref{elem_eq}, we compute
\begin{align*}
\lim_{t\to\infty}\hat S(g(t))&=\lim_{t\to\infty}\big(S(g(t)|_{\mathfrak n_i})+S(g(t)|_{\mathfrak l_i})\big) \\
&\hphantom{=}~-\frac12\lim_{t\to\infty}\bigg(\sum_{j\in J_{\mathfrak k_i}}\sum_{k,l\in J_{\mathfrak j}}\frac{[jkl]}{\phi(t)y_j}+\sum_{j\in J_{\mathfrak l_i}}\sum_{k,l\in J_{\mathfrak j}}\frac{[jkl]}t\bigg) 
\\
&\hphantom{=}~-\frac14\lim_{t\to\infty}\sum_{j,k\in J_{\mathfrak l_i}}\sum_{l\in J_{\mathfrak k_i}}[jkl]\bigg(\frac{\phi(t)y_l}{t^2}+\frac2{\phi(t)y_l}\bigg) \\
&=\lim_{t\to\infty}\bigg(\frac{S(g_0)}{\phi(t)}+\frac12\sum_{j\in J_{\mathfrak l_i}}\frac{d_jb_j}t-\frac14\sum_{j,k,l\in J_{\mathfrak l_i}}\frac{[jkl]}t\bigg) \\
&\hphantom{=}~-\frac12\sum_{j\in J_{\mathfrak k_i}}\sum_{k,l\in J_{\mathfrak j}}\frac{[jkl]}{y_j}-\frac12\sum_{j,k\in J_{\mathfrak l_i}}\sum_{l\in J_{\mathfrak k_i}}\frac{[jkl]}{y_l} \\
&=S(g_0)-\frac12\sum_{j\in J_{\mathfrak k_i}}\sum_{k,l\in J_{\mathfrak j_i}}\frac{[jkl]}{y_j}=\hat S(g_0).
\end{align*}
To prove that $\hat S(g(t))>\hat S(g_0)$ for some~$t$, it suffices to demonstrate that $\frac d{dt}\hat S(g(t))<0$ when $t$ is large. Observe that
\begin{align*}
\frac d{dt}\phi(t)=-\frac{\tr_QT|_{\mathfrak l_i}}{(t-\tr_QT|_{\mathfrak l_i})^2},\qquad \frac d{dt}\frac1{\phi(t)}=\frac{\tr_QT|_{\mathfrak l_i}}{t^2},\qquad \frac d{dt}\frac{\phi(t)}{t^2}=-\frac{2t-\tr_QT|_{\mathfrak l_i}}{(t^2-t\tr_QT|_{\mathfrak l_i})^2}.
\end{align*}
Computing as above and utilising~\eqref{trace_bd},~\eqref{<ijk>[ijk]} and Lemma~\ref{lem_vanish_brak}, we obtain
\begin{align*}
\frac d{dt}\hat S(g(t))&=\frac d{dt}\bigg(\frac{S(g_0)}{\phi(t)}+\frac12\sum_{j\in J_{\mathfrak l_i}}\frac{d_jb_j}t-\frac14\sum_{j,k,l\in J_{\mathfrak l_i}}\frac{[jkl]}t
\bigg) \\
&\hphantom{=}~-
\frac12\frac d{dt}\bigg(\sum_{j\in J_{\mathfrak k_i}}\sum_{k,l\in J_{\mathfrak j}}\frac{[jkl]}{\phi(t)y_j}+\sum_{j\in J_{\mathfrak l_i}}\sum_{k,l\in J_{\mathfrak j}}\frac{[jkl]}t\bigg) 
\\
&\hphantom{=}~-\frac14\frac d{dt}\sum_{j,k\in J_{\mathfrak l_i}}\sum_{l\in J_{\mathfrak k_i}}[jkl]\bigg(\frac{\phi(t)y_l}{t^2}+\frac2{\phi(t)y_l}\bigg)
\\
&=\frac{S(g_0)\tr_QT|_{\mathfrak l_i}}{t^2}+\frac{\tr_QB|_{\mathfrak l_i}}{2t^2}+\frac{\langle\mathfrak l_i\mathfrak l_i\mathfrak l_i\rangle}{4t^2} -\frac{\tr_QT|_{\mathfrak l_i}}{2t^2}\bigg(\sum_{j\in J_{\mathfrak k_i}}\sum_{k,l\in J_{\mathfrak j}}\frac{[jkl]}{y_j}\bigg)
\\
&\hphantom{=}~+\frac1{2t^2}\langle \mathfrak l_i\mathfrak j\mathfrak j\rangle -\frac14\sum_{j,k\in J_{\mathfrak l_i}}\sum_{l\in J_{\mathfrak k_i}}[jkl]\bigg(-\frac{2t-\tr_QT|_{\mathfrak l_i}}{(t^2-t\tr_QT|_{\mathfrak l_i})^2}y_l+\frac{2\tr_QT|_{\mathfrak l_i}}{t^2y_l}\bigg)
\\
&=
\frac{\hat S(g_0)\tr_QT|_{\mathfrak l_i}}{t^2}+\frac{\tr_QB|_{\mathfrak l_i}}{2t^2}+
\frac{\langle \mathfrak l_i\mathfrak l_i\mathfrak l_i\rangle}{4t^2}+\frac1{2t^2}\langle\mathfrak l_i\mathfrak j\mathfrak j\rangle
\\
&\hphantom{=}~+\frac{2t-\tr_QT|_{\mathfrak l_i}}{4(t^2-t\tr_QT|_{\mathfrak l_i})^2}\sum_{j,k\in J_{\mathfrak l_i}}\sum_{l\in J_{\mathfrak k_i}}[jkl]y_l.
\end{align*}
It is obvious that $\frac d{dt}\hat S(g(t))<0$ if and only if $t^2\frac d{dt}\hat S(g(t))<0$. Thus, to prove that $\frac d{dt}\hat S(g(t))<0$ for large~$t$, it suffices to show that
\begin{align}\label{lim_zero}
\lim_{t\to\infty}t^2\frac d{dt}\hat S(g(t))<0.
\end{align}
Using the above expression for $\frac d{dt}\hat S(g(t))$, we calculate
\begin{align*}
4\lim_{t\to\infty}t^2\frac d{dt}\hat S(g(t))&=
4\hat S(g_0)\tr_QT|_{\mathfrak l_i}+2\tr_QB|_{\mathfrak l_i}+
\langle\mathfrak l_i\mathfrak l_i\mathfrak l_i\rangle+2\langle \mathfrak l_i\mathfrak j\mathfrak j\rangle.
\end{align*}
Lemmas~\ref{lem_hat_comp} and~\ref{lem_smpl_bd}, along with~\eqref{trace_bd},~\eqref{<ijk>[ijk]} and Lemma~\ref{lem_vanish_brak}, imply
\begin{align*}
4\hat S(g_0)&=
2\sum_{j\in J_{\mathfrak k_i}}\frac{d_jb_j}{y_j}-2\sum_{j\in J_{\mathfrak k_i}}\sum_{k,l\in J_{\mathfrak j_i}}\frac{[jkl]}{y_j}-\sum_{j,k,l\in J_{\mathfrak k_i}}[jkl]\frac{y_l}{y_jy_k}
\\
&=
2\sum_{j\in J_{\mathfrak k_i}}\frac{d_jb_j}{y_j}-2\sum_{j\in J_{\mathfrak k_i}}\sum_{k,l\in J_{\mathfrak j_i}}\frac{[jkl]}{y_j}-\frac12\sum_{j,k,l\in J_{\mathfrak k_i}}\frac{[jkl]}{y_j}\Big(\frac{y_l}{y_k}+\frac{y_k}{y_l}\Big)
\\
&\le
\sum_{j\in J_{\mathfrak k_i}}\frac1{y_j}\bigg(2d_jb_j-2\sum_{k,l\in J_{\mathfrak j_i}}[jkl]-\sum_{k,l\in J_{\mathfrak k_i}}[jkl]\bigg)
\\
&\le
\frac1{\lambda_-(g_0)}\bigg(2\sum_{j\in J_{\mathfrak k_i}}d_jb_j-2\sum_{j\in J_{\mathfrak k_i}}\sum_{k,l\in J_{\mathfrak j_i}}[jkl]-\sum_{j,k,l\in J_{\mathfrak k_i}}[jkl]\bigg)
\\ 
&\le\frac{-2\tr_QB|_{\mathfrak n_i}-2\langle\mathfrak n_i\mathfrak j_i\mathfrak j_i\rangle-\langle\mathfrak n_i\mathfrak n_i\mathfrak n_i\rangle}{\omega(\mathfrak n_i)\lambda_-(T|_{\mathfrak n_i})}.
\end{align*}
(The penultimate estimate exploits the formula
\begin{align*}
2d_jb_j-2\sum_{k,l\in J_{\mathfrak j_i}}[jkl]-\sum_{k,l\in J_{\mathfrak k_i}}[jkl]=4d_j\zeta_j+\sum_{k,l\in J_{\mathfrak k_i}}[jkl]\ge0,\qquad j\in J_{\mathfrak k_i},
\end{align*}
a consequence of~\eqref{Casimir}.) Therefore, to prove~\eqref{lim_zero}, it suffices to show that
\begin{align*}
\frac{-2\tr_QB|_{\mathfrak n_i}-2\langle\mathfrak n_i\mathfrak j_i\mathfrak j_i\rangle-\langle\mathfrak n_i\mathfrak n_i\mathfrak n_i\rangle}{\omega(\mathfrak n_i)\lambda_-(T|_{\mathfrak n_i})}\tr_QT|_{\mathfrak l_i}+2\tr_QB|_{\mathfrak l_i}+
\langle\mathfrak l_i\mathfrak l_i\mathfrak l_i\rangle+2\langle\mathfrak l_i\mathfrak j\mathfrak j\rangle<0.
\end{align*}
After some elementary transformations, this becomes
\begin{align*}
\frac{\lambda_-(T|_{\mathfrak n_i})}{\tr_QT|_{\mathfrak l_i}}>\frac{2\tr_QB|_{\mathfrak n_i}+2\langle\mathfrak n_i\mathfrak j_i\mathfrak j_i\rangle+\langle\mathfrak n_i\mathfrak n_i\mathfrak n_i\rangle}{\omega(\mathfrak n_i)(2\tr_QB|_{\mathfrak l_i}+\langle\mathfrak l_i\mathfrak l_i\mathfrak l_i\rangle+2\langle\mathfrak l_i\mathfrak j\mathfrak j\rangle)}=\eta(\mathfrak k,\mathfrak k_i),
\end{align*}
which is satisfied by hypothesis. Thus,~\eqref{lim_zero} holds, and $\frac d{dt}\hat S(g(t))<0$ for large~$t$. It is easy to establish the existence of $t_0>\tr_QT|_{\mathfrak l_i}$ such that
\begin{align*}
\hat S(g(t_0))>\lim_{t\to\infty}\hat S(g(t))=\hat S(g_0).
\end{align*}
Applying Lemma~\ref{lem_eps+max} with
\begin{align*}
\epsilon=\frac{\hat S(g(t_0))-\hat S(g_0)}2>0,
\end{align*}
we conclude that
$\hat S(g)<\hat S(g(t_0))$ for all $g\in\mathcal M_T(\mathfrak k)\setminus\mathcal C(\mathfrak k,\kappa(\epsilon))$. To complete the proof, we need to demonstrate that $\hat S$ has a global maximum on $\mathcal C(\mathfrak k,\kappa(\epsilon))$. However, this is an immediate consequence of the compactness of~$\mathcal C(\mathfrak k,\kappa(\epsilon))$.
\end{proof}

\begin{remark}\label{rem_proper}
The proof of the lemma shows that $\hat S(g(t))$ converges to $\hat S(g_0)$ as $t$ goes to infinity. Therefore, the inclusion
\begin{align*}
\hat S(\{g(t)\,|\,t\ge2\tr_QT|_{\mathfrak l_i}\})\subset\big[\hat S(g_0)-\sigma,\hat S(g_0)+\sigma\big]
\end{align*} 
holds for some $\sigma>0$. We conclude that the preimage of the interval $\big[\hat S(g_0)-\sigma,\hat S(g_0)+\sigma\big]$ under $\hat S$ has a non-compact intersection with $\mathcal M_T(\mathfrak k)$. This means the restriction of $\hat S$ to $\mathcal M_T(\mathfrak k)$ cannot be proper.
\end{remark}

\subsection{The existence of global maxima}\label{subsec_ex_glob_max}

As in Subsection~\ref{sec_finalise}, suppose $\mathfrak k$ is a Lie subalgebra of $\mathfrak g$ containing $\mathfrak h$ as a proper subset. Recall that $\mathfrak k$ must meet the requirements of Hypothesis~\ref{hyp_flag}. Our next goal is to prove by induction that $\hat S$ has a global maximum on $\mathcal M_T(\mathfrak k)$ under the assumptions of Theorem~\ref{thm_PRC}. The following result will enable us to take the basis step and help with the inductive step.

\begin{lemma}\label{lem_1st_step}
If $\mathfrak h$ is a maximal Lie subalgebra of~$\mathfrak k$, then there exists $g\in\mathcal M_T(\mathfrak k)$ such that $\hat S(g)\ge\hat S(h)$ for all $h\in\mathcal M_T(\mathfrak k)$.
\end{lemma}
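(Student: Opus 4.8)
The plan is to show that the supremum of $\hat S$ over $\mathcal M_T(\mathfrak k)$ is finite, non-negative, and actually attained, by proving a coercivity-type estimate that confines any near-maximiser to a compact subset of $\mathcal M_T(\mathfrak k)$. First I would set $\Sigma=\sup\{\hat S(h)\,|\,h\in\mathcal M_T(\mathfrak k)\}$ and record two facts: $\Sigma<\infty$ by the last estimate in Lemma~\ref{lem_smpl_bd} applied with $\mathfrak u=\mathfrak k\ominus\mathfrak h$ (which gives $\hat S(g)\le -\lambda_-(B)/(2\lambda_-(T))$ for all $g\in\mathcal M_T(\mathfrak k)$), and $\Sigma\ge0$ by Lemma~\ref{lem_sup_pos}, with the non-negativity witnessed by the metric $\psi Q|_{\mathfrak k\ominus\mathfrak h}$, where $\psi=\tr_QT|_{\mathfrak k\ominus\mathfrak h}$.

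The key step is a uniform upper bound on $\lambda_-(g)$ over $\mathcal M_T(\mathfrak k)$. Writing $g$ diagonally in a $Q$-orthonormal basis $(e_i)$ of $\mathfrak k\ominus\mathfrak h$ with eigenvalues $x_i>0$, the constraint $\tr_gT|_{\mathfrak k\ominus\mathfrak h}=1$ reads $\sum_iT(e_i,e_i)/x_i=1$, whence $1\le \dim(\mathfrak k\ominus\mathfrak h)\,\lambda_+(T|_{\mathfrak k\ominus\mathfrak h})/\lambda_-(g)$ and therefore $\lambda_-(g)\le\tau_1$ with $\tau_1:=\dim(\mathfrak k\ominus\mathfrak h)\,\lambda_+(T|_{\mathfrak k\ominus\mathfrak h})$. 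Since $\mathfrak h$ is maximal in $\mathfrak k$, Lemma~\ref{lem_est_max} applies with this $\tau_1$ and with $\tau_2:=\tau_1+1$: there are constants $A,D,b>0$ such that $S(g)\le A-D\lambda_+(g)^b$ whenever $\lambda_+(g)\ge\tau_2$. Because $\hat S(g)\le S(g)$, choosing $\tau_0\ge\tau_2$ large enough that $A-D\tau_0^b<0$ gives $\hat S(g)<0\le\Sigma$ for every $g\in\mathcal M_T(\mathfrak k)$ with $\lambda_+(g)>\tau_0$, i.e.\ for every $g\in\mathcal M_T(\mathfrak k)\setminus\mathcal C(\mathfrak k,\tau_0)$.

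It then remains to locate the supremum inside $\mathcal C(\mathfrak k,\tau_0)$. The metric $\psi Q|_{\mathfrak k\ominus\mathfrak h}$ has $\lambda_+=\psi\le\tau_1<\tau_0$, so it lies in $\mathcal C(\mathfrak k,\tau_0)$, and $\hat S(\psi Q|_{\mathfrak k\ominus\mathfrak h})\ge0$; combined with the estimate of the previous paragraph this forces $\sup\{\hat S(g)\,|\,g\in\mathcal C(\mathfrak k,\tau_0)\}=\Sigma$. Since $\mathcal C(\mathfrak k,\tau_0)$ is compact by Lemma~\ref{lem_compact} and $\hat S$ is continuous, this supremum is attained at some $g\in\mathcal C(\mathfrak k,\tau_0)\subset\mathcal M_T(\mathfrak k)$, and then $\hat S(g)=\Sigma\ge\hat S(h)$ for all $h\in\mathcal M_T(\mathfrak k)$.

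I expect the only non-routine point to be the coercivity estimate of the second paragraph: it relies both on the uniform bound $\lambda_-(g)\le\tau_1$ on $\mathcal M_T(\mathfrak k)$ (so that the constants produced by Lemma~\ref{lem_est_max} do not depend on $g$) and on the maximality of $\mathfrak h$ in $\mathfrak k$, which is precisely the hypothesis making Lemma~\ref{lem_est_max} available. Everything downstream — truncating to the sublevel set $\mathcal C(\mathfrak k,\tau_0)$ and invoking compactness and continuity of $\hat S$ — is soft.
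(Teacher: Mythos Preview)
Your proof is correct and follows essentially the same coercivity--compactness strategy as the paper: invoke Lemma~\ref{lem_est_max} (which requires the maximality of $\mathfrak h$ in $\mathfrak k$) to force any $g$ with large $\lambda_+(g)$ to have small $\hat S(g)$, then maximise over the compact set $\mathcal C(\mathfrak k,\tau)$ via Lemma~\ref{lem_compact}. The only differences are cosmetic: the paper obtains the sharper bounds $\lambda_-(h)\le\tr_QT|_{\mathfrak k\ominus\mathfrak h}\le\lambda_+(h)$ directly and applies Lemma~\ref{lem_est_max} with $\tau_1=\tau_2=\tr_QT|_{\mathfrak k\ominus\mathfrak h}$ to \emph{every} $h\in\mathcal M_T(\mathfrak k)$, and it uses an arbitrary reference point $h_0$ rather than your appeal to Lemma~\ref{lem_sup_pos} and the explicit witness $\psi Q|_{\mathfrak k\ominus\mathfrak h}$.
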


\begin{proof}
The formulas
\begin{align*}
\frac1{\lambda_+(h)}\tr_QT|_{\mathfrak k\ominus\mathfrak h}\le\tr_hT|_{\mathfrak k\ominus\mathfrak h}\le\frac1{\lambda_-(h)}\tr_QT|_{\mathfrak k\ominus\mathfrak h}
\end{align*}
and $\tr_hT|_{\mathfrak k\ominus\mathfrak h}=1$ hold whenever $h$ lies in~$\mathcal M_T(\mathfrak k)$. As a consequence,
\begin{align*}
\lambda_-(h)\le\tr_QT|_{\mathfrak k\ominus\mathfrak h}\le\lambda_+(h),\qquad h\in\mathcal M_T(\mathfrak k).
\end{align*}
Applying Lemma~\ref{lem_est_max} with $\tau_1=\tau_2=\tr_QT|_{\mathfrak k\ominus\mathfrak h}$, we find
\begin{align}\label{eq_max_one}
\hat S(h)\le S(h)\le A-D\lambda_+(h)^b,\qquad h\in\mathcal M_T(\mathfrak k),
\end{align}
where the constants $A>0$, $D>0$ and $b>0$ depend only on~$G$, $H$, $\mathfrak k$, $Q$ and $T$. Fix $h_0\in\mathcal M_T(\mathfrak k)$ and suppose
\begin{align*}
\tau=\bigg|\frac{A-\hat S(h_0)}D\bigg|^{\frac1b}+1>0.
\end{align*}
According to Lemma~\ref{lem_compact}, the set $\mathcal C(\mathfrak k,\tau)$ is compact in $\mathcal M_T(\mathfrak k)$. Consequently, there exists $g\in\mathcal C(\mathfrak k,\tau)$ such that $\hat S(g)\ge\hat S(h)$ for all $h\in\mathcal C(\mathfrak k,\tau)$.
Formula~\eqref{eq_max_one} implies that
$\hat S(h_0)>\hat S(h)$ if $h$ lies in $\mathcal M_T(\mathfrak k)\setminus\mathcal C(\mathfrak k,\tau)$. This means $h_0$ is in $\mathcal C(\mathfrak k,\tau)$ and 
\begin{align*}
\hat S(g)\ge\hat S(h_0)>\hat S(h)
\end{align*}
for all $h\in\mathcal M_T(\mathfrak k)\setminus\mathcal C(\mathfrak k,\tau)$. Thus, the global maximum of $\hat S$ on $\mathcal M_T(\mathfrak k)$ exists and is attained at $g$.
\end{proof}

The following result concludes our analysis of~$\hat S$.

\begin{lemma}\label{lem_ind_dim_subgr}
Assume that
\begin{align*}
\frac{\lambda_-(T|_{\mathfrak k''\ominus\mathfrak h})}{\tr_Q T|_{\mathfrak k'\ominus\mathfrak k''}}>\eta(\mathfrak k',\mathfrak k'')
\end{align*}
for every simple chain
\begin{align*}
\mathfrak g\supset\mathfrak k'\supset\mathfrak k''\supset\mathfrak h
\end{align*}
such that $\mathfrak k'\subset\mathfrak k$. Then the restriction of $\hat S$ to $\mathcal M_T(\mathfrak k)$ has a global maximum.
\end{lemma}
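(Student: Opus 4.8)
The plan is to argue by strong induction on $\dim\mathfrak k-\dim\mathfrak h\ge1$, ranging over all Lie subalgebras $\mathfrak k$ of $\mathfrak g$ that contain $\mathfrak h$ as a proper subset (each of which automatically satisfies Hypothesis~\ref{hyp_flag} and so fits the standing setup of Subsection~\ref{subsec_ex_glob_max}). The base case is when $\mathfrak h$ is a maximal Lie subalgebra of $\mathfrak k$, which in particular covers every $\mathfrak k$ with $\dim\mathfrak k=\dim\mathfrak h+1$; here Lemma~\ref{lem_1st_step} applies directly and produces a $g\in\mathcal M_T(\mathfrak k)$ with $\hat S(g)\ge\hat S(h)$ for all $h\in\mathcal M_T(\mathfrak k)$, so nothing more is needed.

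For the inductive step, assume $\mathfrak h$ is not maximal in $\mathfrak k$. Then there is a proper subalgebra of $\mathfrak k$ strictly containing $\mathfrak h$, and since by Corollary~\ref{cor_finite_subalg} there are only finitely many subalgebras between $\mathfrak h$ and $\mathfrak k$, one can pick one, say $\mathfrak s$, that is maximal with this property; a one-line argument shows that such an $\mathfrak s$ is in fact a maximal Lie subalgebra of $\mathfrak k$. Thus the set $\{\mathfrak k_1,\dots,\mathfrak k_r\}$ of all maximal Lie subalgebras of $\mathfrak k$ that contain $\mathfrak h$ as a proper subset is non-empty (and finite), so we are in the situation of Subsection~\ref{sec_finalise} with $r\ge1$. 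For each $i$ we have $\mathfrak h\subsetneq\mathfrak k_i\subsetneq\mathfrak k$, hence $\dim\mathfrak k_i-\dim\mathfrak h<\dim\mathfrak k-\dim\mathfrak h$, and $\mathfrak g\supset\mathfrak k\supset\mathfrak k_i\supset\mathfrak h$ is a simple chain. I would now verify the two hypotheses of Lemma~\ref{lem_step} and invoke that lemma.

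For hypothesis~1, fix $i$: any simple chain $\mathfrak g\supset\mathfrak k'\supset\mathfrak k''\supset\mathfrak h$ with $\mathfrak k'\subset\mathfrak k_i$ also satisfies $\mathfrak k'\subset\mathfrak k$, so the assumption of the present lemma gives $\lambda_-(T|_{\mathfrak k''\ominus\mathfrak h})/\tr_QT|_{\mathfrak k'\ominus\mathfrak k''}>\eta(\mathfrak k',\mathfrak k'')$ for every such chain; since $\dim\mathfrak k_i-\dim\mathfrak h$ is strictly smaller, the inductive hypothesis applies to $\mathfrak k_i$ and provides a global maximum of $\hat S$ on $\mathcal M_T(\mathfrak k_i)$. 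For hypothesis~2, apply the assumption of the present lemma to the simple chain $\mathfrak g\supset\mathfrak k\supset\mathfrak k_i\supset\mathfrak h$ (which trivially has $\mathfrak k\subset\mathfrak k$), taking $\mathfrak k'=\mathfrak k$ and $\mathfrak k''=\mathfrak k_i$; recalling $\mathfrak n_i=\mathfrak k_i\ominus\mathfrak h$ and $\mathfrak l_i=\mathfrak k\ominus\mathfrak k_i$, this reads exactly
\begin{align*}
\frac{\lambda_-(T|_{\mathfrak n_i})}{\tr_QT|_{\mathfrak l_i}}>\eta(\mathfrak k,\mathfrak k_i).
\end{align*}
With both hypotheses in hand, Lemma~\ref{lem_step} yields a global maximum of $\hat S$ on $\mathcal M_T(\mathfrak k)$, completing the induction.

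The heavy lifting has already been carried out in Lemmas~\ref{lem_1st_step}, \ref{lem_eps+max} and~\ref{lem_step}, so this argument is essentially organizational. The only point needing some care is the bookkeeping of the inequality hypothesis: it must be transferred both to the chains lying inside each $\mathfrak k_i$ (to feed the inductive hypothesis) and to the top chain $\mathfrak g\supset\mathfrak k\supset\mathfrak k_i\supset\mathfrak h$ (to feed hypothesis~2 of Lemma~\ref{lem_step}), but both transfers are immediate from $\mathfrak k_i\subset\mathfrak k$ and the maximality of $\mathfrak k_i$ in $\mathfrak k$, so I do not expect any genuine obstacle here.
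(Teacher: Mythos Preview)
Your proposal is correct and follows essentially the same approach as the paper's proof: both argue by induction on $\dim\mathfrak k\ominus\mathfrak h$, use Lemma~\ref{lem_1st_step} when $\mathfrak h$ is maximal in $\mathfrak k$, and otherwise invoke Lemma~\ref{lem_step} after the inductive hypothesis supplies global maxima on each $\mathcal M_T(\mathfrak k_i)$. Your version is a bit more explicit in spelling out how the inequality hypothesis transfers down to $\mathfrak k_i$ and across to hypothesis~2 of Lemma~\ref{lem_step}, whereas the paper leaves this bookkeeping implicit, but there is no substantive difference.
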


\begin{proof}
We proceed by induction. If $\dim\mathfrak k\ominus\mathfrak h$ equals~1, then $\mathfrak h$ must be a maximal Lie subalgebra of $\mathfrak k$. In this case, the existence of~$g\in\mathcal M_T(\mathfrak k)$ such that 
\begin{align}\label{def_glob_max}
\hat S(g)\ge\hat S(h),\qquad h\in\mathcal M_T(\mathfrak k),
\end{align}
follows from Lemma~\ref{lem_1st_step}. This is the basis of induction.

Fix $m=1,\ldots,n-1$, where $n$ is the dimension of~$M$. Suppose $\hat S$ has a global maximum on $\mathcal M_T(\mathfrak s)$ for every Lie subalgebra $\mathfrak s\subset\mathfrak g$ satisfying the formulas
\begin{align*}
\mathfrak h\subset\mathfrak s,\qquad 1\le\dim\mathfrak s\ominus\mathfrak h\le m.
\end{align*}
This is the induction hypothesis.

Let $\dim\mathfrak k\ominus\mathfrak h$ equal $m+1$. If $\mathfrak h$ is a maximal Lie subalgebra of $\mathfrak k$, then the existence of $g\in\mathcal M_T(\mathfrak k)$ satisfying~\eqref{def_glob_max} follows from Lemma~\ref{lem_1st_step}. Suppose $\mathfrak h$ is not. As in Subsection~\ref{sec_finalise}, denote by $\mathfrak k_1,\ldots,\mathfrak k_r$ the maximal Lie subalgebras of $\mathfrak k$ containing $\mathfrak h$ as a proper subset. It is clear that
\begin{align*}
1\le\dim\mathfrak k_i\ominus\mathfrak h\le m,\qquad i=1,\ldots,r.
\end{align*}
By the induction hypothesis, the restriction of $\hat S$ to $\mathfrak k_i$ has a global maximum for each~$i$. The existence of $g\in\mathcal M_T(\mathfrak k)$ satisfying~\eqref{def_glob_max} follows from this fact and Lemma~\ref{lem_step}.
\end{proof}

\begin{remark}\label{rem_milder}
Some of the results in Subsections~\ref{sec_lemmas}--\ref{subsec_ex_glob_max} hold under milder assumptions than those imposed above. In particular, Lemmas~\ref{lem_k=m}, \ref{lem_vanish_brak}, \ref{lem_sc_comp}--\ref{lem_eps+max} and~\ref{lem_1st_step} do not use requirement~2 of Hypothesis~\ref{hyp_flag}.
\end{remark}

\subsection{The completion of the proof of Theorem~\ref{thm_PRC}}\label{sec_final2}

Setting $\mathfrak k=\mathfrak g$ in Lemma~\ref{lem_ind_dim_subgr}, we conclude that the restriction of $\hat S$ to $\mathcal M_T$ has a global maximum. By definition, the maps $\hat S$ and $S$ coincide on $\mathcal M_T$. Ergo, there exists $g\in\mathcal M_T$ such that $S(g)\ge S(h)$ for all $h\in\mathcal M_T$. Lemma~\ref{lem_var} tells us that the Ricci curvature of $g$ equals $cT$ for some $c\in\mathbb R$. To complete the proof of Theorem~\ref{thm_PRC}, we need to show that $c>0$. 

By Bochner's theorem (see~\cite[Theorem~1.84]{AB87}), the space $M$ cannot support a $G$-invariant Riemannian metric with negative-definite Ricci curvature. It follows that $c\ge0$. Let us show that $M$ cannot support a Ricci-flat $G$-invariant metric. This will immediately imply that $c>0$.

We argue by contradiction. Assume there exists a Ricci-flat $G$-invariant metric on $M$. Employing Bochner's theorem again, we conclude that the isometry group of $M$ with respect to this metric must be abelian. It follows that
\begin{align*}
\gamma\gamma'\mu=\gamma'\gamma\mu,\qquad \gamma,\gamma'\in G,~\mu\in M.
\end{align*}
Replacing $\gamma'$ with $\chi\in H$ and choosing $\mu=\gamma^{-1}H$, we obtain
\begin{align*}
\gamma\chi\gamma^{-1}H=H,\qquad \gamma\in G,~\chi\in H.
\end{align*}
This formula implies
\begin{align*}
[\mathfrak m,\mathfrak h]\subset[\mathfrak g,\mathfrak h]\subset\mathfrak h.
\end{align*}
At the same time, $[\mathfrak m,\mathfrak h]$ is contained in $\mathfrak m$ because $\mathfrak m$ is $\Ad(H)$-invariant. Thus, $[\mathfrak m,\mathfrak h]$ is equal to $\{0\}$.

Let us turn our attention to the decomposition~\eqref{m_decomp}. Given $i=1,\ldots,s$, the representation $\Ad(H)|_{\mathfrak m_i}$ is trivial. Its irreducibility implies that $d_i=1$. In light of~\eqref{dimM}, this means~$s\ge3$. The space $\mathfrak m_1\oplus\mathfrak h$ is a Lie subalgebra of $\mathfrak g$ containing $\mathfrak h$ as a proper subset. Clearly,
\begin{align*}
\mathfrak m_1\subset\mathfrak m_1\oplus\mathfrak h,\qquad \mathfrak m_2\subset\mathfrak g\ominus(\mathfrak m_1\oplus\mathfrak h).
\end{align*}
Because the representations $\Ad(H)|_{\mathfrak m_1}$ and $\Ad(H)|_{\mathfrak m_2}$ are both trivial, they must be equivalent. However, this contradicts requirement~1 of Hypothesis~\ref{hyp_flag}.

\subsection{Two corollaries}\label{subsec_cor}

In this subsection, we state and prove two corollaries of Theorem~\ref{thm_PRC}. The first one offers an alternative to~\eqref{ineq_main_thm}.

\begin{corollary}
Suppose Hypothesis~\ref{hyp_flag} is satisfied for $M$. Given $T\in\mathcal M$, if
\begin{align*}
\frac{\lambda_-(T|_{\mathfrak n})}{\lambda_+(T|_{\mathfrak l})}>\eta(\mathfrak k,\mathfrak k')\dim\mathfrak l
\end{align*}
for every simple chain of the form~(\ref{flag}), then there exist $g\in\mathcal M_T$ such that $S(g)\ge S(h)$ for all $h\in\mathcal M_T$. The Ricci curvature of $g$ equals $cT$ with $c>0$.
\end{corollary}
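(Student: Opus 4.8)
The plan is to deduce this corollary directly from Theorem~\ref{thm_PRC} by showing that the displayed hypothesis implies inequality~\eqref{ineq_main_thm} for every simple chain of the form~\eqref{flag}. The only extra input needed is an elementary comparison between the trace of $T|_{\mathfrak l}$ and its largest eigenvalue.

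First I would fix a simple chain $\mathfrak g\supset\mathfrak k\supset\mathfrak k'\supset\mathfrak h$. By Definition~\ref{def_simple_chain} we have $\mathfrak k'\ne\mathfrak k$, so $\mathfrak l=\mathfrak k\ominus\mathfrak k'$ is a nonzero $\Ad(H)$-invariant subspace of $\mathfrak m$; in particular $\dim\mathfrak l\ge1$. Since $T\in\mathcal M$ is positive-definite, so is $T|_{\mathfrak l}$, whence $\lambda_+(T|_{\mathfrak l})>0$ and $\tr_QT|_{\mathfrak l}>0$. The trace $\tr_QT|_{\mathfrak l}$ is the sum of the $\dim\mathfrak l$ eigenvalues of the matrix of $T|_{\mathfrak l}$ in a $Q|_{\mathfrak l}$-orthonormal basis, and each such eigenvalue is at most $\lambda_+(T|_{\mathfrak l})$ by~\eqref{lambda_pm_def}. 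Hence
\begin{align*}
\tr_QT|_{\mathfrak l}\le\lambda_+(T|_{\mathfrak l})\dim\mathfrak l.
\end{align*}

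Combining this with the assumed inequality, I would then estimate
\begin{align*}
\frac{\lambda_-(T|_{\mathfrak n})}{\tr_QT|_{\mathfrak l}}\ge\frac{\lambda_-(T|_{\mathfrak n})}{\lambda_+(T|_{\mathfrak l})\dim\mathfrak l}>\eta(\mathfrak k,\mathfrak k'),
\end{align*}
so that~\eqref{ineq_main_thm} holds for the chosen chain. As the simple chain was arbitrary, Theorem~\ref{thm_PRC} applies and produces a metric $g\in\mathcal M_T$ with $S(g)\ge S(h)$ for all $h\in\mathcal M_T$ and with $\Ric g=cT$ for some $c>0$.

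I do not anticipate any real obstacle: the argument is a one-line monotonicity comparison feeding into the already-established Theorem~\ref{thm_PRC}. The only point deserving care is the non-vanishing of the denominators, which is precisely where the \emph{simplicity} of the chain (forcing $\mathfrak l\ne\{0\}$, hence $\dim\mathfrak l\ge1$) and the positive-definiteness of $T$ are used; the well-definedness and non-negativity of $\eta(\mathfrak k,\mathfrak k')$ on the right-hand side are already furnished by Lemma~\ref{lem_eta_wdpos} under Hypothesis~\ref{hyp_flag}.
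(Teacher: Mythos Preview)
Your argument is correct and is exactly the approach the paper takes: it invokes Theorem~\ref{thm_PRC} together with the elementary estimate $\tr_QT|_{\mathfrak l}\le\lambda_+(T|_{\mathfrak l})\dim\mathfrak l$. The only difference is that you spell out the justification for this estimate and the non-vanishing of the denominators, whereas the paper records it in a single line.
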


\begin{proof}
The corollary follows from Theorem~\ref{thm_PRC} and the obvious estimate
$\tr_QT|_{\mathfrak l}\le\lambda_+(T|_{\mathfrak l})\dim\mathfrak l$.
\end{proof}

Our next result underlies the discussion of Ricci iterations in Section~\ref{sec_iter}.

\begin{corollary}\label{cor_all}
Suppose the homogeneous space $M$ admits a decomposition of the form~(\ref{m_decomp}) such that the following requirements hold:
\begin{enumerate}
\item
The representation $\Ad(H)|_{\mathfrak m_i}$ is trivial if and only if $i=1$.
\item
The space $\mathfrak m_1\oplus\mathfrak h$ is the only proper Lie subalgebra of~$\mathfrak g$ containing $\mathfrak h$ as a proper subset.
\end{enumerate}
Given $T\in\mathcal M$, there exists $g\in\mathcal M_T$ such that $S(g)\ge S(h)$ for all $h\in\mathcal M_T$. The Ricci curvature of $g$ equals $cT$ for some $c>0$.
\end{corollary}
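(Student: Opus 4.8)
The plan is to deduce the corollary directly from Theorem~\ref{thm_PRC}. Fix the decomposition~\eqref{m_decomp} supplied by the hypotheses. Then $\Ad(H)|_{\mathfrak m_1}$ is trivial and irreducible, so $\dim\mathfrak m_1=1$, and requirement~2 of the corollary says that $\mathfrak s=\mathfrak m_1\oplus\mathfrak h$ is the unique Lie subalgebra of $\mathfrak g$ lying strictly between $\mathfrak h$ and $\mathfrak g$. The first task is to verify Hypothesis~\ref{hyp_flag}, which amounts to checking its two requirements for this single subalgebra $\mathfrak s$. For requirement~1, the only nonzero $\Ad(H)$-invariant subspace of $\mathfrak s\ominus\mathfrak h=\mathfrak m_1$ is $\mathfrak m_1$ itself, carrying the $1$-dimensional trivial representation; a nonzero $\Ad(H)$-invariant subspace $\mathfrak v\subset\mathfrak g\ominus\mathfrak s=\mathfrak m_2\oplus\cdots\oplus\mathfrak m_s$ equivalent to it would have to be $1$-dimensional with trivial action, but projecting such a $\mathfrak v$ onto each $\mathfrak m_i$ with $i\ge 2$ gives an $\Ad(H)$-equivariant map from a trivial representation to a nontrivial irreducible one, hence zero by Schur's lemma, forcing $\mathfrak v=\{0\}$, a contradiction. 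For requirement~2, suppose $\mathfrak r$ is an $\Ad(H)$-invariant $1$-dimensional subspace of $\mathfrak g\ominus\mathfrak s$ with $[\mathfrak r,\mathfrak s]=\{0\}$; by Remark~\ref{rem_1d_alg}, $\mathfrak r\oplus\mathfrak s$ is then a Lie subalgebra properly containing $\mathfrak h$, and it is proper in $\mathfrak g$ because $\dim(\mathfrak r\oplus\mathfrak s)=\dim\mathfrak h+2<\dim\mathfrak h+n=\dim\mathfrak g$ by~\eqref{dimM}; since $\mathfrak r\oplus\mathfrak s$ differs from $\mathfrak s$ on dimensional grounds, this contradicts requirement~2 of the corollary. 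Hence Hypothesis~\ref{hyp_flag} holds.

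Next I would enumerate the simple chains~\eqref{flag}. The only Lie subalgebras of $\mathfrak g$ containing $\mathfrak h$ are $\mathfrak h$, $\mathfrak s$ and $\mathfrak g$, and Definition~\ref{def_simple_chain} requires $\mathfrak h\ne\mathfrak k'$ with $\mathfrak k'$ maximal in $\mathfrak k\ne\mathfrak k'$; this forces $\mathfrak k=\mathfrak g$ and $\mathfrak k'=\mathfrak s$, and indeed $\mathfrak s$ is maximal in $\mathfrak g$ since nothing lies strictly between them. For this unique chain, $\mathfrak n=\mathfrak k'\ominus\mathfrak h=\mathfrak m_1$ and $\mathfrak l=\mathfrak k\ominus\mathfrak k'=\mathfrak m_2\oplus\cdots\oplus\mathfrak m_s$, which is nonzero by~\eqref{dimM}. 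It remains to check~\eqref{ineq_main_thm}. Because $\dim\mathfrak m_1=1$ and, by the bi-invariance of $Q$ (exactly as in the proof of Proposition~\ref{lem_eta0}), $[\mathfrak m_1,\mathfrak h]=\{0\}$, the subalgebra $\mathfrak k'=\mathfrak m_1\oplus\mathfrak h$ is isomorphic to the direct sum of $\mathbb R$ and $\mathfrak h$; Proposition~\ref{lem_eta0} therefore gives $\eta(\mathfrak k,\mathfrak k')=0$. On the other hand, since $T$ is positive-definite, $\lambda_-(T|_{\mathfrak n})>0$ and $\tr_Q T|_{\mathfrak l}>0$, so $\lambda_-(T|_{\mathfrak n})/\tr_Q T|_{\mathfrak l}>0=\eta(\mathfrak k,\mathfrak k')$. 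Thus~\eqref{ineq_main_thm} holds for the unique simple chain, and Theorem~\ref{thm_PRC} produces a global maximiser $g\in\mathcal M_T$ of $S$ together with a constant $c>0$ such that $\Ric g=cT$.

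The argument involves no hard analysis, since all the substantive work is already contained in Theorem~\ref{thm_PRC}. The part demanding the most care is the verification of Hypothesis~\ref{hyp_flag}: ruling out an equivalence between the trivial summand $\mathfrak m_1$ and a subspace of $\mathfrak m_2\oplus\cdots\oplus\mathfrak m_s$ in requirement~1 via Schur's lemma, and the dimension bookkeeping in requirement~2, where the assumption $\dim M\ge3$ from~\eqref{dimM} is essential to prevent $\mathfrak r\oplus\mathfrak s$ from exhausting $\mathfrak g$. Once Hypothesis~\ref{hyp_flag} and the single inequality are established, the conclusion is immediate.
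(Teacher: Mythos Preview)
Your proof is correct and follows essentially the same approach as the paper's: verify Hypothesis~\ref{hyp_flag} (the paper just says ``Recalling~\eqref{dimM} and Remark~\ref{rem_1d_alg}, one easily verifies\ldots''), identify the unique simple chain $\mathfrak g\supset\mathfrak g\supset\mathfrak m_1\oplus\mathfrak h\supset\mathfrak h$, invoke Proposition~\ref{lem_eta0} to get $\eta(\mathfrak g,\mathfrak m_1\oplus\mathfrak h)=0$, and apply Theorem~\ref{thm_PRC}. Your version simply fills in the details the paper leaves to the reader; one could even shorten your verification of requirement~2 by noting that your requirement~1 argument already rules out any $1$-dimensional $\Ad(H)$-invariant subspace of $\mathfrak g\ominus\mathfrak s$, making requirement~2 vacuous.
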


\begin{proof}
Recalling~\eqref{dimM} and Remark~\ref{rem_1d_alg}, one easily verifies that Hypothesis~\ref{hyp_flag} holds for $M$. Moreover,
\begin{align}\label{simpl_chain_always}
\mathfrak g\supset\mathfrak g\supset\mathfrak m_1\oplus\mathfrak h\supset\mathfrak h
\end{align}
is the only simple chain associated with~$M$. Proposition~\ref{lem_eta0} implies that $\eta(\mathfrak g,\mathfrak m_1\oplus\mathfrak h)=0$. Thus, inequality~\eqref{ineq_main_thm} is necessarily satisfied for~\eqref{simpl_chain_always}. In light of these observations, Theorem~\ref{thm_PRC} yields the result.
\end{proof}

\begin{remark}
The triviality and the irreducibility of $\Ad(H)|_{\mathfrak m_1}$ imply that the dimension $d_1$ of the space~$\mathfrak m_1$ in Corollary~\ref{cor_all} equals~1.
\end{remark}

\begin{remark}
Let Hypothesis~\ref{hyp_flag} be satisfied. Assume~\eqref{ineq_main_thm} holds for every $T\in\mathcal M$ for every simple chain associated with~$M$. Then one can show that $M$ admits a decomposition of the form~\eqref{m_decomp} that meets requirements~1 and~2 of Corollary~\ref{cor_all}. The argument is similar in spirit to the proof of Proposition~\ref{lem_eta0}. We leave the details to the reader.
\end{remark}

Corollary~\ref{cor_all} applies if $M$ coincides with, for instance, $SO(2k)/SU(k)$ for $k\ge3$, $SU(k+l)/SU(k)\times SU(l)$ for $k,l\ge2$, $Sp(k)/SU(k)$ for $k\ge3$ or~$E_7/E_6$ (the corresponding embeddings are given in~\cite[Examples~I.24, II.7, III.8 and~IV.19]{WDMK08}). In all these cases, $M$ has two inequivalent irreducible summands in its isotropy representation. Thus, the existence of~$g\in\mathcal M$ with Ricci curvature $cT$ for some $c>0$ also follows from~\cite[Proposition~3.1]{AP16}. The authors were unable to find examples of $M$ that would satisfy the assumptions of Corollary~\ref{cor_all} and have three or more irreducible summands in their isotropy representations. We hope that such examples will emerge in the future.

\section{The case of two inequivalent irreducible summands}\label{sec_2sum}

Theorem~\ref{thm_PRC} provides a sufficient condition for the existence of a metric $g\in\mathcal M$ whose Ricci curvature equals $cT$ with $c>0$. We will show that this condition is necessary when the isotropy representation of $M$ splits into two inequivalent irreducible summands. Our argument will rely on~\cite[Proposition~3.1]{AP16}.

Suppose $s=2$ in every decomposition of the form~\eqref{m_decomp}, i.e.,
\begin{align*}
\mathfrak m=\mathfrak m_1\oplus\mathfrak m_2.
\end{align*}
Let $\Ad(H)|_{\mathfrak m_1}$ and $\Ad(H)|_{\mathfrak m_2}$ be inequivalent. According to Theorem~\ref{thm_PRC}, finding a metric whose Ricci curvature equals $cT$ for some $c>0$ is always possible if $\mathfrak h$ is maximal in~$\mathfrak g$. Thus, we may assume that there exists a Lie subalgebra $\mathfrak s\subset\mathfrak g$ such that
\begin{align*}
\mathfrak g\supset\mathfrak s\supset\mathfrak h,\qquad \mathfrak h\ne\mathfrak s,\qquad \mathfrak s\ne\mathfrak g.
\end{align*}
It is clear that $\mathfrak s\ominus\mathfrak h$ is a proper $\Ad(H)$-invariant subspace of~$\mathfrak m$. The only such subspaces are $\mathfrak m_1$ and~$\mathfrak m_2$. Therefore, $\mathfrak s$ must equal $\mathfrak m_1\oplus\mathfrak h$ or $\mathfrak m_2\oplus\mathfrak h$. Suppose
\begin{align}\label{eq_k1_wlog}
\mathfrak s=\mathfrak m_1\oplus\mathfrak h.
\end{align}
If $\mathfrak m_2\oplus\mathfrak h$ is a Lie subalgebra of $\mathfrak g$, then $[112]=[221]=0$. In this case, all the metrics in $\mathcal M$ have the same Ricci curvature, and the problem of solving equation~\eqref{eq_PRC} becomes trivial; see, e.g.,~\cite[Subsection~4.2]{APYRsubm}. Thus, we may assume $\mathfrak m_2\oplus\mathfrak h$ is not closed under the Lie bracket. This implies $[221]>0$.

Let us verify Hypothesis~\ref{hyp_flag}. It is clear that $\mathfrak s$ given by~\eqref{eq_k1_wlog} is the unique proper Lie subalgebra of $\mathfrak g$ such that $\mathfrak h\subset\mathfrak s$ and $\mathfrak h\ne\mathfrak s$. The only nonzero $\Ad(H)$-invariant subspace of~$\mathfrak s\ominus\mathfrak h$ is $\mathfrak m_1$, and the only such subspace of~$\mathfrak g\ominus\mathfrak s$ is $\mathfrak m_2$. Since $\Ad(H)|_{\mathfrak m_1}$ and $\Ad(H)|_{\mathfrak m_2}$ are inequivalent, $\mathfrak s$ meets requirement~1 of Hypothesis~\ref{hyp_flag}. If
\begin{align*}
[\mathfrak m_2,\mathfrak s]=\{0\},
\end{align*}
then $[112]=[221]=0$, which contradicts the formula~$[221]>0$. Thus, $\mathfrak s$ meets requirement~2 of Hypothesis~\ref{hyp_flag}.

It is easy to see that
\begin{align*}
\mathfrak g\supset\mathfrak g\supset\mathfrak s\supset\mathfrak h
\end{align*}
is the only simple chain associated with~$M$. Setting $\mathfrak k=\mathfrak g$ and $\mathfrak k'=\mathfrak s$ in~\eqref{flag}, we obtain
\begin{align*}
\mathfrak j=\{0\},\qquad \mathfrak j'=\mathfrak l=\mathfrak m_2,\qquad \mathfrak n=\mathfrak m_1.
\end{align*}
Given $T\in\mathcal M$, the equality
\begin{align*}
T=z_1\pi_{\mathfrak m_1}^*Q+z_2\pi_{\mathfrak m_2}^*Q
\end{align*}
holds for some $z_1,z_2>0$. 
It is obvious that 
\begin{align*}
\lambda_-(T|_{\mathfrak n})=z_1,\qquad \tr_Q T|_{\mathfrak l}=d_2z_2.
\end{align*}
A straightforward computation involving~\eqref{trace_bd} and~\eqref{Casimir} yields
\begin{align*}
\eta(\mathfrak g,\mathfrak s)&=\frac{2\tr_QB|_{\mathfrak m_1}+2\langle\mathfrak m_1\mathfrak m_2\mathfrak m_2\rangle+\langle\mathfrak m_1\mathfrak m_1\mathfrak m_1\rangle}{\omega(\mathfrak m_1)(2\tr_QB|_{\mathfrak m_2}+\langle\mathfrak m_2\mathfrak m_2\mathfrak m_2\rangle)} \\ &=\frac{2d_1b_1-2[122]-[111]}{d_1(2d_2b_2-[222])}
=\frac{4d_1\zeta_1+[111]}{d_1(4d_2\zeta_2+[222]+4[122])}.
\end{align*}
Theorem~\ref{thm_PRC} asserts that it is possible to find a metric $g\in\mathcal M$ whose Ricci curvature equals $cT$ for some $c>0$ if
\begin{align}
z_1/z_2>d_2\eta(\mathfrak g,\mathfrak s)=\frac{d_2(4d_1\zeta_1+[111])}{d_1(4d_2\zeta_2+[222]+4[122])}.
\end{align}
According to~\cite[Proposition~3.1]{AP16}, this condition is, in fact, sufficient \emph{and} necessary for the existence of~$g$.

\section{Generalised flag manifolds}\label{sec_flag}

In this section, we discuss the case where $M$ is a generalised flag manifold. Our first objective is to verify Hypothesis~\ref{hyp_flag}. After that, we will consider a class of examples to illustrate the use of Theorem~\ref{thm_PRC}. For the definition and some properties of a generalised flag manifold, see, e.g.,~\cite[Chapter~7]{AA03}. We will also rely on the classification results obtained in~\cite{MK90,SAIC11} and collected in~\cite{SAIC11}.

\begin{proposition}\label{prop_g_flag_mfs}
Suppose $M$ is a generalised flag manifold. Then $M$ satisfies Hypothesis~\ref{hyp_flag}.
\end{proposition}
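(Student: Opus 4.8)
The plan is to verify the two requirements of Hypothesis~\ref{hyp_flag} using the single structural feature that makes generalised flag manifolds tractable here: $M=G/H$ arises from the centraliser $H$ of a torus in $G$, so $H$ has maximal rank, and after fixing a maximal torus $T\subset H$ with Lie algebra $\mathfrak t\subset\mathfrak h$, the space $\mathfrak g\ominus\mathfrak t$ is the $Q$-orthogonal sum of the real root planes of $(\mathfrak g,\mathfrak t)$, on which $\Ad(T)$ acts with no nonzero fixed vector (the fixed subspace of $\Ad(T)$ in $\mathfrak g$ being exactly $\mathfrak t$).

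First I would dispatch requirement~2, which turns out to be vacuous for every admissible $\mathfrak s$. Let $\mathfrak s$ be a Lie subalgebra with $\mathfrak h\subset\mathfrak s$ and $\mathfrak h\ne\mathfrak s$; one may assume $\mathfrak s\ne\mathfrak g$, since otherwise $\mathfrak g\ominus\mathfrak s=\{0\}$. As $\mathfrak t\subset\mathfrak h\subset\mathfrak s$, we have $\mathfrak g\ominus\mathfrak s\subseteq\mathfrak g\ominus\mathfrak t$, and I claim there is no $\Ad(H)$-invariant $1$-dimensional subspace $\mathfrak r\subset\mathfrak g\ominus\mathfrak s$. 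Indeed, if $\mathfrak r=\mathbb R X$ with $Q(X,X)=1$ were $\Ad(H)$-invariant, then $\Ad(h)X=c(h)X$ with $c(h)\in\mathbb R$; since $Q$ is $\Ad(G)$-invariant, $c(h)^2=1$, and since $H$ is connected the continuous function $c$ is identically~$1$, so $X$ is fixed by $\Ad(H)$, in particular by $\Ad(T)$. But then $X\in\mathfrak t\subset\mathfrak s$, contradicting $X\in\mathfrak g\ominus\mathfrak s$. Thus requirement~2 holds for the reason anticipated in the remark preceding the statement: $\mathfrak g\ominus\mathfrak s$ simply has no invariant lines.

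The substance of the proof is requirement~1, and here I would bring in the \emph{t-root} description of the isotropy representation to show it splits into pairwise \emph{inequivalent} irreducible summands. Writing $\mathfrak z$ for the centre of $\mathfrak h$, one has $\mathfrak m^{\mathbb C}=\bigoplus_\xi\mathfrak m_\xi$, where $\xi$ ranges over the nonzero restrictions to $\mathfrak z$ of the roots of $(\mathfrak g,\mathfrak t)$ and $\mathfrak m_\xi$ is the sum of the root spaces whose roots restrict to $\xi$; the real irreducible $\Ad(H)$-summands of $\mathfrak m$ are the real forms of the spaces $\mathfrak m_\xi\oplus\mathfrak m_{-\xi}$, and two of these are inequivalent whenever $\{\xi,-\xi\}\ne\{\xi',-\xi'\}$, because $\mathfrak z$ already acts on them through different characters. (This is classical; see~\cite[Chapter~7]{AA03}.) Granting multiplicity-freeness, fix a decomposition $\mathfrak m=\mathfrak m_1\oplus\cdots\oplus\mathfrak m_s$ as in~\eqref{m_decomp}; by Schur's lemma it is unique up to order and every $\Ad(H)$-invariant subspace of $\mathfrak m$ is a sum of a subset of the $\mathfrak m_i$ — precisely the bookkeeping carried out in the proof of Lemma~\ref{lem_k=m}. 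Consequently, for $\mathfrak s$ as above one has $\mathfrak s\ominus\mathfrak h=\bigoplus_{i\in A}\mathfrak m_i$ and $\mathfrak g\ominus\mathfrak s=\bigoplus_{i\in B}\mathfrak m_i$ with $A\cap B=\emptyset$; any nonzero $\Ad(H)$-invariant $\mathfrak u\subset\mathfrak s\ominus\mathfrak h$ and $\mathfrak v\subset\mathfrak g\ominus\mathfrak s$ are then $\bigoplus_{i\in A'}\mathfrak m_i$ and $\bigoplus_{i\in B'}\mathfrak m_i$ with $\emptyset\ne A'\subseteq A$ and $\emptyset\ne B'\subseteq B$, and if $\Ad(H)|_{\mathfrak u}$ and $\Ad(H)|_{\mathfrak v}$ were equivalent, a comparison of irreducible constituents would force $A'=B'$, which is impossible. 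Hence requirement~1 holds, and the proposition follows.

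The step I expect to require the most care is the multiplicity-freeness of the isotropy representation: one must use that the summands are indexed by t-roots rather than by roots, and that it is the centre $\mathfrak z$ of $\mathfrak h$ that separates inequivalent summands. If a reference of exactly the shape needed is awkward to locate, the alternative is a brief root-theoretic verification along the lines just sketched. Once that input is in place, the rest is a direct application of Schur's lemma and the maximal-rank property of $\mathfrak h$, essentially duplicating arguments already used for Lemma~\ref{lem_k=m}.
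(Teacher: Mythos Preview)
Your proof is correct and follows the same strategy as the paper's: both verify requirement~1 via multiplicity-freeness of the isotropy representation (you justify it through t-roots, the paper by citation to \cite[Chapter~7, Section~5]{AA03}) and then Schur's lemma, and both dispatch requirement~2 by showing that $\mathfrak m$ contains no $\Ad(H)$-invariant lines. The only difference is tactical, in requirement~2: the paper observes that each $\mathfrak m_i$ has even real dimension (since $\mathfrak m_i^{\mathbb C}=\mathfrak m_\xi\oplus\mathfrak m_{-\xi}$), whereas you argue that an invariant line would be pointwise $\Ad(T)$-fixed and hence lie in $\mathfrak t\subset\mathfrak h$---both are short and valid, yours being slightly more self-contained.
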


\begin{proof}
Choose a decomposition of the form~\eqref{m_decomp}. Since $M$ is a generalised flag manifold, the representations $\Ad(H)|_{\mathfrak m_i}$ and $\Ad(H)|_{\mathfrak m_j}$ are inequivalent whenever~$i\ne j$; see, e.g.,~\cite[Chapter~7, Section~5]{AA03}. The summands $\mathfrak m_1,\ldots,\mathfrak m_s$ are determined uniquely up to order. Consequently, every nonzero $\Ad(H)$-invariant subspace of $\mathfrak m$ is the direct sum of $\mathfrak m_i$ with the index $i$ running through some non-empty subset of $\{1,\ldots,s\}$.

Let us verify Hypothesis~\ref{hyp_flag}. Consider a Lie subalgebra $\mathfrak s\subset\mathfrak g$ containing $\mathfrak h$ as a proper subset. It is obvious that $\mathfrak s\ominus\mathfrak h$ is $\Ad(H)$-invariant.
Therefore, for some $J_{\mathfrak s}\subset\{1,\ldots,s\}$,
\begin{align*}
\mathfrak s\ominus\mathfrak h=\bigoplus_{i\in J_{\mathfrak s}}\mathfrak m_i,\qquad \mathfrak g\ominus\mathfrak s=\bigoplus_{i\in\{1,\ldots,s\}\setminus J_{\mathfrak s}}\mathfrak m_i.
\end{align*}
As we noted above, $\Ad(H)|_{\mathfrak m_i}$ and $\Ad(H)|_{\mathfrak m_j}$ are inequivalent for~$i\ne j$. It follows that $\mathfrak s$ meets requirement~1 of Hypothesis~\ref{hyp_flag}.

As explained in~\cite[Chapter~7, Section~5]{AA03}, for every $i=1,\ldots,s$, the complexification of $\mathfrak m_i$ is the sum of two complex vector spaces of the same dimension. Consequently, $d_i$ is even. We conclude that $\mathfrak m$ does not have any $\Ad(H)$-invariant 1-dimensional subspaces. This means $\mathfrak s$ meets requirement~2 of Hypothesis~\ref{hyp_flag}.
\end{proof}

Let $M$ be a generalised flag manifold. Assume that $s=3$ in every decomposition of the form~\eqref{m_decomp} and that $M$ is of type~I in the terminology of~\cite{SAIC11}. Our next goal is to write down explicit formulas for the numbers $\eta(\mathfrak k,\mathfrak k')$ associated with simple chains of the form~\eqref{flag}. This will lead up to the application of Theorem~\ref{thm_PRC}. Analogous reasoning works if $M$ is of type~II in the terminology of~\cite{SAIC11} or if the isotropy representation of $M$ splits into four or five irreducible summands. We provide further details in Remark~\ref{rem_str_con_345} below.

Consider a decomposition
\begin{align*}
\mathfrak m=\mathfrak m_1\oplus\mathfrak m_2\oplus\mathfrak m_3
\end{align*}
of the form~\eqref{m_decomp}. It will be convenient for us to assume that this decomposition is the same as in~\cite[Subsection~2.4]{SAIC11}. The definition of a generalised flag manifold requires the group $G$ to be semisimple. This enables us to set $Q=-B$. According to~\cite[Formulas~(11), (13) and~(15)]{SAIC11},
\begin{align}\label{struc_const_flag3}
[112]&=[121]=[211]=\frac{d_1d_2+2d_1d_3-d_2d_3}{d_1+4d_2+9d_3}, \notag \\
[123]&=[231]=[312]=[321]=[213]=[132]=\frac{(d_1+d_2)d_3}{d_1+4d_2+9d_3},
\end{align}
and the rest of the structure constants are~0. The dimensions $d_1,d_2,d_3$ for concrete spaces are listed in~\cite[Table~4]{SAIC11}.

\begin{remark}\label{rem_str_con_345}
The reader will find the structure constants of generalised flag manifolds with two irreducible isotropy summands in~\cite{AAIC11,AA15}, three summands in~\cite{MK90,SAIC11}, four summands in~\cite{AAIC10} and five summands in~\cite{AAICYS13}.
\end{remark}

As we mentioned in the proof of Proposition~\ref{prop_g_flag_mfs}, the representations $\Ad(H)|_{\mathfrak m_i}$ and $\Ad(H)|_{\mathfrak m_j}$ are inequivalent for~$i\ne j$. Consequently, every nonzero $\Ad(H)$-invariant subspace of $\mathfrak g\ominus\mathfrak h$ is the direct sum of some of the spaces $\mathfrak m_1$, $\mathfrak m_2$ and $\mathfrak m_3$. This fact and formulas~\eqref{struc_const_flag3} imply that the proper Lie subalgebras of $\mathfrak g$ containing $\mathfrak h$ as a proper subset are 
\begin{align*}
\mathfrak s_1=\mathfrak m_2\oplus\mathfrak h,\qquad \mathfrak s_2=\mathfrak m_3\oplus\mathfrak h.
\end{align*}
It follows that the simple chains associated with $M$ are
\begin{align*}
\mathfrak g\supset\mathfrak g\supset\mathfrak s_1\supset\mathfrak h,\qquad \mathfrak g\supset\mathfrak g\supset\mathfrak s_2\supset\mathfrak h.
\end{align*}
Given $T\in\mathcal M$, the equality
\begin{align*}
T=-z_1\pi_{\mathfrak m_1}^*B-z_2\pi_{\mathfrak m_2}^*B-z_3\pi_{\mathfrak m_3}^*B
\end{align*}
holds for some $z_1,z_2,z_3>0$. Setting $\mathfrak k=\mathfrak g$ and $\mathfrak k'=\mathfrak s_i$ in~\eqref{flag}, we obtain
\begin{align*}
\mathfrak j=\{0\},\qquad \mathfrak j'=\mathfrak l&=\mathfrak m_1\oplus\mathfrak m_{4-i},\qquad \mathfrak n=\mathfrak m_{1+i}, \\
\lambda_-(T|_{\mathfrak n})&=z_{1+i},\qquad \tr_{-B} T|_{\mathfrak l}=d_1z_1+d_{4-i}z_{4-i},\qquad i=1,2.
\end{align*}
A computation involving~\eqref{trace_bd}, \eqref{<ijk>[ijk]} and~\eqref{struc_const_flag3} yields
\begin{align*}
\eta(\mathfrak g,\mathfrak s_1)&=\frac{2\tr_{-B}B|_{\mathfrak m_2}+2(\langle\mathfrak m_2\mathfrak m_1\mathfrak m_1\rangle+\langle\mathfrak m_2\mathfrak m_3\mathfrak m_3\rangle+2\langle\mathfrak m_2\mathfrak m_1\mathfrak m_3\rangle)+\langle\mathfrak m_2\mathfrak m_2\mathfrak m_2\rangle}{\omega(\mathfrak m_2)(2\tr_{-B}B|_{\mathfrak m_1\oplus\mathfrak m_3}+\langle\mathfrak m_1\mathfrak m_1\mathfrak m_1\rangle+\langle\mathfrak m_3\mathfrak m_3\mathfrak m_3\rangle+3\langle\mathfrak m_1\mathfrak m_1\mathfrak m_3\rangle+3\langle\mathfrak m_1\mathfrak m_3\mathfrak m_3\rangle)}
\\
&=\frac{-d_2+[112]+2[123]}{d_2(-d_1-d_3)}
=\frac{-4d_2^2-8d_2d_3+4d_1d_3}{-d_2(d_1+d_3)(d_1+4d_2+9d_3)},
\\
\eta(\mathfrak g,\mathfrak s_2)&=\frac{2\tr_{-B}B|_{\mathfrak m_3}+2(\langle\mathfrak m_3\mathfrak m_1\mathfrak m_1\rangle+\langle\mathfrak m_3\mathfrak m_2\mathfrak m_2\rangle+2\langle\mathfrak m_3\mathfrak m_1\mathfrak m_2\rangle)+\langle\mathfrak m_3\mathfrak m_3\mathfrak m_3\rangle}{\omega(\mathfrak m_3)(2\tr_{-B}B|_{\mathfrak m_1\oplus\mathfrak m_2}+\langle\mathfrak m_1\mathfrak m_1\mathfrak m_1\rangle+\langle\mathfrak m_2\mathfrak m_2\mathfrak m_2\rangle+3\langle\mathfrak m_1\mathfrak m_1\mathfrak m_2\rangle+3\langle\mathfrak m_1\mathfrak m_2\mathfrak m_2\rangle)}
\\ 
&=\frac{-2d_3+4[123]}{d_3(-2d_1-2d_2+3[112])} 
=\frac{-2d_1+4d_2+18d_3}{2d_1^2+8d_2^2+7d_1d_2+12d_1d_3+21d_2d_3}.
\end{align*}
Theorem~\ref{thm_PRC} tells us that a Riemannian metric with Ricci curvature equal to $cT$ for some $c>0$ exists if 
\begin{align*}
\frac{z_2}{d_1z_1+d_3z_3}&>\frac{-4d_2^2-8d_2d_3+4d_1d_3}{-d_2(d_1+d_3)(d_1+4d_2+9d_3)},
\\
\frac{z_3}{d_1z_1+d_2z_2}&>\frac{-2d_1+4d_2+18d_3}{2d_1^2+8d_2^2+7d_1d_2+12d_1d_3+21d_2d_3}.
\end{align*}

\begin{example}
Suppose $M$ is the generalised flag manifold $G_2/U(2)$ in which $U(2)$ corresponds to the long root of~$G_2$. According to~\cite[Table~4]{SAIC11}, in this case, $d_1=d_3=4$ and $d_2=2$. Theorem~\ref{thm_PRC} implies that a Riemannian metric with Ricci curvature equal to $cT$ for some $c>0$ exists if 
\begin{align*}
\frac{z_2}{z_1+z_3}>\frac1{12},\qquad
\frac{z_3}{2z_1+z_2}>\frac3{10}.
\end{align*}
\end{example}

\section{Ricci iterations}\label{sec_iter}

Corollary~\ref{cor_all} yields a new existence result for Ricci iterations on homogeneous spaces. More precisely, the following proposition holds. For earlier work on the subject, see~\cite{APYRsubm}.

\begin{proposition}\label{thm_iter}
Suppose the homogeneous space $M$ admits a decomposition of the form~(\ref{m_decomp}) satisfying requirements~1 and~2 of Corollary~\ref{cor_all}. Given a metric $\bar g_1\in\mathcal M$, there exists a sequence $(g_i)_{i=1}^\infty\subset\mathcal M$ such that the formulas $g_1=c_1\bar g_1$ and~(\ref{eq_iter}) hold for some $c_1>0$ and all~$i\in\mathbb N\setminus\{1\}$.
\end{proposition}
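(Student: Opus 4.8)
The plan is to obtain the iteration as an essentially immediate consequence of Corollary~\ref{cor_all}, the only real work being a scaling bookkeeping that accounts for the positive constant $c$ appearing there. Throughout I will use the elementary fact that the Ricci curvature of a Riemannian metric is unchanged when the metric is multiplied by a positive constant.

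The first step is to restate Corollary~\ref{cor_all} in the form convenient here: since, by hypothesis, $M$ admits a decomposition of the form~\eqref{m_decomp} meeting requirements~1 and~2 of that corollary, for every $h\in\mathcal M$ there exist $g\in\mathcal M$ and $c>0$ such that $\Ric g=ch$. Indeed, applying Corollary~\ref{cor_all} with $T=h$ produces $g\in\mathcal M_h\subset\mathcal M$ whose Ricci curvature equals $ch$ for some $c>0$.

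Next I would construct, by induction, a sequence $(g^{(i)})_{i=1}^\infty\subset\mathcal M$ together with positive numbers $(c_i)_{i=1}^\infty$: put $g^{(1)}=\bar g_1$, and, once $g^{(i)}$ has been defined, use the statement from the previous paragraph with $h=g^{(i)}$ to choose $g^{(i+1)}\in\mathcal M$ and $c_i>0$ with $\Ric g^{(i+1)}=c_ig^{(i)}$. Finally, set $g_i=c_ig^{(i)}$ for all $i\in\mathbb N$. Then $g_1=c_1\bar g_1$, every $g_i$ lies in $\mathcal M$, and for each $i\ge2$ the scaling invariance of the Ricci curvature yields
\begin{align*}
\Ric g_i=\Ric\big(c_ig^{(i)}\big)=\Ric g^{(i)}=c_{i-1}g^{(i-1)}=g_{i-1},
\end{align*}
which is precisely~\eqref{eq_iter}.

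I do not expect a genuine obstacle here; the argument is short. The single point that needs care — and the reason the statement is phrased with an undetermined $c_1$ — is that Corollary~\ref{cor_all} controls the Ricci curvature of the maximiser only up to the positive factor $c$, which in general differs from $1$, so one cannot simply take $g_1=\bar g_1$; the rescalings $g_i=c_ig^{(i)}$ are exactly what reconcile these factors along the whole sequence.
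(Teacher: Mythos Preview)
Your proof is correct and follows essentially the same route as the paper's: both construct an auxiliary sequence (your $g^{(i)}$, the paper's $\bar g_i$) by repeated application of Corollary~\ref{cor_all}, then set $g_i=c_i g^{(i)}$ and use the scale-invariance of Ricci curvature to verify~\eqref{eq_iter}. The only difference is cosmetic---you make the scaling step explicit, whereas the paper leaves it as ``obvious''.
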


\begin{proof}
We construct $(g_i)_{i=1}^\infty$ inductively. An application of Corollary~\ref{cor_all} with $T=\bar g_1$ yields the existence of $\bar g_2\in\mathcal M$ such that 
\begin{align*}
\Ric\bar g_2=c_1\bar g_1
\end{align*} 
for some $c_1>0$. We set $g_1=c_1\bar g_1$. One more application of Corollary~\ref{cor_all}, this time with $T=\bar g_2$, produces $\bar g_3\in\mathcal M$ such that 
\begin{align*}
\Ric\bar g_3=c_2\bar g_2
\end{align*} 
for some $c_2>0$. We set $g_2=c_2\bar g_2$. It is obvious that $\Ric g_2$ coincides with $g_1$. Continuing in this way, we obtain $(g_i)_{i=1}^\infty$.
\end{proof}

We discussed several examples of $M$ satisfying the assumptions of Proposition~\ref{thm_iter} in the end of Subsection~\ref{subsec_cor}. For a detailed description of the behaviour of Ricci iterations on homogeneous spaces with two inequivalent irreducible isotropy summands, see~\cite[Theorem~2.1]{APYRsubm}.

\section*{Acknowledgements}

We express our gratitude to Ole Warnaar for helpful discussions.

\end{document}